\documentclass[reqno,11pt]{amsart}
\usepackage{amssymb}
\usepackage{verbatim}
\usepackage[all,cmtip]{xy}
\newif\ifpdf
\ifpdf
  \usepackage[pdftex]{graphicx}
  \usepackage[pdftex]{hyperref}
\else
  \usepackage{graphicx}
\fi

%
\textwidth=135mm
\textheight=210mm

\numberwithin{equation}{section}       
\setcounter{secnumdepth}{4}

 \theoremstyle{plain}    
 \newtheorem{thm}{Theorem}[section]
 \numberwithin{equation}{section} 
 \numberwithin{figure}{section} 
 \theoremstyle{plain}
 \theoremstyle{plain}    
 \theoremstyle{plain}    
 \newtheorem{pro}[thm]{Proposition} 
 \theoremstyle{plain}    
 \newtheorem{lem}[thm]{Lemma} 
 \theoremstyle{remark}
 \newtheorem{rem}[thm]{Remark}
 \theoremstyle{definition}
\newtheorem{ex}[thm]{Example}

\newtheorem*{thmA}{Theorem A} 
\newtheorem*{thmB}{Theorem B} 
 
\newtheorem*{thmD}{Theorem D}

\newtheorem*{proC}{Proposition C}

\theoremstyle{definition}
\newtheorem{defi}[thm]{Definition}

\newtheorem*{ackn}{Acknowledgement}

\newcommand{\R}{{\mathbb{R}}}

\newcommand{\cE}{{\mathcal{E}}}

\newcommand{\cZ}{{\mathcal{Z}}}

\renewcommand{\a}{\alpha}
\renewcommand{\b}{\beta}

\newcommand{\e}{\varepsilon}
\newcommand{\f}{\varphi}

\newcommand{\MA}{\mathrm{MA}\,}
\newcommand{\Amp}{\mathrm{Amp}\,}

\newcommand{\vol}{\operatorname{vol}}

\newcommand{\supp}{\operatorname{supp}}

\newcommand{\ax}{\alpha_{X}}
\newcommand{\ay}{\alpha_{Y}}

\newcommand{\pistar}{\pi^{\star}}
\newcommand{\pistarb}{\pi_{\star}}
\newcommand{\te}{\theta}
%
%
%

\begin{document}

\setcounter{tocdepth}{1}

\title{Stability of Monge-Amp{\`e}re energy classes}

\date{\today}

\author{E.~Di Nezza}

\address{I.M.T., Universit{\'e} Paul Sabatier\\
31062 Toulouse cedex 09\\
France}

\email{eleonora.dinezza@math.univ-toulouse.fr}

\address{Universit\`a di Roma `Tor Vergata '\\
Rome\\
Italy}
\email{dinezza@mat.uniroma2.it}

\begin{abstract}
We show that the non pluripolar product of positive currents is a bimeromorphic invariant. Under some natural assumptions, we show that the (weighted) energy associated to big cohomology classes are also bimeromorphic invariants. We compare the weighted energy functionals of currents w.r.t to different cohomology classes and establish quantitative estimates between big capacities.
\end{abstract} 

\maketitle

\tableofcontents

\newpage

\section*{Introduction}
Let $X$ be a compact $n$-dimensional K{\"a}hler manifold, $T_1=\theta_1+dd^c\f_1,...,T_p=\theta_p+dd^c\f_p$ be closed positive $(1,1)$-currents and $\theta_1+dd^c V_{\theta_1}, ..., \theta_p+dd^c V_{\theta_p} $ be canonical currents with minimal singularities. Following the construction of Bedford-Taylor \cite{bt1} in the local setting, it has been shown in \cite{begz} that 
$$ {\bf 1}_{\bigcap_j\{\f_j>V_{\theta_j}-k\}}(\theta_1+dd^c \max(\f_1, V_{\theta_1}-k))\wedge...\wedge (\theta_p+dd^c\max(\f_p, V_{\theta_p}-k))$$ is non-decreasing in $k$ and converge to the so called \emph{non-pluripolar product} $$\langle T_1\wedge...\wedge T_p\rangle.$$
The resulting positive $(p,p)$-current does not charge pluripolar sets and
it is always \emph{well-defined} and \emph{closed}.\\
\indent Given $\a$ a big cohomology class, a positive closed $(1,1)$-current $T\in\a$ is said to have \emph{full Monge-Amp{{\`e}}re mass} if
$$\int_X \langle T^n \rangle=\vol(\a)$$
and we then write $T\in \cE(X,\a)$.
In \cite{begz} the authors define also \emph{weighted energy functio\-nals} $E_{\chi}$ (for any weight $\chi$) in the general context of a big class extending the case of a K{\"a}hler class (\cite{GZ2}). The space of currents with finite weighted energy is denoted by $\cE_{\chi}(X,\a)$.

\indent The aim of the present paper is to show the invariance of the non-pluripolar product and establish stability properties of energy classes.

\begin{thmA}
\emph{The non-pluripolar product is a bimeromorphic invariant.}\\
More precisely, fix $\a\in H^{1,1}(X,\R)$ a big class and $f:X-->Y$ a bimeromorphic map, then 
\begin{itemize}
\item[1)] $f_{\star} \langle T^n\rangle=\langle (f_{\star} T)^n\rangle$ for any positive closed  $T\in\a$.
\end{itemize}
Furthermore if $f_{\star} \Big( \mathcal{T}_{\a}(X)  \Big)= \mathcal{T}_{f_{\star} \a}(Y)$ then
\begin{itemize}
\item[2)] $f_{\star}(\cE(X,\a))= \cE(Y,f_{\star}\a)$;
\item[3)]  $f_{\star}(\cE_{\chi}(X,\a))= \cE_{\chi}(Y,f_{\star}\a)$ for any weigth $\chi\in {\mathcal W}^-\cup {\mathcal W}^+_M$.
\end{itemize}
\end{thmA}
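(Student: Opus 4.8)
The plan is to resolve the bimeromorphic map and then exploit that the exceptional and indeterminacy loci are pluripolar, hence invisible to the non-pluripolar product. By Hironaka's theorem there are a smooth compact K\"ahler manifold $Z$ and modifications $p\colon Z\to X$, $q\colon Z\to Y$ with $f=q\circ p^{-1}$, and then $f_{\star}S:=q_{\star}p^{\star}S$ for positive closed currents $S$ on $X$ (independence of the resolution being routine). In particular $f$ restricts to a biholomorphism $X\setminus A\to Y\setminus A'$, where $A\subset X$ and $A'\subset Y$ are pluripolar sets which we may take to contain the exceptional and indeterminacy loci together with the pole sets of all the potentials that will appear; off $A$, $A'$ every such potential is locally bounded. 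Since a positive closed $(1,1)$-current is determined by its restriction to the complement of a pluripolar set, the map $f_{\star}\colon\mathcal{T}_{\a}(X)\to\mathcal{T}_{f_{\star}\a}(Y)$ is automatically injective; the hypothesis in 2)--3) is precisely that it is also surjective.

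For 1), I would show that the positive measures $f_{\star}\langle T^n\rangle$ and $\langle(f_{\star}T)^n\rangle$ on $Y$ coincide. On $Y\setminus A'$ they are both the $f$-transport of the restriction to $X\setminus A$ of the non-pluripolar product, because there $f$ is a biholomorphism, $f_{\star}T$ is the transported current, and on $X\setminus A$ the non-pluripolar product is an ordinary Bedford--Taylor product of locally bounded potentials and so transforms functorially. Neither measure charges $A'$: for $\langle(f_{\star}T)^n\rangle$ this is the defining property of the non-pluripolar product, and for $f_{\star}\langle T^n\rangle$ it holds because $\langle T^n\rangle$ does not charge pluripolar sets while $p^{\star}$, $q_{\star}$ and transport only relocate pluripolar data. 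Hence the two measures are equal. The same argument applied to $\langle T_1\wedge\cdots\wedge T_n\rangle$ yields $f_{\star}\langle T_1\wedge\cdots\wedge T_n\rangle=\langle(f_{\star}T_1)\wedge\cdots\wedge(f_{\star}T_n)\rangle$, which I reuse in 3). Finally $f_{\star}T$ is positive, closed and lies in $f_{\star}\a$ by construction.

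For 2) and 3) the key preliminary is that $f_{\star}$ carries the current of minimal singularities to the current of minimal singularities, i.e. $f_{\star}(\theta+dd^c V_{\theta})=\theta_Y+dd^c V_{\theta_Y}$ with $\theta_Y=f_{\star}\theta$. Granting this, the relative potentials correspond, $(\varphi_T-V_{\theta})\circ f^{-1}=\varphi_{f_{\star}T}-V_{\theta_Y}$ off $A'$, and, combining the identity $\vol(\a)=\int_X\langle(\theta+dd^c V_{\theta})^n\rangle$ with 1), also $\vol(f_{\star}\a)=\vol(\a)$. Then 2) follows: $T\in\cE(X,\a)\iff\int_X\langle T^n\rangle=\vol(\a)\iff\int_Y\langle(f_{\star}T)^n\rangle=\vol(f_{\star}\a)\iff f_{\star}T\in\cE(Y,f_{\star}\a)$, and the surjectivity hypothesis upgrades this to $f_{\star}(\cE(X,\a))=\cE(Y,f_{\star}\a)$. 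For 3), recall that $E_{\chi}(\varphi)$ is a finite combination of integrals $\int_X\chi(\varphi-V_{\theta})\,\langle T^j\wedge(\theta+dd^c V_{\theta})^{n-j}\rangle$, $0\le j\le n$, and that for $\chi\in\mathcal{W}^-\cup\mathcal{W}^+_M$ finiteness of $E_{\chi}(\varphi)$ is equivalent to $\varphi\in\cE_{\chi}(X,\a)$. The mixed form of 1) transports each measure to $Y$, the relative-potential correspondence transports each integrand, and the change of variables $y=f(x)$ — legitimate because every measure involved is carried by the locus where $f$ is a biholomorphism — gives $E_{\chi}(\varphi_T)=E_{\chi}(\varphi_{f_{\star}T})$; as in 2), surjectivity then yields $f_{\star}(\cE_{\chi}(X,\a))=\cE_{\chi}(Y,f_{\star}\a)$.

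The main obstacle I anticipate is this preliminary, the preservation of the current of minimal singularities. One must verify that $f_{\star}$ preserves the partial order by singularity type on $\mathcal{T}_{\a}(X)$ — which holds off analytic sets and then propagates to all of $Y$ since $\theta$-psh functions are determined off pluripolar sets — so that the unique least-singular element goes to the unique least-singular element; here the hypothesis $f_{\star}(\mathcal{T}_{\a}(X))=\mathcal{T}_{f_{\star}\a}(Y)$ is genuinely used, being exactly what makes $f_{\star}$ an order isomorphism of the two families, together with the pointwise identity $p^{\star}V_{\theta}=V_{p^{\star}\theta}$ for a modification $p$. Everything else reduces, through the resolution, to the single observation behind 1): the non-pluripolar product ignores the exceptional and indeterminacy loci because they are pluripolar.
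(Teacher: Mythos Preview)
Your overall strategy coincides with the paper's: establish 1) by restricting to the Zariski open set where $f$ is a biholomorphism and using that the non-pluripolar product ignores pluripolar sets; then deduce 2)--3) from the fact that the hypothesis $f_\star(\mathcal{T}_\a(X))=\mathcal{T}_{f_\star\a}(Y)$ forces $f_\star$ to preserve minimal singularities, so that volumes and the relative potentials $\f-V_\theta$ transport.

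There is however a genuine gap in your justification of 1). You enlarge $A$ to contain the pole sets $\{\f=-\infty\}$ and assert that off $A$ every potential is locally bounded, so that the non-pluripolar product becomes an ordinary Bedford--Taylor product there. This fails: the complement of $\{\f=-\infty\}$ need not be open, and $\f$ need not be locally bounded on it in any reasonable sense---consider $\f(z)=\sum_n 2^{-n}\log|z-1/n|$ at $z=0$. The paper instead takes $A$ to be only the analytic exceptional locus (so $X\setminus A$ is open and $f$ is an honest biholomorphism there) and then unwinds the \emph{definition} of the non-pluripolar product: locally write $T_j=dd^c u_j$, pass to the bounded truncations $\max(u_j,-k)$, and observe that on the plurifine open sets $\bigcap_j\{u_j>-k\}$ their Bedford--Taylor products transform functorially under $f$; then let $k\to\infty$. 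A smaller imprecision: $f_\star\theta$ is generally not a smooth form, so writing $\theta_Y=f_\star\theta$ and speaking of $V_{\theta_Y}$ is problematic. What is actually true is that $f_\star(\theta+dd^c V_\theta)$ is \emph{a} current with minimal singularities in $f_\star\a$; its potential relative to any chosen smooth representative differs from the canonical $V$ only by a bounded function, and this $O(1)$ defect is harmless for membership in $\cE$ or $\cE_\chi$. The paper sidesteps the issue by working with the canonical approximants $\f^k=\max(\f,V_\theta-k)$ and transporting the differences $\f^k-V_\theta$ directly via change of variables on the Zariski open set.
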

\indent Here $\mathcal{T}_{\a}(X)$ denotes the set of all positive and closed currents in the big class $\a$ and $\mathcal{T}_{f_{\star} \a}(Y)$ is the set of all positive closed currents in the image class. The Condition on the image of positive currents insures that the push-forward of a current with minimal singularities is still with minimal singularities:
this easily implies that the volumes are preserved, i.e. $\vol(\a)=\vol(f_{\star} \a)$. We show conversely in Propostion \ref{deczar} that the condition $f_{\star} \Big( \mathcal{T}_{\a}(X)  \Big)= \mathcal{T}_{f_{\star} \a}(Y)$ is equivalent to $\vol(\a)=\vol(f_{\star} \a)$ in complex dimension 2, by using the existence of Zariski decompositions.

\indent A related problem is to understand what happens to the energy classes if we change cohomology classes on a fixed compact K{\"a}hler manifold. Let $\a,\b$ be big cohomology classes. Given $T\in\mathcal{T}_{\a}(X)$ and $S\in\mathcal{T}_{\beta}(X)$ so that $T+S\in\mathcal{T}_{\a+\beta}(X)$, we wonder whether 
$$T\in\cE_{\chi}(X,\a) \quad \textit{and} \quad S\in \cE_{\chi}(X,\beta)\quad \stackrel{\displaystyle\Longrightarrow}{\Longleftarrow}\quad T+S\in \cE_{\chi}(X,\a+\beta)$$  \indent It turns out that $T+S\in\cE_{\chi}(X,\a+\b)$ implies $T\in\cE_{\chi}(X,\a)$ and $S\in \cE_{\chi}(X,\beta)$ in a very general context (Proposition \ref{pro1}) but the reverse implication is false in general (see Counterexamples \ref{nostab} and \ref{ce2}). We obtain a positive answer under restrictive conditions on the cohomology classes (see Propositions \ref{pro3} and $\ref{quasih}$).
\begin{thmB}
Let $\a,\b$ be merely big classes, $T\in \mathcal{T}_{\a}(X) $, $S\in\mathcal{T}_{\b}(X)$ and $\chi\in {\mathcal W}^-\cup {\mathcal W}^+_M$. Then 
\begin{itemize}
\item[1)] $T+S\in \cE(X, \a+\beta )$ implies $T\in\cE(X, \a)$ and $S\in\cE(X, \b)$,
\item[2)] $T+S\in \cE_{\chi}(X, \a+\beta )$ implies $T\in\cE_{\chi}(X, \a)$ and $S\in\cE_{\chi}(X, \b)$.
\end{itemize}
\indent If $\a,\b$ are K{\"a}hler, conversely
\begin{itemize}
\item[3)] $T\in\cE(X, \a)$ and $S\in\cE(X, \b)$  implies  $T+S\in \cE(X, \a+\beta )$,
\item[4)] $T\in\cE_{\chi}(X, \a)$ and $S\in\cE_{\chi}(X, \b)$  implies $T+S\in \cE_{\chi}(X, \a+\beta )$.
\end{itemize}
\end{thmB}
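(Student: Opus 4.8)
I would prove the four assertions by separating the two implications. For the forward directions (1) and (2), which need no K{\"a}hler hypothesis, write $T=\theta_\a+dd^c\varphi$, $S=\theta_\b+dd^c\psi$ for smooth forms $\theta_\a\in\a$, $\theta_\b\in\b$, and let $T_{\min}$, $S_{\min}$ be the associated currents with minimal singularities. The plan is to combine multilinearity of the non-pluripolar product, $\langle(T+S)^n\rangle=\sum_{k=0}^n\binom{n}{k}\langle T^k\wedge S^{n-k}\rangle$ (and the same for $T_{\min}+S_{\min}$), with two facts from \cite{begz}: currents with minimal singularities maximise the (mixed) non-pluripolar masses, so $\int_X\langle T^k\wedge S^{n-k}\rangle\le\int_X\langle T_{\min}^k\wedge S_{\min}^{n-k}\rangle$ for every $k$; and every positive current in $\a+\b$ has total non-pluripolar mass at most $\vol(\a+\b)$, which applies to $T_{\min}+S_{\min}$. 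Feeding in the hypothesis $\int_X\langle(T+S)^n\rangle=\vol(\a+\b)$ squeezes all these inequalities into equalities, and the $k=n$ and $k=0$ terms read $\int_X\langle T^n\rangle=\vol(\a)$ and $\int_X\langle S^n\rangle=\vol(\b)$, which is (1).

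For (2) I would start from (1), so that $T\in\cE(X,\a)$ and $S\in\cE(X,\b)$, and normalise $\varphi\le V_{\theta_\a}$, $\psi\le V_{\theta_\b}$ (shifting the potentials by constants changes neither the currents nor the finiteness of the weighted energies). Since $V_{\theta_\a}+V_{\theta_\b}$ is $(\theta_\a+\theta_\b)$-psh and $\le 0$, one has $V_{\theta_\a}+V_{\theta_\b}\le V_{\theta_\a+\theta_\b}$, hence
$$\varphi+\psi-V_{\theta_\a+\theta_\b}\ \le\ (\varphi-V_{\theta_\a})+(\psi-V_{\theta_\b})\ \le\ \varphi-V_{\theta_\a}\ \le\ 0.$$
As every $\chi\in\mathcal{W}^-\cup\mathcal{W}^+_M$ is non-decreasing and $\le 0$ on $\R^-$, this gives $(-\chi)(\varphi-V_{\theta_\a})\le(-\chi)(\varphi+\psi-V_{\theta_\a+\theta_\b})$ pointwise, and using $\langle T^n\rangle\le\langle(T+S)^n\rangle$ as measures (the $k=n$ term of the expansion above) I would conclude $\int_X(-\chi)(\varphi-V_{\theta_\a})\langle T^n\rangle\le\int_X(-\chi)(\varphi+\psi-V_{\theta_\a+\theta_\b})\langle(T+S)^n\rangle<\infty$, i.e. $T\in\cE_\chi(X,\a)$, and symmetrically for $S$. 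This is essentially Proposition \ref{pro1}.

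For the converse, take $\a=\{\omega_1\}$, $\b=\{\omega_2\}$ K{\"a}hler, $T=\omega_1+dd^c\varphi$, $S=\omega_2+dd^c\psi$ with $\sup_X\varphi=\sup_X\psi=0$; then $\varphi+\psi$ is $(\omega_1+\omega_2)$-psh, so $T+S\in\mathcal{T}_{\a+\b}(X)$ automatically. For (3) I would expand $\langle(T+S)^n\rangle=\sum_k\binom{n}{k}\langle T^k\wedge S^{n-k}\rangle$ and invoke the fact that full-mass currents in K{\"a}hler classes lose no mass in mixed non-pluripolar products (proved by passing to the limit along the canonical cut-offs $\max(\varphi,-j)$, $\max(\psi,-j)$, cf. \cite{begz}, \cite{GZ2}) to get $\int_X\langle T^k\wedge S^{n-k}\rangle=\omega_1^k\cdot\omega_2^{n-k}$ for all $k$; summing yields $(\omega_1+\omega_2)^n=\vol(\a+\b)$, so $T+S\in\cE(X,\a+\b)$. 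For (4), (3) reduces the claim to $\int_X(-\chi)(\varphi+\psi)\langle(T+S)^n\rangle<\infty$. Here I would use the elementary inequality that a convex $\chi$ with $\chi(0)=0$ is superadditive on $\R^-$ (and that for $\chi\in\mathcal{W}^+_M$ the defining doubling bound gives $|\chi(a+b)|\le M\,(|\chi(a)|+|\chi(b)|)$), so that $(-\chi)(\varphi+\psi)\le C_\chi\big((-\chi)(\varphi)+(-\chi)(\psi)\big)$. Expanding $\langle(T+S)^n\rangle$ once more, it then suffices to bound the mixed weighted masses $\int_X(-\chi)(\varphi)\langle T^k\wedge S^{n-k}\rangle$ and $\int_X(-\chi)(\psi)\langle T^k\wedge S^{n-k}\rangle$ for all $k$; this is exactly what Propositions \ref{pro3} and \ref{quasih} supply, each such integral being dominated by a constant times $\int_X(-\chi)(\varphi)\langle T^n\rangle+\int_X(-\chi)(\psi)\langle S^n\rangle$, which is finite since $T\in\cE_\chi(X,\a)$ and $S\in\cE_\chi(X,\b)$.

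The hard part is this last input. Estimating $\int_X(-\chi)(\varphi)\langle T^k\wedge S^{n-k}\rangle$, where the weight sees only the singularities of $\varphi$ while the measure also carries those of $\psi$, requires running the Guedj--Zeriahi integration-by-parts scheme with respect to the two classes $\a$ and $\b$ simultaneously; the admissible constants, and even the shape of the estimate, genuinely differ for convex weights ($\mathcal{W}^-$) and for concave weights with a doubling condition ($\mathcal{W}^+_M$), which is why the two cases are handled apart. A second, more delicate ingredient — already used in (3) and implicitly in (4) — is the absence of mass loss in mixed non-pluripolar products of full-mass currents: transparent for bounded potentials, but in general needing a careful limiting argument along the canonical cut-offs while keeping the sup-normalisations of $\varphi,\psi$ and the choices of minimal-singularity potentials compatible across $\a$, $\b$ and $\a+\b$.
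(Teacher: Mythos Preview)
Your arguments for (1) and (2) are sound. For (1) your route via multilinearity of the non-pluripolar product together with the monotonicity of mixed masses (\cite[Theorem~1.16]{begz}) is in fact cleaner than the paper's, which instead shows directly that $\int_{\{\f\le V_{\theta_\a}-k\}}\MA(\f^{(k)})\to 0$ by nesting the sublevel sets $\{\f-V_{\theta_\a}<-k\}$ inside $\{\f+\psi-V_{\tilde\theta}<-k\}$ off $\{\psi=-\infty\}$ and invoking \cite[Proposition~2.14]{begz}. For (2) your idea is the paper's: compare $\f-V_{\theta_\a}\ge(\f+\psi)-V_{\tilde\theta}$ and use $\langle T^n\rangle\le\langle(T+S)^n\rangle$; the paper just makes the passage through the canonical approximants $\f_k=\max(\f,V_{\theta_\a}-k)$ explicit.

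The converse directions are where your proposal breaks down. For (3) you invoke the statement that full-mass currents in K\"ahler classes lose no mass in \emph{mixed} non-pluripolar products, i.e.\ $\int_X\langle T^k\wedge S^{n-k}\rangle=\{\omega_1\}^k\cdot\{\omega_2\}^{n-k}$. This is true, but it is not a ready citation from \cite{begz} or \cite{GZ2}; it is essentially equivalent to what you are trying to prove, and your final paragraph concedes that the limiting argument is ``delicate'' and left undone. More seriously, in (4) you assert that the mixed weighted integrals $\int_X(-\chi)(\f)\langle T^k\wedge S^{n-k}\rangle$ are ``exactly what Propositions~\ref{pro3} and~\ref{quasih} supply''. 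They are not: Proposition~\ref{pro3} requires the second current to be a \emph{smooth} semipositive form $\theta_\b$, while Proposition~\ref{quasih} treats only the homogeneous weights $\chi(t)=-(-t)^p$, requires $S$ to have \emph{bounded} potential, and even then only yields $\cE^p\to\cE^q$ with a loss $q<p-n^2+1$. Neither controls $\int_X(-\chi)(\f)\langle T^k\wedge S^{n-k}\rangle$ for a general $S\in\cE_\chi(X,\b)$ and a general $\chi\in\cW^-\cup\cW^+_M$, so this step is a genuine gap.

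The paper's route to (3) and (4) sidesteps mixed estimates entirely. Since $\omega_\a,\omega_\b$ are K\"ahler one chooses a single K\"ahler form $\omega$ with $\omega_\a,\omega_\b\le C\omega$; an elementary expansion of $(\omega_\a+dd^c\f_k)^n\le(C\omega+dd^c\f_k)^n$ shows that $\f\in\cE(X,\omega_\a)$ iff $\f\in\cE(X,\omega)$ (and likewise for $\cE_\chi$) whenever $\f\in PSH(X,\omega)$. Thus $\f,\psi\in\cE(X,\omega)$ (resp.\ $\cE_\chi(X,\omega)$), and the \emph{convexity} of these classes, established in \cite[Propositions~1.6, 2.10 and 3.8]{GZ2}, gives $\tfrac{\f+\psi}{2}\in\cE(X,\omega)$, hence $\f+\psi\in\cE(X,2\omega)$ (resp.\ $\cE_\chi(X,2\omega)$); one more comparison brings this back to $\cE(X,\omega_\a+\omega_\b)$ (resp.\ $\cE_\chi$).
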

\begin{proC}\label{prop}
Assume that $S\in \beta$ has bounded local potentials and that the sum of currents with minimal singularities in $\a$ and in $\beta$ is still with minimal singularities. If $p> n^2-1$ then
$$ T\in \mathcal{E}^p(X,\a) \Longrightarrow T+S\in \mathcal{E}^q(X,\a+\beta),$$
where $0<q<p-n^2+1$.
\end{proC}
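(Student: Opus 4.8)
The plan is to exploit that both hypotheses only involve bounded perturbations, reducing the statement to one integrability bound. Write $T=\te+dd^c\f$ and $S=\eta+dd^c\p$. Since $X$ is compact, $\p$ is globally bounded, so $\b$ carries a bounded potential and $V_\eta$ is bounded; the assumption that the sum of currents with minimal singularities in $\a$ and in $\b$ is again of minimal singularities then forces $V_{\te+\eta}=V_\te+V_\eta+O(1)=V_\te+O(1)$. Hence $|(\f+\p)-V_{\te+\eta}|\le|\f-V_\te|+C$ on $X$, and it suffices to prove (i) $T+S\in\cE(X,\a+\b)$ and (ii) $\int_X|\f-V_\te|^{q}\langle(T+S)^n\rangle<+\infty$ for every $q<p-n^2+1$.

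For (i) I would argue that adding a bounded potential preserves the class $\cE$: $S$ has full Monge--Amp\`ere mass, $\vol(\a+\b)=\int_X\langle(\te+\eta+dd^c(V_\te+V_\eta))^n\rangle$ because $V_\te+V_\eta$ is of minimal singularities in $\a+\b$, and running the canonical truncations $\f_k:=\max(\f,V_\te-k)$ (legitimate since $T\in\cE(X,\a)$) together with the monotone convergence of non-pluripolar products and $\p$ bounded yields $\int_X\langle(T+S)^n\rangle=\vol(\a+\b)$.

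For (ii) I would expand multilinearly, $\langle(T+S)^n\rangle=\sum_{j=0}^{n}\binom{n}{j}\langle T^{j}\wedge S^{\,n-j}\rangle$, and bound each term $\int_X|\f-V_\te|^{q}\langle T^{j}\wedge S^{\,n-j}\rangle$ separately. The term $j=n$ is $\le\int_X|\f-V_\te|^{p}\langle T^n\rangle+\vol(\a)<+\infty$ since $q<p$ and $T\in\cE^p(X,\a)$. For $j<n$ I would proceed by descending induction on $j$, removing one factor of $S$ at a time: after a layer-cake decomposition the $S$-factor is dominated by a Monge--Amp\`ere capacity via the Bedford--Taylor type inequality $\langle\,\cdot\wedge S\rangle\le(\operatorname{osc}\p)\,\ca_{\b}$, the capacity $\ca_\b$ is compared with $\ca_\a$ through the quantitative comparison of big capacities proved earlier in the paper, and the hypothesis $T\in\cE^p(X,\a)$ is used in the form of a decay estimate for $\ca_\a(\{\f<V_\te-t\})$ coming from the capacitary description of the energy classes; an integration by parts for non-pluripolar products of full-mass currents (licit here by (i) and the reductions) combined with H\"older's inequality redistributes the powers of $|\f-V_\te|$ and absorbs the bounded terms $\p,V_\eta,V_{\te+\eta}-V_\te$. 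At each exchange the admissible exponent drops, and the $n$-linearity of the product — equivalently the $n$-th power appearing in the capacity comparison — makes the accumulated loss over the $n-j$ removals equal to $n^2-1$; hence every term is finite as soon as $q+(n^2-1)<p$, which is exactly the asserted range. Combined with the pointwise comparison of the first paragraph, this gives $T+S\in\cE^q(X,\a+\b)$.

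The hard part will be the induction in (ii): one must carry out the exchanges rigorously — the non-pluripolar integration-by-parts identity and the monotonicity/comparison estimates are available only for currents with full Monge--Amp\`ere mass, so each step has to be run on the truncations $\te+dd^c\f_k$ and their mixed products with a controlled passage to the limit $k\to\infty$ — and, above all, one must keep the exponent accounting sharp enough to land precisely on the loss $n^2-1$, which is where the explicit form of the earlier capacity comparison is essential; the full-mass statement (i), while needed, is comparatively routine.
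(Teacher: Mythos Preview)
Your proposal identifies the two key tools — the capacitary characterization of $\cE^p$ (Lemma~\ref{cap}) and the quantitative comparison of big capacities (Theorem~\ref{compcap}) — but you assemble them in a much more elaborate way than the paper does, and the inductive step in your~(ii) is where the argument becomes unconvincing.

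The paper's proof is a three-line application of these two results, with no multilinear expansion, no induction, and no separate full-mass argument. One uses the forward direction of Lemma~\ref{cap} to convert $T\in\cE^p(X,\a)$ into the decay $\ca_{\te_{\a,\min}}(\{\f<V_{\te_\a}-t\})\le C\,t^{-(p+1)}$; observes from Condition~$\mathcal{MS}$ and the boundedness of $\p$ that the sublevel set $\{\f+\p<V_{\tilde\te}-t\}$ sits inside $\{\f<V_{\te_\a}-t\}$; applies Theorem~\ref{compcap} \emph{once} to obtain $\ca_{\tilde\te_{\min}}(\{\f+\p<V_{\tilde\te}-t\})\le C\,t^{-(p+1)/n}$; and then invokes the converse direction of Lemma~\ref{cap} to conclude $\f+\p\in\cE^q(X,\a+\b)$. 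In particular your step~(i) is superfluous: membership in $\cE^q$ already contains the full-mass statement.

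In your scheme the doubtful points are these. The ``Bedford--Taylor type inequality'' $\langle\,\cdot\wedge S\rangle\le(\operatorname{osc}\p)\,\ca_{\b}$ is not a standard statement in that form: the capacity is a supremum over test functions, while the factor $T^j$ in $\langle T^j\wedge S^{n-j}\rangle$ is fixed and unbounded, so the $S$-factor cannot simply be peeled off against $\ca_\b$. It is also unclear how repeatedly removing $S$-factors would interact with the single $1/n$ exponent in Theorem~\ref{compcap}; your assertion that the accumulated loss over the $n-j$ removals is exactly $n^2-1$, independently of $j$, has no visible mechanism behind it, and the ``integration by parts $+$ H\"older'' passage is too vague to check the bookkeeping. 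If you try to carry this out, the cleanest way to control $\int_K\langle T^j\wedge S^{n-j}\rangle$ on a sublevel set $K$ will be to dominate it by $\int_K\langle(T_k+S)^n\rangle$ and hence by $\ca_{\tilde\te_{\min}}(K)$ --- at which point you are back on the paper's direct route anyway, without the expansion.
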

\noindent We stress that the condition on the sum of currents having minimal singularities is not always satisfied as noticed in Remark \ref{remmin}, but it is a necessary condition if we want the positive intersection class $\langle \a\cdot\beta \rangle$ to be multi-linear (see \cite{begz}).
\indent In our proof of Proposition \textbf{C} we establish a comparison result of capacities which is of independent interest:
\begin{thmD}
Let $\a$ be a big class and $\b$ be a semipositive class. We assume that the sum of currents with minimal singularities in $\a$ and $\b$ is still with minimal singularities. 
Then, for any Borel set $K\subset X$, there exist $C>0$ such that
$$\frac{1}{C}\,Cap_{\theta_{\a,\min}}(K)\leq  \,Cap_{\theta_{\a+\b,\min}}(K)\leq C \left(Cap_{\theta_{\a,\min}}(K)\right)^{\frac{1}{n}}$$
where $\theta_{\a,\min}:=\theta_\a+dd^cV_{\theta_\a}$.
\end{thmD}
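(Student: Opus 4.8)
The plan is to fix smooth representatives $\theta_\a\in\a$ and a semipositive $\theta_\b\in\b$ (possible since $\b$ is semipositive), set $\theta':=\theta_\a+\theta_\b\in\a+\b$, and reduce to $K$ compact, the general Borel case following from the inner/outer regularity of both Monge--Amp\`ere capacities. Since modifying the reference current inside a capacity by a bounded potential only changes the capacity by a bounded factor, one has $Cap_{\theta_{\a,\min}}=Cap_{\theta_\a}$ and $Cap_{\theta_{\a+\b,\min}}=Cap_{\theta'}$ in the sense of \cite{begz}. The hypothesis enters only through the elementary observation that it makes $V_{\theta_\a}$ and $V_{\theta'}$ uniformly comparable: $V_{\theta_\a}$ is $\theta'$-psh and $\le 0$, hence $V_{\theta_\a}\le V_{\theta'}$, while ``$\theta_{\a,\min}+\theta_\b$ has minimal singularities in $\a+\b$'' gives $V_{\theta_\a}\ge V_{\theta'}-C_0$ for some $C_0\ge 0$; equivalently, $\theta_\a$-psh functions with minimal singularities for $\a$ are exactly those with minimal singularities for $\a+\b$, up to a uniform shift.

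For the lower bound, given a candidate $\f$ for $Cap_{\theta_\a}(K)$, i.e.\ $\f$ $\theta_\a$-psh with $V_{\theta_\a}\le\f\le V_{\theta_\a}+1$, the function $\f$ is $\theta'$-psh and stays within a uniform distance of $V_{\theta'}$; a suitable convex combination of $\f$ and $V_{\theta'}$, shifted by a constant, produces a candidate $\tilde\p$ for $Cap_{\theta'}(K)$ with $\theta'+dd^c\tilde\p=\tfrac1{C_0+1}(\theta_\a+dd^c\f+\theta_\b)+\tfrac{C_0}{C_0+1}\theta_{\a+\b,\min}$. By multilinearity of the non-pluripolar product, positivity of $\theta_\b$ and $\theta_{\a+\b,\min}$, and discarding all but one term, $\langle(\theta'+dd^c\tilde\p)^n\rangle\ge(C_0+1)^{-n}\langle(\theta_\a+dd^c\f)^n\rangle$; integrating over $K$ and taking the supremum over $\f$ gives $Cap_{\theta_\a}(K)\le (C_0+1)^n\,Cap_{\theta'}(K)$.

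For the upper bound --- where the exponent $1/n$ is genuine --- I would estimate $\int_K\langle(\theta'+dd^c\p)^n\rangle$ for $\p$ the $\theta'$-relative extremal function of $K$, transferring the estimate from $\a+\b$ to $\a$. The mechanism: compare $\p$ with $\theta_\a$-psh functions with minimal singularities (using the comparison principle for $\theta'$, the comparability of $V_{\theta_\a}$ and $V_{\theta'}$, and the balayage/locality properties of the envelope $P_{\theta_\a}$), so that $\int_K\langle(\theta'+dd^c\p)^n\rangle$ is bounded by a combination of integrals over $K$ of mixed measures $\langle(\theta_\a+dd^c u)^j\wedge\theta_\b^{n-j}\rangle$ with $u$ $\theta_\a$-psh; then by the mixed Monge--Amp\`ere (generalised H\"older) inequality $\int_K\langle(\theta_\a+dd^c u)^j\wedge\theta_\b^{n-j}\rangle\le\big(\int_K\langle(\theta_\a+dd^c u)^n\rangle\big)^{j/n}(\b^n)^{(n-j)/n}$, and $\int_K\langle(\theta_\a+dd^c u)^n\rangle\le C'\,Cap_{\theta_\a}(K)$ since, after an affine normalisation, $u$ is a candidate for $Cap_{\theta_\a}(K)$. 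As $Cap_{\theta_\a}(K)$ is bounded, summing over $j$ gives $Cap_{\theta'}(K)\le C\,Cap_{\theta_\a}(K)^{1/n}$, the $1/n$-th power coming precisely from the H\"older step.

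The main obstacle is the transfer step, since the two obvious choices of comparison function fail individually: if $u$ lies everywhere below $\p$ (e.g.\ $u=P_{\theta_\a}(\p)$), the comparison principle only controls $\int_{\{u<\p\}}\langle(\theta'+dd^c\p)^n\rangle$ over a set $\{u<\p\}$ a priori much larger than $K$; if instead $u$ is arranged so that $\{u<\p\}\subset K$, then $u$ is $\theta_\a$-psh and $\langle(\theta'+dd^c u)^n\rangle\ge\theta_\b^n$ carries a free mass $\int_K\theta_\b^n\le\b^n$ on $K$, not $O(Cap_{\theta_\a}(K)^{1/n})$. Reconciling them is exactly where the minimal-singularities hypothesis must be used quantitatively --- beyond the comparability of reference functions it already supplies (equivalently, the coincidence of the augmented base loci of $\a$ and $\a+\b$): one should compare $\p$ with a $\theta'$-psh envelope built from a $\theta_\a$-extremal function of $K$ having a large but fixed ($\a,\b$-dependent) drop, which both confines $\{u<\p\}$ to a neighbourhood of $K$ and retains enough $\theta_\b$-positivity to absorb the $\theta_\b^n$-contribution; alternatively one routes the estimate through the Alexander--Zeriahi global capacity $T_\bullet$, using two-sided comparisons of the form $T_\bullet\le C\,Cap_\bullet$ and $Cap_\bullet^{n}\le C\,\vol\cdot T_\bullet$ together with the easier comparability of $T_{\theta_\a}$ and $T_{\theta'}$, which again yields the $1/n$-loss. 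The remaining points --- reduction to compact $K$, the quasi-everywhere statements on envelopes and extremal functions, and fine-boundary coincidences of Monge--Amp\`ere measures on contact sets --- are routine.
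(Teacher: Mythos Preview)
Your lower bound is essentially the paper's (the paper adds the bounded potential $f_0$ of a positive representative of $\b$ to the candidate $\f$ and absorbs the resulting shift via an auxiliary capacity $Cap^\lambda$ with $Cap^\lambda\le\lambda^n Cap$, rather than your convex combination; the two devices are interchangeable). For the upper bound the paper takes exactly the route you list as the ``alternative'': one compares Alexander capacities via $V_{K,\theta_\a}^*+f_0\le V_{K,\theta'}^*+O(1)$, which gives $T_{\theta'}(K)\le C\,T_{\theta_\a}(K)$, and then sandwiches with the two sides of the Alexander--Monge--Amp\`ere comparison, whose asymmetry (one side is $\exp(-A/Cap)$, the other $\exp(-(\vol/Cap)^{1/n})$) is precisely the source of the exponent $1/n$. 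So your plan is correct once you commit to that second option.

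Your primary direct approach, however, has a real gap, and it is not the transfer step you diagnose but the H\"older step itself. The inequality
\[
\int_K\langle(\theta_\a+dd^c u)^j\wedge\theta_\b^{\,n-j}\rangle
\;\le\;\Big(\int_K\langle(\theta_\a+dd^c u)^n\rangle\Big)^{j/n}(\b^n)^{(n-j)/n}
\]
is false over an arbitrary compact $K$. Khovanskii--Teissier type inequalities for mixed masses go the \emph{opposite} way and only for total masses over $X$; Dinew-type pointwise mixed bounds require absolute continuity with respect to a common reference measure, which fails for generic candidates. Already when $\b^n=0$ (which is allowed here: $\b$ is only assumed semipositive) the right-hand side vanishes while the left need not; and even for $\b^n>0$, a local model with $dd^c u$ concentrated in one complex direction over $K$ and $\theta_\b$ in a transversal one breaks it. So even with a perfectly chosen $u$ the argument stalls at this line; the Alexander-capacity detour is the mechanism, not merely a convenience.
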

Let us now describe the contents of the article. We first introduce some basic notions such as currents with minimal singularities and finite energy classes and we recall more or less known facts, e.g. that currents with full Monge-Amp{{\`e}}re mass have zero Lelong number on a Zariski open set (Proposition \ref{lelong}).\\
\indent In Section \ref{due}, we show that the non-pluripolar product is a bimeromorphic invariant (Theorem \ref{bimer}). Furthermore, under a natural condition on the set of positive $(1,1)$-currents, we are able to prove that weighted energy classes are preserved under bimeromorphic maps (Proposition \ref{fullmass}).\\
\indent In the third part of the paper we study the stability of the energy classes (see e.g. Theorem \ref{pro1} and Proposition \ref{pro3}) and we give some counterexamples.\\
\indent Finally, we compare the Monge-Amp{{\`e}}re capacities w.r.t different big classes (Theorem \ref{compcap}) and we use this result to give a partial positive answer to the stability property of weighted homogeneous classes $\cE^p$ (Proposition \ref{quasih}).

\begin{ackn}
I would like to thank my advisors Vincent Guedj and Stefano Trapani for several useful discussions, for all the time they commit to my research and for their support. I also thank an anonymous referee who helped me clarifying Section \ref{due}.
\end{ackn}

\section{Preliminaries}
\subsection{Big classes}
Let $X$ be a compact K{\"a}hler manifold and let $\a\in H^{1,1}(X,\R)$ be a real $(1,1)$-cohomology class.

Recall that $\a$ is said to be \emph{pseudo-effective} (\emph{psef} for short) if it can be represented by a closed positive $(1,1)$-current $T$.
Given a smooth representative $\te$ of the class $\a$, it follows from $\partial\bar{\partial}$-lemma that any positive $(1,1)$-current can be written as $T=\te+dd^c \f$ where the global potential $\f$ is a $\te$-psh function, i.e. $\te+dd^c\f\geq 0$. Here, $d$ and $d^c$ are real differential operators defined as
$$d:=\partial +\bar{\partial},\qquad d^c:=\frac{i}{2\pi}\left(\bar{\partial}-\partial \right).$$
The set of all psef classes forms a closed convex cone and its interior is by definition the set of all \emph{big} cohomology classes:
\begin{defi} 
We say that $\a$ is \emph{big} if it can be represented by a \emph{K{\"a}hler current}, i.e. there exists a positive closed $(1,1)$-current $T\in\a $ that dominates a K{\"a}hler form .
\end{defi}

\subsubsection{Analytic and minimal singularities}
 
A positive current $T=\theta+dd^c\f$ is said to have \emph{analytic singularities} if there exists $c>0$ such that (locally on $X$),
$$
\f=\frac{c}{2}\log\sum_{j=1}^{N}|f_j|^2+u,
$$
where $u$ is smooth and $f_1,...f_N$ are local holomorphic functions.

\begin{defi}
{\it If $\a$ is a big class, we define its \emph{ample locus} $\Amp(\a)$ as the set of points $x\in X$ such that there exists a strictly positive current $T\in\a$ with analytic singularities and smooth around $x$.}
\end{defi}

The ample locus $\Amp(\a)$ is a Zariski open subset by definition, and it is nonempty thanks to Demaillly's regularization result (see \cite{Bou2}).

If $T$ and $T'$ are two closed positive currents on $X$, then $T$ is said to be \emph{more singular} than $T'$ if their local potentials satisfy $\f\le\f'+O(1)$.
\begin{defi} 
\it{A positive current $T$ is said to have \emph{minimal singularities} (inside its cohomology class  $\a$) if it is less singular than any other positive current in $\a$. Its $\theta$-psh potentials $\f$ will correspondingly be said to have minimal singularities.}
\end{defi}
Such $\theta$-psh functions with minimal singularities always exist, one can consider for example
$$V_\theta:=\sup\left\{ \f\,\,\theta\text{-psh}, \f\le 0\text{ on } X \right \}.$$ 

\begin{rem}\label{minsum}
Let us stress that the sum of currents with minimal singularities does not necessarily have minimal singularities. For example, consider $\pi: X \rightarrow \mathbb{P}^2 $ the blow up at one point $p$ and set $E:=\pi^{-1}(p)$. Take  
$ \a=\pi^{\star} \{\omega_{FS}\}+\{E\} $ and $\b=2\pi^{\star}\{ \omega_{FS}\}-\{E\}$ where $\omega_{FS}$ denotes the \emph{Fubini-Study} form on $\mathbb{P}^2 $. As we will see in Remark \ref{blow} currents with minimal singularities in $\a$ are of the form $S_{\min}=\pi^{\star} T_{\min}+[E] $ where $ T_{\min}$ is a current with minimal singularities in $\{\omega_{FS}\}$ (i.e. its potential is bounded) and so they have singularities along $E$. On the other hand, currents with minimal singularities in the K{\"a}hler class $\b$ have bounded potentials, hence the sum of currents with minimal singularities in $\a$ and in $\b$ is a current with unbounded potentials. But $ \a+\b= 3\pi^{\star}\{\omega_{FS}\} $ is semipositive hence currents with minimal singularities have bounded potentials.
\end{rem}
  
\subsubsection{Images of big classes.}
It is classical that big cohomology classes are invariant under pull back and push forward (see e.g. \cite[Proposition 4.13]{Bou1}).
\begin{lem}
Let $f : X--> Y$ be a bimeromorphic map and $\ax\in H^{1,1}(X,\R)$, $\ay\in H^{1,1}(Y,\R)$ be big cohomology classes. Then $f_{\star}\ax$ and $f^{\star}\ay$ are still big classes.
\end{lem}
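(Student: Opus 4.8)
The plan is to pass to a common smooth model and reduce the assertion to two statements about a single modification $g\colon Z\to W$ of compact K{\"a}hler manifolds: \emph{(i)} the push-forward of a big class is big, and \emph{(ii)} the pull-back of a big class is big. Since $X$ is K{\"a}hler, I would resolve the indeterminacy of $f$ by a finite sequence of blow-ups of $X$ with smooth centres; the resulting manifold $Z$ is again K{\"a}hler (the blow-up of a K{\"a}hler manifold along a smooth submanifold carries a K{\"a}hler form), and it comes equipped with modifications $\pi\colon Z\to X$ and $q\colon Z\to Y$ with $q=f\circ\pi$. By the very definition of the pull-back and push-forward of cohomology classes under a bimeromorphic map, $f^{\star}\ay=\pi_{\star}q^{\star}\ay$ and $f_{\star}\ax=q_{\star}\pi^{\star}\ax$ (and these do not depend on the chosen model), so it suffices to prove \emph{(i)} and \emph{(ii)} for $\pi$ and for $q$.

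For \emph{(i)}, let $\beta$ be big on $Z$ and choose a K{\"a}hler current $S\in\beta$, i.e. $S\ge\omega_Z$ for some K{\"a}hler form $\omega_Z$ on $Z$; fix also a K{\"a}hler form $\omega_W$ on $W$. The smooth semipositive form $g^{\star}\omega_W$ on the compact manifold $Z$ satisfies $g^{\star}\omega_W\le C\,\omega_Z$ for some $C>0$, hence $S\ge C^{-1}g^{\star}\omega_W$. Push-forward preserves positivity and closedness of currents, and since $g$ has topological degree one the projection formula gives $g_{\star}g^{\star}\omega_W=\omega_W$ as currents; therefore $g_{\star}S\ge C^{-1}\omega_W$. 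Thus $g_{\star}S$ is a K{\"a}hler current representing $g_{\star}\beta$, and \emph{(i)} follows.

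For \emph{(ii)}, let $\gamma$ be big on $W$ and pick a K{\"a}hler current $T\in\gamma$ with $T\ge\delta\,\omega_W$. Then $T-\delta\,\omega_W$ is a positive closed current, so $\gamma=\delta\{\omega_W\}+\mu$ with $\mu$ psef, and $g^{\star}\gamma=\delta\,g^{\star}\{\omega_W\}+g^{\star}\mu$. The class $g^{\star}\mu$ is psef, being represented by the pull-back of a positive closed current. The class $g^{\star}\{\omega_W\}$ is nef and has volume $\int_Z(g^{\star}\omega_W)^n=\int_W\omega_W^n>0$, hence is big (a nef class of positive volume is big); equivalently, after factoring $g$ through blow-ups with smooth centres, $g^{\star}\{\omega_W\}$ is a K{\"a}hler class plus an effective exceptional divisor class, which is big since a K{\"a}hler form plus a positive closed current is again a K{\"a}hler current. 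Finally, a big class plus a psef class is big: if $\lambda$ is big then $\lambda-\eta\{\omega_Z\}$ is psef for some $\eta>0$, so $\lambda+\nu-\eta\{\omega_Z\}$ is psef whenever $\nu$ is psef, and hence $\lambda+\nu$ contains a K{\"a}hler current. Applying this with $\lambda=\delta\,g^{\star}\{\omega_W\}$ and $\nu=g^{\star}\mu$ shows that $g^{\star}\gamma$ is big.

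The genuinely delicate step is \emph{(ii)}: one cannot simply pull back a K{\"a}hler current $T\ge\omega_W$ representing $\gamma$, since $g^{\star}T\ge g^{\star}\omega_W$ only yields a semipositive lower bound that degenerates along the exceptional locus of $g$. This is precisely why one must first trade a K{\"a}hler piece of $\gamma$ for the class $g^{\star}\{\omega_W\}$ and invoke separately that the latter, being nef of positive volume, is big.
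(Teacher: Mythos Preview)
The paper does not supply its own proof of this lemma: it simply records the statement as ``classical'' and refers to \cite[Proposition~4.13]{Bou1}. Your argument therefore cannot be compared to the paper's proof, but it is a correct and self-contained justification of the statement, essentially reproducing what one finds in Boucksom's work.

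One remark on your step \emph{(ii)}. Your primary argument---that $g^{\star}\{\omega_W\}$ is nef with $\int_Z(g^{\star}\omega_W)^n=\int_W\omega_W^n>0$, hence big---is the right one, but the implication ``nef with positive top self-intersection $\Rightarrow$ big'' on a compact K{\"a}hler manifold is itself a non-trivial input (it follows, for instance, from Demailly's holomorphic Morse inequalities, or from the identification $\vol(\alpha)=\alpha^n$ for nef classes in Boucksom's work); you should cite it explicitly rather than state it as a bare fact. Your alternative ``equivalently'' route, factoring $g$ through blow-ups with smooth centres, is fine for $\pi\colon Z\to X$ (which you built that way) but need not apply to $q\colon Z\to Y$: an arbitrary modification between smooth compact K{\"a}hler manifolds is not in general a composition of smooth blow-ups. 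So that clause should be dropped or restricted to $\pi$, and the main argument retained for $q$.
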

Note that this is not true in the case of K{\"a}hler classes.
\subsubsection{Volume of big classes.}
Fix $\a\in H_{big}^{1,1}(X,\R)$. We introduce

\begin{defi}
\it{Let $T_{\min}$ a current with minimal singularities in $\a$  and let $\Omega$ a Zariski open set on which the potentials of $T_{\min}$ are locally bounded, then 
\begin{equation}\label{v}
\vol(\a):=\int_{\Omega} T_{\min}^n>0
\end{equation}
is called the volume of $\a$.}
\end{defi}
\indent Note that the Monge-Amp{\`e}re measure of $T_{\min}$
is well defined in $\Omega$ by \cite{bt} and that the volume is independent of the choice of $T_{\min}$ and $\Omega$ (\cite[Theorem 1.16]{begz}).
 
Let $f: X\rightarrow Y$ be a modification between compact K{\"a}hler manifolds and let $\ay\in H^{1,1}(Y,\R)$ be a big class. The volume is preserved by pull-backs, $$ \vol(f^{\star}\ay)=\vol(\ay) $$(see \cite{Bou1}), on the other hand, it is in general not preserved by push-forwards:
\begin{ex}\label{esempio}
Let $\pi: X\rightarrow \mathbb{P}^2$ be the blow-up along $\mathbb{P}^2$ at point $p$.	The class $\ax:=\{\pistar \omega_{FS}\}-\varepsilon \{E\}$ is K{\"a}hler whenever $0<\varepsilon<1$ and $\pistarb \ax=\{\omega_{FS}\}$. Now, $\vol(\ax)=1-{\varepsilon}^2$ while $\vol(\pistarb \ax)=1$.
\end{ex}

\subsection{Finite energy classes}
Fix $X$ a $n$-dimensional compact K{\"a}hler manifold, $\a\in H^{1,1}(X,\R)$ be a big class and $\theta\in \a$ a smooth representative.
\subsubsection{The non-pluripolar product}
Let us stress that since the non-pluripolar product does not charge pluripolar sets,
\begin{equation*}
\vol(\a)=\int_X\langle T_{\min}^n\rangle.
\end{equation*}

\begin{defi}\label{defi:fullMA} 
{\it A closed positive $(1,1)$-current $T$ on $X$ with cohomology class $\a$ is said to have \emph{full Monge-Amp{\`e}re mass} if
$$\int_X\langle T^n\rangle=\vol(\a).$$
We denote by $\cE(X,\a)$ the set of such currents.} 
{\it If $\f$ is a $\theta$-psh function such that $T=\theta+dd^c\f$. 
The \emph{non-pluripolar Monge-Amp{\`e}re measure} of $\f$ is
$$
\MA(\f):=\langle(\theta+dd^c\f)^n\rangle=\langle T^n\rangle.
$$
We will say that $\f$ has \emph{full Monge-Amp{\`e}re mass} if $\theta+dd^c\f$ has full Monge-Amp{\`e}re mass. We denote by $\cE(X,\theta)$ the set of corresponding functions.}
\end{defi}
Currents with full Monge-Amp{\`e}re mass have mild singularities.

\begin{pro}\label{lelong}
A closed positive $(1,1)$-current $T\in \cE(X,\a)$ has zero Lelong number at every point $x\in \Amp(\a)$.
\end{pro}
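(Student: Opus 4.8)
The plan is to fix $x_0 \in \Amp(\a)$ and argue by contradiction, supposing $\nu(T,x_0) = c > 0$ where $\nu$ denotes the Lelong number. First I would exploit the definition of the ample locus: there is a K\"ahler current $T' = \theta + dd^c \psi \in \a$ with analytic singularities that is smooth (in particular with bounded potential) near $x_0$, and dominates a K\"ahler form $\omega$. Using $T'$ one can localize: near $x_0$ the potential $\varphi$ of $T$ satisfies $\varphi \le \psi + O(1)$ is not automatic, so instead I would compare $T$ against the current with minimal singularities $\theta + dd^c V_\theta$, noting that $V_\theta$ is bounded near $x_0$ (since $x_0 \in \Amp(\a)$ forces the minimal singularities potential to be locally bounded there). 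The key point is that a positive Lelong number at $x_0$ means $\varphi(z) \le c \log|z - x_0| + O(1)$ in a coordinate ball $B \subset \Amp(\a)$.

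Next I would run the standard mass-loss estimate for the non-pluripolar product. On the ball $B$, compare the non-pluripolar Monge-Amp\`ere mass of $T$ with that of the less singular current $\theta + dd^c V_\theta$. Writing $u = \max(\varphi, V_\theta - k)$ for the canonical approximants and using that $\{u > V_\theta - k\}$ avoids a neighborhood of $x_0$ of definite size (because $\varphi$ has a genuine logarithmic pole there while $V_\theta$ is bounded), I would show that a fixed positive amount of mass — at least $c^n$ times the volume of a small ball, by the comparison with $(dd^c \log|z-x_0|)^n = $ Dirac mass, via the Demailly comparison/Chern-Levine-Nirenberg type inequalities — is lost in the limit defining $\langle T^n \rangle$. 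More precisely, the Lelong number $c > 0$ produces, through the standard inequality relating $\int_{B} \langle T^n \rangle$ over $B$ minus a small ball to the full local mass, a strict deficit: $\int_X \langle T^n \rangle \le \vol(\a) - \delta$ for some $\delta = \delta(c) > 0$, contradicting $T \in \cE(X,\a)$.

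Concretely, the mechanism is: on $\Amp(\a)$ one has the local inequality (Bedford-Taylor, \cite{bt1}) that for $\theta$-psh $\varphi$ with $\varphi \le V_\theta$,
$$\int_{\{V_\theta - \varphi < k\} \cap \Omega} \langle (\theta+dd^c\varphi)^n \rangle \le \int_{\{V_\theta - \varphi < k\}\cap\Omega}(\theta + dd^c \max(\varphi, V_\theta - k))^n,$$
and the right-hand side, by Stokes and the fact that the two potentials agree outside $\{V_\theta - \varphi < k\}$, is controlled by $\int_\Omega (\theta + dd^c V_\theta)^n$ \emph{minus} the mass that $\max(\varphi, V_\theta - k)$ concentrates near $x_0$. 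Since $\varphi$ has Lelong number $c$ at $x_0$ and $V_\theta$ is bounded near $x_0$, for every $k$ the function $\max(\varphi, V_\theta - k)$ still has Lelong number $\ge c$ at $x_0$ wait — that is false for $k$ fixed; rather one uses that $\varphi < V_\theta - k$ on a ball $B(x_0, r_k)$ with $r_k \not\to 0$...

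Let me restate the deficit cleanly. The hard part will be making the mass-deficit uniform in $k$: one must show the lost mass does not leak back as $k \to \infty$. The way around this is to not take $k \to \infty$ on the nose but to observe that $\langle T^n\rangle$ on $B$ is bounded above by $\liminf_k$ of the truncated masses, each of which misses at least the Lelong mass $c^n \varepsilon_n$ concentrated in an $x_0$-neighborhood that has definite size uniformly in $k$ — this uniformity comes precisely from the logarithmic pole $\varphi(z) \le c\log|z-x_0| + C$ being independent of $k$, so $\{V_\theta - k > \varphi\} \supset B(x_0, e^{-C'/c} \cdot e^{-k/c})$, whose radius does shrink, but the mass $\int_{B(x_0,r)}\langle(\theta+dd^c\varphi)^n\rangle$ of the pole is $\ge c^n$ on every such ball by the standard lower bound $\langle T^n\rangle \ge \nu(T,x_0)^n \delta_{x_0}$ (Demailly). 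Thus $\int_X \langle T^n \rangle \le \vol(\a) - c^n < \vol(\a)$, the desired contradiction. I would therefore structure the proof as: (i) reduce to a coordinate ball in $\Amp(\a)$ with $V_\theta$ bounded; (ii) invoke Demailly's lower bound $\langle T^n\rangle \ge \nu(T,x_0)^n\,\delta_{x_0}$; (iii) combine with the defining monotone approximation of $\langle T^n\rangle$ and the locality of the non-pluripolar product on $\Amp(\a)$ to get the strict mass inequality; (iv) conclude.
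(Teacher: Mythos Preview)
Your proposal has a genuine gap at step (ii): the inequality $\langle T^n\rangle \ge \nu(T,x_0)^n\,\delta_{x_0}$ is \emph{false} for the non-pluripolar product. Demailly's comparison inequality yields a Dirac lower bound for the local Bedford--Taylor Monge--Amp\`ere (when it is defined, e.g.\ for psh functions with an isolated unbounded point), but the whole point of the non-pluripolar construction is that it strips away mass on pluripolar sets; in particular $\langle T^n\rangle(\{x_0\}) = 0$ always. So you cannot produce the deficit $\int_X\langle T^n\rangle \le \vol(\a) - c^n$ this way. Your own hesitation in the middle (``wait --- that is false for $k$ fixed'') is pointing at the same problem: once you pass to the approximants $\f^{(k)}=\max(\f,V_\theta-k)$ there is no Lelong number left, and you have not supplied a uniform lower bound for $\int_{\{\f\le V_\theta-k\}}\MA(\f^{(k)})$ in $k$. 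For a general $\f$ with a positive Lelong number at $x_0$ this integral is not easy to control directly, because $\f$ may have complicated sublevel geometry away from $x_0$.

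The paper sidesteps this by not estimating the mass loss of $T$ itself. Instead it first builds, for any compact $K\subset\subset\Amp(\a)$, a potential $\f_C$ with minimal singularities that is smooth near $K$ (take $\f_C=\max(\f_0,\f_{\min}-C)$ with $\f_0$ a K\"ahler-current potential with analytic singularities, smooth on $\Amp(\a)$). Then it introduces an explicit comparison function $\psi_\e=\e\,\chi\log\|\cdot\|+\f_C$ with a single clean logarithmic pole at $x_0$. For this \emph{specific} function the mass on the sublevel set is computable: near $x_0$ one has $\max(\psi_\e,\f_C-k)=\max(\e\log\|z\|,-k)+\f_C$, whose Monge--Amp\`ere on the ball $\{\e\log\|z\|\le -k\}$ carries mass $\approx \e^n$ independently of $k$, so $\psi_\e\notin\cE(X,\theta)$. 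Finally, if $\nu(T,x_0)>0$ then for $\e$ small enough $\f\le\psi_\e+O(1)$, and the monotonicity of $\cE$ (\cite[Proposition~2.14]{begz}) forces $\f\notin\cE(X,\theta)$. The trick, in other words, is to replace $T$ by a less singular current whose mass loss you can compute by hand, and then descend by comparison.
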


\begin{proof}
This is an adaptation of \cite[Corollary 1.8]{GZ2}. Let us denote $\Omega= \Amp(\a)$. We claim that for any compact $K \subset\subset \Omega$ there exists a positive closed $(1,1)$-current $T_K\in\a$ with minimal singularities and such that it is a smooth K{\"a}hler form near $K$. Fix $\te$ a smooth form in $\a$ and $T_{\min}=\te+dd^c\f_{\min}$ a current with minimal singularities. By Demailly's regularization theorem \cite{Dem1}, in the big class $\a$ we can find a strictly positive current with analytic singularities $T_0=\te+dd^c\f_0$
that is smooth on $\Omega$. Then we define
$$\f_C:=\max(\f_0,\f_{\min}-C)$$
where $C>>1$. Clearly, 
$T_C=\te+dd^c\f_C$ is the current we were looking for.
\noindent For any point $x\in\Omega$, let $K=\overline{B(x,r)}$. Let $\chi$ be a smooth cut-off function on $X$ such that $\chi \equiv 1$ on $B(x,r)\subset K$ and $\chi \equiv 0$ on $X\setminus B(x,2r)$ where $r>0$ is small. Consider a local coordinates system in a neighbourhood of $x$ and define the $\te$-psh function $\psi_{\varepsilon}=\varepsilon \chi \log\|\cdot\|+\f_C $ for $\varepsilon$ small enough. Now, if $T=\te+dd^c \f$ has positive Lelong number at point $x$, then $\f\leq \psi_{\varepsilon}$. On the other hand $T_{\varepsilon}=\te+dd^c \psi_{\varepsilon}$ does not have full Monge-Amp{\`e}re mass since 
$$ \int_{\{\psi_{\varepsilon}\leq \f_C-k\}\cap B(x,r)} \MA(\psi_{\varepsilon}^{(k)}) $$
does not converge to $0$ as $k$ goes to $+\infty$, where $\psi_{\varepsilon}^{(k)}:=\max(\psi_{\varepsilon},\f_C-k)$ are the "canonical" approximants of $\psi_{\varepsilon}$ (\cite[p.229]{begz}). Therefore by \cite[Proposition 2.14]{begz}, it follows that $T\notin \cE(X,\a)$.
\end{proof}

We say that a positive closed $(1,1)$-current $T\in\a$ is pluripolar if it is supported by some closed pluripolar set: if $T=\te +dd^c \f$, $T$ is pluripolar implies that $\supp T\subset \{\f=-\infty\}$.
\begin{lem}\label{pluricurr}
For $j=1,...,p$, let $\a_j\in H^{1,1}(X,\R)$ be a big class and $T_j\in \a_j$. If $T_1$ is pluripolar then
$$\langle T_1\wedge...\wedge T_p\rangle=0.$$ 
\end{lem}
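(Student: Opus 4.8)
The plan is to reduce to the defining approximation of the non-pluripolar product and exploit that the approximants are truncated on sublevel sets of $\f_1$. Write $T_j=\te_j+dd^c\f_j$ and recall that by definition
$$\langle T_1\wedge\cdots\wedge T_p\rangle=\lim_{k\to\infty}\mathbf{1}_{\bigcap_j\{\f_j>V_{\te_j}-k\}}\,(\te_1+dd^c\max(\f_1,V_{\te_1}-k))\wedge\cdots\wedge(\te_p+dd^c\max(\f_p,V_{\te_p}-k)),$$
the limit being an increasing one. Since $T_1$ is pluripolar, $\supp T_1\subset\{\f_1=-\infty\}$, and we also have $\supp(\te_1+dd^c V_{\te_1})$ not contained in that set in general, so the point is really about the mass carried on $\{\f_1>V_{\te_1}-k\}$. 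The key observation is that on the open set $\{\f_1>V_{\te_1}-k\}$ one has $\max(\f_1,V_{\te_1}-k)=\f_1$, hence $\te_1+dd^c\max(\f_1,V_{\te_1}-k)=T_1$ there; and since $\f_1>-\infty$ on this set while $\supp T_1\subset\{\f_1=-\infty\}$, the restriction of $T_1$ to $\{\f_1>V_{\te_1}-k\}$ vanishes.

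Concretely, first I would fix $k$ and work on $U_k:=\bigcap_j\{\f_j>V_{\te_j}-k\}$, which is open (as a finite intersection of open sets, $\f_j-V_{\te_j}$ being quasi-psh hence upper semicontinuous but actually the relevant sets $\{\f_j>V_{\te_j}-k\}$ are the complements of closed sets). On $U_k$, the first factor $\te_1+dd^c\max(\f_1,V_{\te_1}-k)$ agrees with $T_1=\te_1+dd^c\f_1$ as currents, because $\f_1$ and $\max(\f_1,V_{\te_1}-k)$ coincide on the open set $\{\f_1>V_{\te_1}-k\}\supset U_k$ (two psh functions equal on an open set have equal $dd^c$ there). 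Now $T_1$ is a closed positive current supported in $\{\f_1=-\infty\}$, but $U_k\subset\{\f_1>-\infty\}$, so $\mathbf{1}_{U_k}T_1=0$. Therefore the product current $\mathbf{1}_{U_k}(\te_1+dd^c\max(\f_1,V_{\te_1}-k))\wedge\cdots$ is the wedge of $0$ with bounded (indeed locally bounded, since $\max(\f_j,V_{\te_j}-k)$ has the same local singularities as the locally bounded $V_{\te_j}$ on the locus where $\f_j>V_{\te_j}-k$) potentials, hence is $0$ as a current by the Bedford–Taylor continuity/locality of the wedge product with respect to a current that already vanishes as a measure. Passing to the increasing limit in $k$ gives $\langle T_1\wedge\cdots\wedge T_p\rangle=0$.

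The main obstacle I anticipate is making the step ``$\mathbf{1}_{U_k}T_1=0$ implies $\mathbf{1}_{U_k}(\te_1+dd^c\max(\f_1,V_{\te_1}-k))\wedge\cdots\wedge(\te_p+dd^c\max(\f_p,V_{\te_p}-k))=0$'' fully rigorous: one needs that the Bedford–Taylor product of locally bounded-potential factors is local and that a factor which restricts to the zero current on an open set kills the whole product there. This is standard (it follows from the definition of the Bedford–Taylor product via $dd^c(u\,(\te_2+dd^c\f_2)\wedge\cdots)$ and the fact that $u$ can be taken constant, namely we are pairing against the zero measure), but it is the place where care is required; alternatively one can argue directly that the truncated product places no mass on $\{\f_1=-\infty\}$ by Bedford–Taylor and places no mass on $\{\f_1>-\infty\}\cap U_k$ because $T_1$ itself does not, and $U_k$ is covered by these two sets. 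Everything else is bookkeeping with the definition of $\langle\cdot\rangle$ and the monotone convergence in $k$.
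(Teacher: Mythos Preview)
Your core idea---that on the truncation set the first factor reduces to $T_1$, which vanishes there---is exactly the paper's idea, but the paper's execution is cleaner and precisely sidesteps the ``main obstacle'' you flag. Two differences are worth noting.

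First, a small error: the sets $\{\f_j>V_{\te_j}-k\}$ are \emph{not} Euclidean open in general. The function $\f_j-V_{\te_j}$ is a difference of two usc functions and is neither usc nor lsc, so your parenthetical justification fails. These sets are only plurifine open, which is enough for the BEGZ machinery but means you cannot simply say ``two functions equal on an open set have equal $dd^c$ there'' in the classical sense.

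Second, and more importantly, the paper avoids your obstacle entirely by first restricting to the Euclidean open set $X\setminus A$ (where $A=\supp T_1$), adding a large multiple of a K{\"a}hler form to make all classes K{\"a}hler, and then working with local psh potentials $u_j$ on a small open $U\subset X\setminus A$. Since $T_1=0$ on $U$, the local potential $u_1$ is pluriharmonic, hence smooth and locally bounded. Thus for $k$ large one has $u_1>-k$ on $U$, so $dd^c\max(u_1,-k)=dd^c u_1=0$ \emph{identically as a current on $U$}. A Bedford--Taylor product with one factor equal to the zero current is trivially zero---no appeal to plurifine locality or to ``restriction of $T_1$ vanishing'' is needed. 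Combined with the fact that $\langle T_1\wedge\cdots\wedge T_p\rangle$ puts no mass on the pluripolar set $A$, this finishes the proof.

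So your plan is on the right track, but to make it rigorous you should adopt the paper's localization: pass to $X\setminus A$ first, where $\f_1$ becomes smooth, and work with genuinely bounded local potentials rather than the global truncations against $V_{\te_j}$.
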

\begin{proof}
First note that, since the non pluripolar product does not put mass on pluripolar sets, we have
$${\bf 1}_{X\setminus A} \,\langle T_1\wedge...\wedge T_n\rangle=\langle T_1\wedge...\wedge T_n\rangle $$
with $A$ the closed pluripolar set supporting $T_{1}$. Now, let $\omega$ be a K{\"a}hler form on $X$. In view of \cite[Proposition 1.14]{begz}, upon adding a large multiple of $\omega$ to the $T_j$'s we may assume that their cohomology classes are K{\"a}hler classes. We can thus find K{\"a}hler forms $\omega_j$ such that $T_j=\omega_j+dd^c\f_j$. Let $U$ be a small open subset of $X\setminus A$ on which $\omega_j=dd^c\psi_j$, where $\psi_j\le 0$ is a smooth psh function on $U$, so that $T_j=dd^cu_j$ on $U$. By definition on the plurifine open subset 
$$O_k:= \bigcap_j \{u_j>-k\} $$
we must have $ {\bf 1}_{O_k}\langle dd^c u_1\wedge...\wedge dd^c u_p\rangle ={\bf 1}_{O_k} \bigwedge _{j}dd^c\max{ (u_j,-k)}$.
Since $u_{1}$ is a smooth potential on $U$, $u_{1}>-k$ for $k$ big enough and furthermore, since $T_{1}$ is supported by $A$, we have that $dd^c u_{1}=0$. So, clearly
$${\bf 1}_{O_k} \bigwedge _{j}dd^c\max{ (u_j,-k)}=0$$
and hence the conclusion.
\end{proof}

\subsubsection{Weighted energy classes}
By a \emph{weight function}, we mean a smooth increa\-sing function $\chi:\R^-\to\R^-$ such that $\chi(0)=0$ and $\chi(-\infty)=-\infty$. We let
$$
{\mathcal W}^-:=\left\{ \chi:\R^- \rightarrow \R^- \, | \, 
\chi \text{ convex increasing, }
\chi(0)=0,\chi(-\infty)=-\infty
 \right\}
$$
and 
$$
{\mathcal W}^+:=\left\{ \chi:\R^- \rightarrow \R^- \, | \, 
\chi \text{ concave increasing, }
\chi(0)=0,\chi(-\infty)=-\infty
 \right\}
$$
denote the sets of convex/concave weights. We say that $\chi \in {\mathcal W}^+_M$ if $\exists M>0$
$$
0 \leq |t \chi'(t)| \leq M |\chi(t)| \qquad
\text{ for all } t \in \R^-.
$$

\begin{defi} 
{\it Let $\chi\in {\mathcal W}:= {\mathcal W}^-\cup {\mathcal W}^+$. We define the $\chi$-energy of a $\theta$-psh function $\f$ as 
$$
E_{\chi,\theta}(\f):=\frac{1}{n+1}\sum_{j=0}^n\int_X(-\chi)(\f-V_{\theta})\langle T^j\wedge \theta_{\min}^{n-j}\rangle \;\in\;]-\infty,+\infty]
$$
with $T=\theta+dd^c\f$ and $\theta_{\min}=\theta+dd^cV_{\theta}$. We set
$$\cE_{\chi}(X,\theta):= \{\f\in \cE(X,\theta) \;\,  | \;\, E_{\chi,\theta}(\f)<+\infty \}.$$
We denote by $\cE_{\chi}(X,\a)$ the set of positive currents in the class $\a$ whose global potential has finite $\chi$-energy. }
\end{defi} 

\noindent When $\chi\in{\mathcal W}^-$, \cite[Proposition 2.8]{begz} insures that the $\chi$-energy is non-increasing and for an arbitrary $\theta$-psh function $\f$, 
 $$
 E_{\chi,\theta}(\f):=\sup_{\psi\ge\f} E_{\chi,\theta}(\psi)\in]-\infty,+\infty]
 $$
over all $\psi\ge\f$ with minimal singularities. On the other hand, if $\chi \in{\mathcal W}^+_M$, we loose monotonicity of the $\chi$-energy function but it has been shown in \cite [p.465]{GZ2} that
$$\f\in \cE_{\chi}(X,\a)\qquad\textit{iff} \qquad \sup_{\psi\ge\f} E_{\chi,\theta}(\psi)<+\infty $$
over all $\psi$ with minimal singularities. Recall that for all weights $\chi\in {\mathcal W}^-, \tilde{\chi} \in {\mathcal W}^+$,
we have
$$
{\mathcal E}_{\tilde{\chi}}(X,\a) \subset {\mathcal E}^1(X,\a)
\subset {\mathcal E}_{\chi}(X,\a)
\subset {\mathcal E}(X,\a).
$$
For any $p>0$, we  use the notation
$$
{\cE}^p(X,\theta):=\cE_{\chi}(X,\theta),
\text{ when } \chi(t)=-(-t)^p.
$$

\section{Bimeromorphic images of energy classes}\label{due}
From now on $X$ and $Y$ denote arbitrary $n$-dimensional compact K{\"a}hler manifolds. We recall that  a bimeromorphic map $f:X-->Y$ can be decomposed as
\begin{displaymath}
\xymatrix{ & \Gamma  \ar[dl]_{\pi_1} \ar[dr]^{\pi_2} 
 \\
 X & &  Y }
\end{displaymath}
where $\pi_1,\pi_2$ are two holomorphic and bimeromorphic maps and $\Gamma$ denotes a desingularization of the graph of $f$. For any positive closed $(1,1)$-current $T$ on $X$ we set
$$f_{\star}T:= (\pi_2)_{\star}\,\pi_1^{\star} \,T.$$ For any positive closed $(p,p)$-current $S$ is not always possible to define the push forward under a bimeromorphic map. However we define $f_{\star}\langle S\rangle$ in the usual sense in the Zariski open set $V$ where $f:U\rightarrow V$ is a biholomorfism and extending to zero in $Y\setminus V$. 

\subsection{Bimeromorphic invariance of the non-pluripolar product}
The goal of this section is to show that the non pluripolar product is a bimeromorphic invariant.
\begin{thm}\label{bimer}
Let $f: X --> Y$ be a bimeromorphic map. Let $\a_1,\cdots , \a_p\in H^{1,1}(Y,\R)$ be big classes and fix $T_j$ be a positive closed $(1,1)$-current in $\a_j$. Then 
\begin{equation}\label{pb} 
 f_{\star}\langle T_1\wedge \cdots\wedge T_p\rangle=\langle f_{\star}T_1\wedge \cdots \wedge  f_{\star}T_p\rangle.
\end{equation} 
\end{thm}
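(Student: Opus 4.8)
The plan is to reduce the statement to the case of a \emph{modification}, i.e.\ a holomorphic bimeromorphic map, and then establish the identity locally over the ample locus. Since $f$ factors as $f = \pi_2 \circ \pi_1^{-1}$ through a desingularization $\Gamma$ of its graph, with $\pi_1,\pi_2: \Gamma \to X, Y$ modifications, and since $f_\star = (\pi_2)_\star \circ \pi_1^\star$, it suffices to prove two things: first, that $\pi^\star \langle T_1 \wedge \cdots \wedge T_p \rangle = \langle \pi^\star T_1 \wedge \cdots \wedge \pi^\star T_p \rangle$ for a modification $\pi$; second, that $\pi_\star \langle S_1 \wedge \cdots \wedge S_p \rangle = \langle \pi_\star S_1 \wedge \cdots \wedge \pi_\star S_p \rangle$ for a modification $\pi$ (where on the right $\pi_\star S_j$ is again a closed positive $(1,1)$-current, so this expression makes sense). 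Composing these two with $\pi = \pi_1$ and $\pi = \pi_2$ respectively gives \eqref{pb}. In fact the push-forward statement is equivalent to the pull-back statement: if $\pi:\Gamma\to Y$ is a modification and $R_j$ are positive currents on $Y$, then writing $R_j = \pi_\star(\pi^\star R_j)$ and applying the pull-back identity to the currents $\pi^\star R_j$ on $\Gamma$ reduces everything to showing $\pi_\star\langle \pi^\star R_1 \wedge \cdots \wedge \pi^\star R_p\rangle = \langle R_1 \wedge \cdots \wedge R_p\rangle$, which is the pull-back identity read backwards after applying $\pi_\star$.

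\textbf{Key steps.} First I would fix a modification $\pi: \hat X \to X$ and big classes $\a_j$ on $X$ with currents $T_j = \te_j + dd^c\f_j$. The non-pluripolar product is local and bimeromorphic invariance should be checked on the complement of a pluripolar (indeed analytic) set. Let $E \subset \hat X$ be the exceptional locus of $\pi$; then $\pi$ restricts to a biholomorphism $\hat X \setminus E \to X \setminus \pi(E)$, and $\pi(E)$ has codimension $\geq 2$ hence is pluripolar. Second, on $X \setminus \pi(E)$ the identity $\pi^\star \langle \bigwedge_j T_j\rangle = \langle \bigwedge_j \pi^\star T_j\rangle$ is trivial because $\pi$ is a local biholomorphism there and the non-pluripolar product is a local operator invariant under biholomorphisms. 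Third — and this is the crux — I must check that the current $\langle \bigwedge_j \pi^\star T_j \rangle$ puts no mass on $E$. Because the non-pluripolar product never charges pluripolar sets, and $E$ is a proper analytic subset hence pluripolar, this is automatic: $\mathbf 1_E \langle \bigwedge_j \pi^\star T_j\rangle = 0$. Fourth, the current $\pi^\star\langle \bigwedge_j T_j\rangle$ is defined by extension by zero across the (pluripolar, measure-zero for this current) set $\pi(E)$, so it too is determined by its restriction to $X \setminus \pi(E)$; pulling back under the biholomorphism and using that both sides agree there and neither charges the exceptional/indeterminacy loci yields the global equality of currents on $\hat X$. Finally, pushing forward: $(\pi_2)_\star$ of the equality on $\Gamma$, combined with the fact that $(\pi_2)_\star \pi_2^\star = \mathrm{id}$ on currents and $(\pi_2)_\star$ commutes with the non-pluripolar product in the same way (again because it is an isomorphism off pluripolar sets and the products don't charge them), gives \eqref{pb}.

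\textbf{Main obstacle.} The delicate point is not the biholomorphism part but making precise that \emph{both} sides of \eqref{pb} are genuinely insensitive to what happens over the exceptional and indeterminacy loci — one must be careful that $\pi^\star$ of a closed positive $(1,1)$-current is the honest pullback of potentials (so $\pi^\star T_j = \te_j\circ d\pi + dd^c(\f_j \circ \pi)$ and has a well-defined non-pluripolar product), that the defining increasing limits $\mathbf 1_{\bigcap\{\f_j > V_{\te_j} - k\}} \bigwedge_j (\te_j + dd^c\max(\f_j, V_{\te_j}-k))$ commute with $\pi^\star$ (they do, term by term, since $\max$, $dd^c$ and wedge products of locally bounded potentials all pull back correctly under a modification and $V_{\te_j}\circ\pi$ is $\pi^\star\te_j$-psh with minimal singularities), and that the monotone convergence is preserved. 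Once one has the commutation of the Bedford--Taylor-type approximants with $\pi^\star$ and the no-charge-on-pluripolar-sets property, the theorem falls out; the bookkeeping over $\Gamma$ is the only thing requiring genuine care, and the referee's assistance acknowledged in the introduction presumably concerned exactly this point.
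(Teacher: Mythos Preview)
Your proposal is correct in spirit and shares the paper's central observation: both sides of \eqref{pb} are determined by their restrictions to the Zariski open set where $f$ is a biholomorphism, because the non-pluripolar product never charges pluripolar sets. However, the paper's argument is considerably more direct than yours.

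The paper does \emph{not} factor through the graph $\Gamma$ at all in the proof. Recall that just before the theorem, $f_\star\langle S\rangle$ for a positive $(p,p)$-current is \emph{defined} as the ordinary push-forward on the biholomorphism locus $U\to V$, extended by zero on $Y\setminus V$. With this definition in hand, the paper simply restricts both sides of \eqref{pb} to $V$, where everything becomes a statement about a genuine biholomorphism. To handle the local computation cleanly, the paper adds a large multiple of a K{\"a}hler form $\omega$ to each $T_j$ (invoking \cite[Proposition~1.14]{begz}) so that each class becomes K{\"a}hler; then locally $T_j=dd^c u_j$ for honest psh functions $u_j$, and the approximants are just $\max(u_j,-k)$ rather than $\max(\f_j,V_{\theta_j}-k)$. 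This sidesteps entirely the need to verify that $V_{\theta_j}\circ\pi$ has minimal singularities.

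Your route --- splitting into $\pi_1^\star$ and $(\pi_2)_\star$ for modifications, and tracking the canonical approximants through a pull-back --- works, but it carries more bookkeeping: you must make sense of $\pi^\star$ on $(p,p)$-currents (your phrase ``defined by extension by zero across $\pi(E)$'' is slightly off; one pulls back on $\hat X\setminus E$ via the biholomorphism and then observes the result has no mass on $E$), and you must invoke the fact that $V_\theta\circ\pi$ has minimal singularities (true, by \cite[Proposition~1.12]{begz}, but unnecessary here). The paper's trick of adding a K{\"a}hler form and working with local plurisubharmonic potentials is what makes the argument a few lines long.
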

\begin{proof}
By definition of a bimeromorphic map, $f$ induces an isomorphism between Zariski open subsets  $U$ and $V$ of $X$ and $Y$, respectively. By construction the non-pluripolar product does not charge pluripolar sets, thus it is enough to check (\ref{pb}) on $V$. Since $f$ induces an isomorphism between $U$ and $V$ we have $$\left(f_{\star}\langle T_1\wedge \cdots\wedge T_p\rangle\right)|_V= f_{\star}\left(\langle T_1\wedge \cdots\wedge T_p\rangle|_U\right)=  f_{\star}\langle T_1|_U\wedge \cdots\wedge T_p|_U\rangle$$ and
$$\langle f_{\star}T_1\wedge \cdots \wedge  f_{\star}T_p\rangle|_V= \langle f_{\star}(T_1|_U)\wedge \cdots \wedge  f_{\star}(T_p|_U)\rangle.$$
Now, let $\omega$ be a K{\"a}hler form on $X$. Upon adding a multiple of $\omega$ to each $T_j$ we can assume that their cohomology classes are K{\"a}hler. Thus we can find K{\"a}hler forms $\omega_j$ such that $T_j=\omega_j+dd^c \f_j$. Fix $p\in U$ and take a small open set $B$ such that $p\in B\subset U$. In the open set $B$ we can write $\omega_j=dd^c \psi_j$ so that $T_j=dd^c u_j$ on $B$ with $u_j:=\psi_j+\f_j$. We infer that $$f_{\star} \langle \bigwedge_{j=1}^p dd^c u_j\rangle= \langle  f_{\star}(dd^c u_1)\wedge \cdots \wedge  f_{\star}(dd^c u_p)\rangle.$$ Indeed on the plurifine open subset $O_k:=\bigcap_j\{u_j>-k\}$ we have
\begin{eqnarray*}
f_{\star} \left( {\bf 1}_{O_k} \langle \bigwedge_j dd^c u_j\rangle \right)&=& f_{\star}\left( {\bf 1}_{O_k} \bigwedge_j dd^c \max(u_j,-k) \right)\\
&=&{\bf 1}_{\bigcap_j \{u_j\circ f^{-1}>-k \}}  \bigwedge_j f_{\star} (dd^c \max(u_j,-k) )
\end{eqnarray*}
where the last equality follows from the fact that for any positive $(1,1)$-current $S$ with locally bounded potential $(f_{\star}S)^n=f_{\star}(S^n)$.
\end{proof}

\subsection{ Condition (\texttt{V})}
Finite energy classes are in general not preserved by bimeromorphic maps (see Example \ref{esempio}). We introduce a natural condition to circumvent this problem.
\begin{defi}
\it{Fix $\a$ a big class on $X$. Let $\mathcal{T}_{\a}(X)$ denote the set of positive closed $(1,1)$-currents in $\a$. We say that Condition (\texttt{V}) is satisfied if 
\begin{equation*}
f_{\star} \Big( \mathcal{T}_{\a}(X)  \Big)= \mathcal{T}_{f_{\star} \a}(Y)
\end{equation*}
where $\mathcal{T}_{f_{\star} \a}(Y)$ is the set of positive currents in the image class $f_{\star} \a$.}
\end{defi}
Theorem A of the introduction is a consequence of Theorem \ref{bimer} and Proposition \ref{fullmass}.
\begin{pro}\label{fullmass}
Fix $\a\in H^{1,1}_{big}(X,\R)$. If Condition (\texttt{V}) holds,
then 
\begin{itemize}
\item[(i)]$\vol(\a)=\vol(f_{\star}\a)$,
\item[(ii)] $f_{\star}(\cE(X,\a))= \cE(Y,f_{\star}\a)$,
\item[(iii)]  $f_{\star}(\cE_{\chi}(X,\a))= \cE_{\chi}(Y,f_{\star}\a)$ for any weigth $\chi\in {\mathcal W}^-\cup {\mathcal W}^+_M$.
\end{itemize}
\end{pro}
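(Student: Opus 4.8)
The plan is to prove Proposition \ref{fullmass} by reducing everything to the biholomorphism $f:U\to V$ between Zariski open subsets and invoking Theorem \ref{bimer}. The three statements should be proved in order, since (ii) will use (i) and (iii) will use (i) and (ii).

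For (i), the key observation is that Condition (\texttt{V}) forces the push-forward of a current $T_{\min}$ with minimal singularities in $\a$ to be a current with minimal singularities in $f_\star\a$. Indeed, if $T\in\mathcal{T}_\a(X)$ is arbitrary, then $f_\star T\in\mathcal{T}_{f_\star\a}(Y)$, and conversely every $S\in\mathcal{T}_{f_\star\a}(Y)$ is of the form $f_\star T$; comparing local potentials on $V$ (where $f$ is a biholomorphism, so singularities are transported exactly) shows $f_\star T_{\min}$ is less singular than every element of $\mathcal{T}_{f_\star\a}(Y)$, hence has minimal singularities. Then, letting $\Omega\subset X$ be a Zariski open set on which the potentials of $T_{\min}$ are bounded, $f(\Omega\cap U)$ is a Zariski open set on which the potentials of $f_\star T_{\min}$ are bounded, and since $f$ is a biholomorphism there, $\vol(\a)=\int_{\Omega\cap U}\langle T_{\min}^n\rangle=\int_{f(\Omega\cap U)}\langle (f_\star T_{\min})^n\rangle=\vol(f_\star\a)$, using Theorem \ref{bimer} (or simply that the non-pluripolar product does not charge the complement of $U$, resp. $V$).

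For (ii), take $T\in\cE(X,\a)$. By Condition (\texttt{V}), $f_\star T\in\mathcal{T}_{f_\star\a}(Y)$, and by Theorem \ref{bimer}, $f_\star\langle T^n\rangle=\langle (f_\star T)^n\rangle$. Since the non-pluripolar product does not charge pluripolar sets (in particular $Y\setminus V$), $\int_Y\langle (f_\star T)^n\rangle=\int_V\langle (f_\star T)^n\rangle=\int_U\langle T^n\rangle=\int_X\langle T^n\rangle=\vol(\a)=\vol(f_\star\a)$ by (i), so $f_\star T\in\cE(Y,f_\star\a)$; this gives $f_\star(\cE(X,\a))\subset\cE(Y,f_\star\a)$. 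For the reverse inclusion, apply the same argument to the inverse bimeromorphic map $f^{-1}:Y-->X$ — note that Condition (\texttt{V}) is symmetric, because $f_\star\circ (f^{-1})_\star=\id$ on currents restricted to the common Zariski open set and $(f^{-1})_\star(\mathcal{T}_{f_\star\a}(Y))=\mathcal{T}_\a(X)$ follows by applying $f_\star$ to both sides of the hypothesis.

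For (iii), the strategy is identical once one checks that each term in the weighted energy functional is preserved. Write $T=\theta+dd^c\f$ with $\f$ a $\theta$-psh potential, and note that on $U$ the function $\f-V_{\theta_\a}$ is transported by the biholomorphism to $(f_\star\f)-V_{\theta_{f_\star\a}}$ up to a bounded error coming from comparing the two minimal-singularity potentials; but by the proof of (i), $f_\star V_{\theta_\a}$ has minimal singularities in $f_\star\a$, so $|f_\star V_{\theta_\a}-V_{\theta_{f_\star\a}}|$ is bounded, and since $\chi$ is continuous with $\chi(0)=0$ this bounded shift does not affect finiteness of the energy. Then for each $j$, by Theorem \ref{bimer} applied to the mixed product, $f_\star\langle T^j\wedge\theta_{\a,\min}^{n-j}\rangle=\langle (f_\star T)^j\wedge\theta_{f_\star\a,\min}^{n-j}\rangle$ up to controlling the difference between $f_\star\theta_{\a,\min}$ and $\theta_{f_\star\a,\min}$, and the integrals $\int_X(-\chi)(\f-V_{\theta_\a})\langle T^j\wedge\theta_{\a,\min}^{n-j}\rangle$ match on $U\cong V$. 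Hence $E_{\chi,\theta_\a}(\f)<+\infty$ iff $E_{\chi,\theta_{f_\star\a}}(f_\star\f)<+\infty$, giving $f_\star(\cE_\chi(X,\a))=\cE_\chi(Y,f_\star\a)$ by the same symmetry argument as in (ii).

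I expect the main obstacle to be the bookkeeping in (iii): one must carefully argue that replacing $V_{\theta_\a}$ by $f_\star V_{\theta_\a}$ (rather than the genuine $V_{\theta_{f_\star\a}}$) and $\theta_{\a,\min}$ by its push-forward introduces only bounded, hence harmless, discrepancies — this is exactly where Condition (\texttt{V}) is used in an essential way, via the fact it guarantees push-forwards of minimal-singularity currents remain minimal. For $\chi\in{\mathcal W}^+_M$, where monotonicity of the energy fails, one should instead use the characterization recalled in the text that $\f\in\cE_\chi$ iff $\sup_{\psi\ge\f}E_{\chi,\theta}(\psi)<+\infty$ over $\psi$ with minimal singularities, and check that $f_\star$ sets up a bijection between such $\psi$'s on $X$ and on $Y$ preserving the energy up to bounded error; for $\chi\in{\mathcal W}^-$ the analogous supremum characterization works directly.
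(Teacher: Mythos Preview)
Your proposal is correct and follows essentially the same route as the paper: Condition (\texttt{V}) guarantees that $f_\star T_{\min}$ has minimal singularities, whence (i) via Theorem \ref{bimer}; then (ii) by pushing forward the non-pluripolar mass and invoking (i); then (iii) by transporting the energy integrands through the biholomorphism $U\cong V$.

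The only presentational difference worth noting is in (iii). The paper works directly with the canonical approximants $\f^{k}:=\max(\f,V_\theta-k)$, which have minimal singularities, and simply rewrites each integral $\int_X(-\chi)(\f^{k}-V_\theta)\langle T_k^j\wedge\theta_{\min}^{n-j}\rangle$ on $V$ via change of variables, using $f_\star\theta_{\min}$ itself as the reference minimal current on $Y$. This sidesteps the bounded-error bookkeeping you describe, since one never needs to compare $f_\star V_{\theta_\a}$ with the canonical $V_{\theta_{f_\star\a}}$: any current with minimal singularities serves equally well in the definition of the energy class. Your approach via the $\sup_{\psi\ge\f}$ characterization and explicit control of the bounded discrepancy is equally valid and arguably more transparent about where Condition (\texttt{V}) enters, but the paper's use of the $\f^{k}$ is slightly more economical.
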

Observe that in general $\vol(\a)\leq\vol(f_{\star}\a)$ (see Example \ref{esempio}).
\begin{proof}
Fix $T_{\min}$ a current with minimal singularities in $\a$. Observe that Condition (\texttt{V}) implies that $f_{\star}T_{\min}$ is still a current with minimal singularities, thus 
$$
\vol(\a)=\int_X \langle T_{\min}^n\rangle= \int_{Y}\langle (f_{\star} T_{\min})^n\rangle= \vol(f_{\star}\a).
$$
Fix $T\in\mathcal{T}_{\a}(X)$. Using Theorem \ref{bimer}, the change of variables formula and the fact that the pluripolar product does not put mass on analytic sets we get
$$\int_X \langle T^n\rangle =\int_Y \langle(f_{\star} T)^n\rangle $$
hence by $(i)$ it follows that
$$T\in \cE(X,\a) \Longleftrightarrow f_{\star} T \in  \cE(Y,f_{\star}\a).$$
We now want to prove $(iii)$. Let $T=\theta+dd^c\f$ and $T_k=\theta+dd^c\f^{k}$ where $\f^{k}=\max(\f, V_\theta-k)$ are the canonical approximant (note they have minimal singularities and decrease to $\f$). We recall that $f$ induces an isomorphism between Zariski opens subsets $U$ and $V$, thus by $(ii)$ and the change of variables we get that for any $j=0,\cdots,n$
\begin{eqnarray*}
&&\int_X (-\chi)(\f^{k}-V_{\theta})\langle T_{k}^j\wedge {\theta_{\min}}^{n-j}\rangle = \int_U (-\chi)(\f-V_{\theta})\langle T_{k}^j\wedge \theta_{\min}^{n-j}\rangle\\
&&= \int_V (-\chi)(\f^{k}\circ f^{-1}-V_{\theta}\circ f^{-1})\langle (f_{\star}T_k)^j\wedge(f_{\star}\theta_{\min})^{n-j} \rangle
\end{eqnarray*}
hence the conclusion.
\end{proof}
Condition (\texttt{V}) is easy to understand when $f$ is a blow up with smooth center:

\begin{rem}\label{blow}
Let $\pi : X\rightarrow Y$ be a blow up with smooth center $\cZ$, let $E=\pi^{-1}(\cZ)$ be the exceptional divisor and fix a big class $\ax$ on $X$. There exists a unique $\gamma\in \R$ such that at the level of cohomology classes $\ax=\pistar \pistarb \ax+\gamma \{E\}$. Furthermore, for any $(1,1)$-current $S\in \ax$ there exists a $(1,1)$-current $T\in \pistarb \ax$ such that $S=\pistar T+ \gamma [E]$ and $S$ is positive iff $T$ is positive and $\gamma\geq -\nu(T,\cZ)$ (consequence of Proposition 8.16 in \cite{Dem} together with Corollary 1.1.8 in \cite{b}). If Condition (\texttt{V}) holds,
then any current $S_{\min}$ with minimal singularities in $ \ax$ admits the following decomposition 
$$S_{\min}= \pistar T_{\min}+\gamma[E]$$
where $T_{\min}$ is a current with minimal singularities in $\pistarb \ax$. 
When $\gamma\geq 0$, Condition (\texttt{V}) is always satisfied. On the other side, when $\gamma<0$ this is not necessarily the case since it could happen that for some positive current $T$ in $\pistarb \ax$, $\nu(T,\cZ)<-\gamma$ (see Example \ref{esempio} where $\gamma=-\varepsilon$ and $\nu(\omega_{FS},\cZ)=0$).\\
We observe indeed that Condition (\texttt{V}) is equivalent to require that every current $T_Y\in \pistarb \ax$ is such that $\nu(T_Y,\cZ)\geq -\gamma$.\\

As the first statement of Proposition \ref{fullmass} shows, there is a link between Condition (\texttt{V}) and the invariance of the volume under push forward. For example, if $\cZ\not\subseteq X\setminus\Amp(\pistarb \ax)$ then 
$$\vol(\ax)=\vol(\pistarb \ax) \Longleftrightarrow\pistarb \Big( \mathcal{T}_{\ax}(X)  \Big)= \mathcal{T}_{\pistarb \ax}(Y) .$$ Indeed $(\Longrightarrow)$ is an easy consequence of the fact that under the assumption on the volumes we can decompose any current with minimal singularities $S_{\min}\in\ax$ as $S_{\min}=\pistar T+\gamma [E]$ whith $T\in \cE(Y,\pistarb \ax)$. Proposition \ref{lelong} implies $\nu(T,\cZ)=0$, hence $\gamma\geq 0$. Let us stress that the assumption on $\cZ$ could be removed if we knew that $\nu(T,y)=\nu(T_{\min},y)$ for any $T$ with full Monge-Amp{\`e}re mass, for any $T_{\min}$ with minimal singularities in $\pistarb \ax$ and for any $y\in Y$. It is however quite delicate to get such information at points $y$ which lie outside the ample locus.
\end{rem}

\begin{pro}\label{deczar}
Let $f: X--->Y$ a bimeromorphic map between compact K{\"a}hler manifold of complex dimension $2$. Then the following are equivalent:
\begin{itemize}
\item[(i)] $\vol(\a)=\vol(f_{\star} \a)$
\item[(ii)] $f_{\star}\Big( \mathcal{T}_{\a}(X)  \Big)= \mathcal{T}_{f_{\star} \a}(Y)$.
\end{itemize}
\end{pro}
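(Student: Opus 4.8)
The plan is to prove the two implications separately, using that in complex dimension $2$ big classes admit a (divisorial) Zariski decomposition. The implication $(ii)\Rightarrow(i)$ is already essentially contained in the first statement of Proposition \ref{fullmass}: Condition (\texttt{V}) forces the push-forward of a current with minimal singularities in $\a$ to be a current with minimal singularities in $f_\star\a$, and since $f$ is a biholomorphism off analytic sets and the non-pluripolar product ignores pluripolar sets, $\vol(\a)=\int_X\langle T_{\min}^n\rangle=\int_Y\langle (f_\star T_{\min})^n\rangle=\vol(f_\star\a)$. So the real content is the reverse implication $(i)\Rightarrow(ii)$.

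For $(i)\Rightarrow(ii)$, first reduce to the case of a single blow-up with (smooth, hence in dimension $2$ a point) center. Indeed, decompose $f$ through a desingularized graph $\Gamma$ with $\pi_1:\Gamma\to X$, $\pi_2:\Gamma\to Y$, each a composition of point blow-ups; since the volume is preserved by pull-back (always) and by $\pi_1,\pi_2$ individually under hypothesis $(i)$ (volume is monotone under push-forward, Proposition \ref{fullmass}, and the three volumes are sandwiched), and since Condition (\texttt{V}) is clearly stable under composition, it suffices to treat $f=\pi:X\to Y$ a single blow-up at a point $p$, with exceptional divisor $E$ and $\a=\pi^\star\pi_\star\a+\gamma\{E\}$. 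If $\gamma\ge 0$ then Condition (\texttt{V}) holds automatically (Remark \ref{blow}), so assume $\gamma<0$. As noted in Remark \ref{blow}, Condition (\texttt{V}) is then equivalent to the assertion that every positive current $T_Y\in\pi_\star\a=:\b$ satisfies $\nu(T_Y,p)\ge-\gamma$. So I must show: if some positive $T_Y\in\b$ has $\nu(T_Y,p)<-\gamma$, then $\vol(\pi^\star\b+\gamma\{E\})<\vol(\b)$.

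Here is where the Zariski decomposition enters. In dimension $2$, write the divisorial Zariski decomposition $\b=P(\b)+N(\b)$ with $P(\b)$ nef ("positive part"), $N(\b)$ an effective $\R$-divisor ("negative part"), and $\vol(\b)=P(\b)^2$; moreover the negative part is, up to normalization, realized by the minimal singularities current — $V_{\theta_\b}$ has divisorial singularities exactly along $N(\b)$, so $\nu(T_{\min,\b},p)=\mathrm{mult}_p N(\b)$. The key point: $\nu(T_Y,p)<-\gamma$ for \emph{some} positive current is equivalent to $\nu(T_{\min,\b},p)=\mathrm{mult}_p N(\b)<-\gamma$, since the minimal-singularities current has the smallest Lelong numbers. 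Thus Condition (\texttt{V}) fails $\iff \mathrm{mult}_p N(\b)<-\gamma$. Now compute $\vol(\a)=\vol(\pi^\star\b+\gamma\{E\})$: a current with minimal singularities in $\a$ is $\pi^\star T_{\min,\b}+\gamma[E]+(\text{correction along }E)$; more cleanly, the Zariski decomposition of $\a$ is $\pi^\star P(\b)+(\pi^\star N(\b)+\gamma\{E\})$ when $\gamma+\mathrm{mult}_p N(\b)\ge 0$ — and then $\vol(\a)=(\pi^\star P(\b))^2=P(\b)^2=\vol(\b)$. But when $\gamma+\mathrm{mult}_p N(\b)<0$, the class $\pi^\star N(\b)+\gamma\{E\}$ is no longer effective, the Zariski decomposition of $\a$ has a strictly larger negative part (one must add more of $E$), the positive part $P(\a)$ satisfies $P(\a)^2=P(\b)^2-(\text{positive correction involving }(\gamma+\mathrm{mult}_pN(\b))^2)$, and hence $\vol(\a)<\vol(\b)$. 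Chasing this through, $\vol(\a)=\vol(\b)\iff\gamma+\mathrm{mult}_pN(\b)\ge0\iff$ Condition (\texttt{V}), which is exactly $(i)\iff(ii)$.

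The main obstacle I anticipate is the precise bookkeeping of the Zariski decomposition under a point blow-up — specifically, verifying that the negative part of $\pi^\star\b+\gamma\{E\}$ is $\pi^\star N(\b)+\max(0,-\gamma-\mathrm{mult}_pN(\b))$-fudge along $E$ together with $\pi^\star$ of pieces of $N(\b)$ passing through $p$, and computing the resulting drop in the self-intersection of the positive part. One has to be careful that $P(\b)$ need not meet $E$'s image transversally and that components of $N(\b)$ through $p$ get their strict transforms plus multiplicities of $E$; the cleanest route is probably to argue at the level of currents with minimal singularities and Lelong numbers rather than divisor classes, using Proposition \ref{lelong} (a full-mass current has zero Lelong number on the ample locus) to pin down that $\nu(T,p)=\nu(T_{\min},p)$ for the relevant currents when $p$ lies in $\Amp(\b)$, and invoking the existence of Zariski decomposition only to handle the case $p\notin\Amp(\b)$, where $\mathrm{mult}_pN(\b)>0$. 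Assembling these cases into a single clean equivalence is the delicate part.
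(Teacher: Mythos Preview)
Your reduction to a single point blow-up and the reformulation via Remark~\ref{blow} (Condition~(\texttt{V}) fails iff $\nu(T_{\min,\b},p)<-\gamma$, where $\b=\pi_\star\a$) are correct and match the paper. The divergence is in how you close the argument.

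You try to compute the Zariski decomposition of $\a=\pi^\star\b+\gamma\{E\}$ upstairs \emph{in terms of} the Zariski decomposition $\b=P(\b)+N(\b)$ downstairs, and then read off a volume drop when $\gamma+\mathrm{mult}_pN(\b)<0$. The assertion that in this regime $P(\a)^2=P(\b)^2-(\text{positive correction})$ is the heart of the matter, and you have not proved it; you only state that ``one must add more of $E$'' and that the correction is positive. This is exactly the bookkeeping you flag as the obstacle, and it is genuinely delicate: when components of $N(\b)$ pass through $p$, or when $P(\b)$ fails to be ample near $p$, adjusting the coefficient of $E$ can force further adjustments of the strict transforms, and the resulting self-intersection formula is not simply $P(\b)^2-(\gamma+\mathrm{mult}_pN(\b))^2$. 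So as written the decisive step is a plausible sketch, not a proof.

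The paper avoids this computation entirely. It stays upstairs on $X$: take $S$ minimal in $\a$ and $T$ minimal in $\b$, so $\pi^\star T$ is minimal in $\pi^\star\b$ and cohomologous to $S-\gamma[E]$. Writing out the Siu (hence Zariski) decompositions $S=\theta+\sum\lambda_i[D_i]+\lambda_0[E]$ and $\pi^\star T=\tau+\sum\eta_i[D_i]+\eta_0[E]$ with $\{\theta\},\{\tau\}$ big and nef, minimality of $\pi^\star T$ gives $\rho_i:=\lambda_i-\eta_i\ge0$ and $\rho_0:=\lambda_0-\gamma-\eta_0\ge0$, whence $\{\theta\}+\{A\}=\{\tau\}$ with $A=\sum\rho_i D_i+\rho_0 E$ effective. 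The hypothesis $\vol(\a)=\vol(\b)$ reads $\{\theta\}^2=\{\tau\}^2$; intersecting $\{\theta\}+\{A\}=\{\tau\}$ with $\{\theta\}$ and with $\{\tau\}$ forces $\{\theta\}\cdot\{A\}=\{\tau\}\cdot\{A\}=0$, then squaring gives $\{A\}^2=0$, and the Hodge index theorem (with $\{\theta\}^2>0$) yields $\{A\}=0$, so $A=0$ since $A$ is effective. In particular $\rho_0=0$, i.e.\ $\lambda_0=\eta_0+\gamma=\nu(T,p)+\gamma\ge0$, which is exactly Condition~(\texttt{V}). This Hodge-index trick replaces your explicit Zariski bookkeeping with a two-line intersection computation; I recommend you adopt it for the step you left open.
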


\begin{proof}
Let us recall that $(ii)$ always implies $(i)$. Furthermore by Noether's factorization theorem it suffices to consider the case of a blow-up at one point $p$. We write $\a=\pistar \pistarb \a+\gamma \{E\}$. We recall that if $\gamma\geq 0$ there is nothing to prove, we can thus assume $\gamma<0$. Let $S$ be a current with minimal singularities representing $\alpha$ and $T$ a current with minimal singularities representing $\pistarb \alpha$. By \cite[Proposition 1.12]{begz}, $\pi^*T\in \pistar \pistarb \a$ is also with minimal singularities. Note that $\pistar T$ is cohomologous to $S- \gamma [E]$. Since $\alpha$ is big, the Siu decomposition of $S$ gives in cohomology the Zariski decomposition of $\alpha$, and similarly the Siu decomposition of $\pistar T$  gives the Zariski decomposition of $\pistar \pistarb \a$ (see e.g. \cite{Bou2}).  Furthermore, since $\pistar T$ is minimal every divisor appearing in the singular part of the Siu  decomposition of $\pistar T$ also appears in the singular part of the Siu  decomposition of $S -\gamma [E]$ with larger or equal coefficients. Then we write the Siu decomposition of $S$ and of $\pistar T$ as  
$$ S = \theta + \sum_{i=1}^N \lambda_i [D_i] + \lambda_0 [E],\quad \pistar T = \tau  + \sum_{i=1}^N \eta_i [D_i] + \eta_0 [E]  $$
with $D_i \neq E$ for all $ i $,  $\lambda_i > 0 $, $ \lambda_0, \eta_i,\eta_0 \geq 0$, where in particular $\eta_0=\nu(\pistar T, E)=\nu(T,p)$. Moreover $\{\theta\}, \{\tau\}$ are big and nef classes and $\rho_i = \lambda_i - \eta_i \geq 0$, $ \rho_0 = \lambda_0-\gamma- \eta_0 \geq 0$. It follows that 
\begin{equation}\label{cohom}
\{\theta+A\} =\{\tau\}  
\end{equation}
where $ A = \sum_{i=1}^N \rho_i [D_i] + \rho_0 [E]$ is an effective $\R$ divisor. Observe that if we show $\rho_0=0$ then $\lambda_0=\eta_0+\gamma=\nu(T,p)+\gamma \geq 0$ and so we are done. Intersecting first with $\theta$ and then with $\tau$ the relation (\ref{cohom}), using the assumption on the volumes, i.e. $\{\theta\}^2 = \{\tau\}^2$, the fact that $A$ is effective, and  that $\tau$ and $\theta$ are nef, we find  
$\{\tau\} \cdot \{A\}= \{\theta\}\cdot \{A\} = 0.  $   
If we develop the square of the left hand side of (\ref{cohom}) we conclude $\{A\}^2 = 0.$ Since $\{\theta\}^2> 0$, the Hodge index theorem shows that $\{ A \} = 0$ and since $A$ is effective, it is the zero divisor. Hence $\rho_0 = 0$. 
\end{proof}
We expect that $\nu(T,x)=\nu(T_{\min},x)$ for all $x\in X$ whenever $T\in \cE(X,\a)$. We show the following partial result in this direction:
\begin{pro}
Let $X$ be a compact K{\"a}hler surface, $\a$ be a big class on $X$ and $T\in \cE(X,\a)$. Then the set $\{x\,|\; \nu(T,x)>\nu(T_{\min},x)\}$ is at most countable.
\end{pro}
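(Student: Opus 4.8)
The plan is to confine the set $B:=\{x\in X\;:\;\nu(T,x)>\nu(T_{\min},x)\}$ to the complement of the ample locus and then to control it on each irreducible curve lying there. Since $T\in\cE(X,\a)$, Proposition \ref{lelong} gives $\nu(T,x)=0$ for every $x\in\Amp(\a)$, whereas $\nu(T_{\min},x)\ge0$ always; hence $B\subseteq X\setminus\Amp(\a)$. As $\a$ is big, $\Amp(\a)$ is a non-empty Zariski open subset, so $X\setminus\Amp(\a)$ is a proper analytic subset of the surface $X$: a finite union $C_1\cup\dots\cup C_r$ of irreducible curves together with finitely many points. It is therefore enough to show that $B\cap C$ is at most countable for each irreducible curve $C\subseteq X\setminus\Amp(\a)$.

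Fix such a $C$ and let $\gamma:=\nu(T,C)=\min_{x\in C}\nu(T,x)$ be the generic Lelong number of $T$ along $C$. I would first record that, by Siu's analyticity theorem, the super-level sets $E_c(T)=\{x:\nu(T,x)\ge c\}$ are analytic; for each rational $c>\gamma$ the set $E_c(T)\cap C$ is then a \emph{proper} analytic subset of $C$, hence finite, so that $\{x\in C:\nu(T,x)>\gamma\}=\bigcup_{c\in\Q,\,c>\gamma}(E_c(T)\cap C)$ is at most countable. The second, and main, step is to prove that
\begin{equation}\label{e:claim}
\gamma=\nu(T,C)\ \le\ \nu(T_{\min},C).
\end{equation}
Granting \eqref{e:claim} the proof is finished: if $x\in C$ has $\nu(T,x)=\gamma$, then $\nu(T,x)=\gamma\le\nu(T_{\min},C)=\min_{y\in C}\nu(T_{\min},y)\le\nu(T_{\min},x)$, so $x\notin B$; hence $B\cap C\subseteq\{x\in C:\nu(T,x)>\gamma\}$ is at most countable, and summing over the finitely many $C_i$ (and adjoining the finitely many points of $X\setminus\Amp(\a)$) yields the statement.

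To prove \eqref{e:claim} — this is where $\dim_{\C}X=2$ enters — observe that by Siu's decomposition $T-\gamma[C]$ is again a closed positive $(1,1)$-current, representing $\a-\gamma\{C\}$; since $T$ and $T-\gamma[C]$ coincide on $X\setminus C$ while neither non-pluripolar product charges the pluripolar curve $C$, one gets $\int_X\langle(T-\gamma[C])^2\rangle=\int_X\langle T^2\rangle=\vol(\a)$, whence $\vol(\a-\gamma\{C\})\ge\vol(\a)$. Combined with $\vol(\a-\gamma\{C\})\le\vol(\a)$ (as $\gamma\{C\}$ is pseudoeffective) this gives $\vol(\a-\gamma\{C\})=\vol(\a)>0$, so $\a-\gamma\{C\}$ is big. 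Now compare the divisorial Zariski decompositions $\a=\{P\}+\{N\}$ and $\a-\gamma\{C\}=\{P'\}+\{N'\}$ on the surface $X$, with $P,P'$ nef and big, $N,N'\ge0$ effective divisors such that $P$ (resp. $P'$) has zero intersection with every component of $N$ (resp. $N'$), and $P^2=\vol(\a)=\vol(\a-\gamma\{C\})=(P')^2=:v$ (see \cite{Bou2}). From $P'=\a-\gamma\{C\}-\{N'\}$ and $N\cdot P=0$ one computes $P\cdot P'=\a\cdot P-\gamma\,(C\cdot P)-(N'\cdot P)=v-\gamma\,(C\cdot P)-(N'\cdot P)$, while the Hodge index (Cauchy--Schwarz) inequality for the nef classes $P,P'$ gives $P\cdot P'\ge\sqrt{P^2(P')^2}=v$; as $C\cdot P\ge0$ and $N'\cdot P\ge0$, this forces $C\cdot P=N'\cdot P=0$ and $P\cdot P'=v$. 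Hence $(P-P')^2=2v-2\,(P\cdot P')=0$ and $(P-P')\cdot P=0$, so the Hodge index theorem (recall $P^2=v>0$) yields $P'=P$ as cohomology classes, and therefore $\{N'\}=\{N\}-\gamma\{C\}$. Finally, $C$ and all components of $N$ and $N'$ are orthogonal to the big nef class $P$, hence lie in the negative definite subspace $P^\perp$, so these (finitely many) curve classes are linearly independent; the identity of classes $\{N'\}=\{N\}-\gamma\{C\}$ is thus an identity of divisors $N'=N-\gamma C$, and since $N'\ge0$ we conclude $N\ge\gamma\,C$, i.e. the coefficient of $C$ in $N=N(\a)$ is at least $\gamma$. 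As $\nu(T_{\min},C)=\inf\{\nu(S,C):S\in\a\text{ closed positive}\}$ is precisely that coefficient, \eqref{e:claim} follows.

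The main obstacle is \eqref{e:claim}: one must rule out that a current of full Monge-Amp{\`e}re mass carries, along some curve, a generic Lelong number strictly larger than that of a current with minimal singularities. The remaining ingredients are soft (Siu's analyticity theorem, the structure of analytic subsets of a surface, Proposition \ref{lelong}), but \eqref{e:claim} genuinely uses that $X$ is a surface, through the existence of the Zariski decomposition and the Hodge index theorem; one must also be a little careful passing from the equality of cohomology classes $\{N'\}=\{N\}-\gamma\{C\}$ to the corresponding equality of divisors, which is exactly what the linear independence of the $P$-contracted curve classes provides.
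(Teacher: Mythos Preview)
Your proof is correct and rests on the same three pillars as the paper's argument---preservation of non-pluripolar mass when stripping off a divisorial part, the divisorial Zariski decomposition on surfaces, and the Hodge index theorem---but the organization is genuinely different. The paper takes the full Siu decomposition $T=R+\sum_i\lambda_i[D_i]$ at once, observes that $\{R\}$ is big and nef with $\{R\}^2=\vol(\a)$ (because $R$ has no generic Lelong numbers along curves and $\langle R^2\rangle=\langle T^2\rangle$), and then compares $\{R\}$ with the positive part $\a_1$ of the Zariski decomposition of $\a$: writing $\{R\}+\{A\}=\a_1$ with $A=\sum_i(\lambda_i-\eta_i)[D_i]$ effective and $\{R\}^2=\a_1^2$, a single Hodge-index step (as in Proposition~\ref{deczar}) gives $\{A\}=0$, hence $A=0$ and $\nu(T,D_i)=\nu(T_{\min},D_i)$ for all $i$ simultaneously. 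You instead work curve by curve, pass to $\a-\gamma\{C\}$, match volumes, identify the positive parts $P=P'$ via the reverse Cauchy--Schwarz inequality, and then must upgrade the cohomological identity $\{N'\}=\{N\}-\gamma\{C\}$ to an equality of divisors using the linear independence of $P$-orthogonal curve classes (Zariski's lemma). The paper's global packaging is shorter and avoids this last step, since $\{A\}=0$ with $A$ effective immediately gives $A=0$; your route, on the other hand, sidesteps verifying that the residual class $\{R\}$ is nef and makes the dependence on each individual curve explicit.
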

\begin{proof}
We write the Siu decomposition of the current $T$ as $T=R+\sum_{j=1}^N\lambda_i [D_i]$. Note that the set $E_+(T):=\{x\in X \;|\; \nu(T,x)>0\}$ contains at most finitely many divisors (Proposition \ref{lelong}). We claim that $\{R\}$ is big and nef. Indeed, by construction the current $R$ has not positive Lelong number along curves and so any current with minimal singularities $R_{\min}\in \{R\}$ has the same property. Thus the Zariski decomposition of $\{R\}$ is of the type $\{R\}=\{R\}+0$. Furthermore $$\vol(\{R\})\leq \vol(\a)=\int_X \langle T^2\rangle =\int_X \langle R^2\rangle\leq \vol(\{R\}), $$ that implies $\vol(\a)=\{R\}^2>0$. Then 
$T= R+\sum_{j=1}^N\rho_i [D_i]+\sum_{j=1}^N \eta_i [D_i],$
where $\eta_i=\nu(T_{\min},D_i)$ with $T_{\min}\in \a$. Clearly $\rho_i\geq 0$, for any $i$. We want to show that $\rho_i=0$. Set $S:= R+\sum_{j=1}^N\rho_i [D_i]$ and write the Zariski decomposition of $\a$ as $\a=\a_1+\sum_{j=1}^N \eta_i \{D_i\}$. Then $\a_1=\{S\}$. This means that $\{S\}$ is big and nef and $\vol(\a)=\a_1^2=\{S\}^2$. Now, $\{R+A\}= \{S\}$ where $A=\sum_{j=1}^N\rho_i [D_i]$ is an effective $\R$ divisor. Using the same arguments in the proof of Proposition \ref{deczar} we get $\{A\}\cdot \{R\}=\{A\}\cdot\{S\}=\{A\}^2=0$ and using the Hodge index theorem we conclude.
\end{proof}

\section{Sums of finite energy currents }
Let $X$ be a compact K{\"a}hler manifold of complex dimension $n$ and let $\a$ and $\beta$ be big classes on $X$. Given two positive currents $T\in\a$ and $S\in\b$ with full Monge-Amp{\`e}re mass, it is natural to wonder whether $T+S$ has full Monge-Amp{\`e}re mass in $\a+\b$, and conversely.

\subsection{Stability of energy classes}

We start proving Theorem B of the introduction.

\begin{thm}\label{pro1}
Fix $T\in \mathcal{T}_{\a}(X) $, $S\in \mathcal{T}_{\b}(X)$ and $\chi\in \mathcal{W^-}\cup \mathcal{W}_M^+$. Then 
\begin{itemize}
\item[(i)] $T+S\in \cE(X, \a+\beta )$ implies $T\in\cE(X, \a)$ and $S\in\cE(X, \b)$,
\item[(ii)] $T+S\in \cE_{\chi}(X, \a+\beta )$ implies $T\in\cE_{\chi}(X, \a)$ and $S\in\cE_{\chi}(X, \b)$.
\end{itemize}
If $\a,\b$ are K{\"a}hler classes, then conversely
\begin{itemize}
\item[(iii)] $ T\in\cE(X, \a)$ and $S\in\cE(X, \b)$ implies $T+S\in \cE(X, \a+\beta ) $,
\item[(iv)]   $T\in\cE_{\chi}(X, \a)$ and $S\in\cE_{\chi}(X, \b)$ implies $T+S\in \cE_{\chi}(X, \a+\beta )$.
\end{itemize}
\end{thm}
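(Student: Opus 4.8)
The plan is to treat the four statements as two genuinely different problems: (i)–(ii) are "descent" statements that should hold in great generality, while (iii)–(iv) are "ascent" statements that use the K\"ahler hypothesis in an essential way (through the availability of comparison-of-currents and the fact that $V_\theta$, $V_{\theta'}$, $V_{\theta+\theta'}$ can all be taken bounded). For (i), the key observation is a monotonicity/domination principle: writing $T=\theta+dd^c\f$, $S=\theta'+dd^c\p$ and $\theta''=\theta+\theta'$, one has $\f+\p-V_{\theta''}\le (\f-V_\theta)+(\p-V_{\theta'})+O(1)$, and on the plurifine open sets where all potentials exceed $-k$ the canonical approximants of $\f+\p$ dominate the sum of those of $\f$ and $\p$. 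Expanding $\langle (T+S)^n\rangle$ by multilinearity of the non-pluripolar product and using that each mixed term $\langle T^j\wedge S^{n-j}\rangle$ is a positive measure, I would show that $\int_X\langle(T+S)^n\rangle=\vol(\a+\b)$ forces $\int_X\langle T^n\rangle$ (the $j=n$ term) to be as large as possible, hence $=\vol(\a)$; here I would invoke the known inequality $\vol(\a+\b)\ge$ (sum of the mixed volumes) together with $\vol(\a+\b)\ge \vol(\a)+\vol(\b)$-type superadditivity, and the fact that $T+S\in\cE$ means equality is attained termwise. The cleanest route is probably: push everything through \cite[Proposition 2.14]{begz} (the characterization of full mass via $\int_{\{\cdot\le V-k\}}\MA\to 0$), since $\{\f+\p\le V_{\theta''}-2k\}\subset\{\f\le V_\theta-k\}\cup\{\p\le V_{\theta'}-k\}$ reduces the full-mass defect of $T+S$ to those of $T$ and $S$.

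For (ii), I would upgrade the same comparison to the weighted level. Using $E_{\chi,\theta''}(\f+\p)\ge$ (something controlling $E_{\chi,\theta}(\f)$ and $E_{\chi,\theta'}(\p)$ separately): for $\chi\in\mathcal W^-$ convexity gives $(-\chi)(a+b)\ge \frac12[(-\chi)(2a)+(-\chi)(2b)]\ge$ comparable to $(-\chi)(a)+(-\chi)(b)$ up to the doubling constant built into $\chi$ (or directly $(-\chi)(a+b)\ge (-\chi)(a)$ and $\ge(-\chi)(b)$ by monotonicity, then bound a symmetric combination), while the measures $\langle (T+S)^j\wedge\theta''^{\,n-j}_{\min}\rangle$ dominate $\langle T^j\wedge\theta_{\min}^{n-j}\rangle$ after expansion. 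For $\chi\in\mathcal W^+_M$ I would instead use the characterization $\f\in\cE_\chi$ iff $\sup_{\psi\ge\f,\ \psi\ \mathrm{min.\ sing.}}E_{\chi,\theta}(\psi)<\infty$ recalled in the preliminaries, applying it to the approximants $\f^k$, $\p^k$ and using that $\f^k+\p^k$ is a competitor (with minimal singularities) for $\f+\p$. Monotonicity of the $\chi$-energy along $\mathcal W^-$ and the two-sided bound $|t\chi'(t)|\le M|\chi(t)|$ for $\mathcal W^+_M$ are exactly what make these manipulations legitimate; this bookkeeping is the main technical—but not conceptual—obstacle.

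For (iii)–(iv), with $\a,\b$ K\"ahler I can choose K\"ahler forms $\omega_1\in\a$, $\omega_2\in\b$, so $\a+\b$ is represented by $\omega_1+\omega_2$ and $V_{\omega_1},V_{\omega_2},V_{\omega_1+\omega_2}$ are all bounded; the finite-energy classes then coincide with the classical $\cE_\chi(X,\omega)$ of \cite{GZ2}. I would invoke the known stability in the K\"ahler case: if $\f\in\cE_\chi(X,\omega_1)$ and $\p\in\cE_\chi(X,\omega_2)$ then $\f+\p\in\cE_\chi(X,\omega_1+\omega_2)$. For $\cE$ (statement (iii)) this follows by expanding $(\omega_1+\omega_2+dd^c(\f+\p))^n$ and using that each mixed non-pluripolar product is dominated by a combination of pure ones (a standard mixed-Monge-Amp\`ere inequality of Bedford–Taylor type, valid since everything has controlled singularities), so full mass of $T$ and $S$ forces full mass of $T+S$. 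For $\cE_\chi$ (statement (iv)) the additional input is the inequality $E_{\chi,\omega_1+\omega_2}(\f+\p)\le C_M\bigl(E_{\chi,\omega_1}(\f)+E_{\chi,\omega_2}(\p)\bigr)+C$, again using convexity/concavity of $\chi$ and the $M$-condition to compare $(-\chi)(\f+\p-V)$ with $(-\chi)(\f-V_1)+(-\chi)(\p-V_2)$, together with the fundamental inequality (\cite{GZ2}, \cite{begz}) bounding weighted energies of sums. The real subtlety here is that multilinearity of the non-pluripolar product requires the summands to interact well; in the K\"ahler case this is automatic because one may reduce to bounded potentials by truncation and pass to the limit by monotone convergence, whereas in the merely big case (as Remark \ref{minsum} shows) minimal singularities need not add up—which is precisely why (iii)–(iv) are stated only for K\"ahler classes and why Proposition \ref{prop} later needs an extra hypothesis.
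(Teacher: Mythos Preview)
Your argument for (i) has the sublevel-set inclusion reversed. The containment $\{\f+\p\le V_{\theta''}-2k\}\subset\{\f\le V_\theta-k\}\cup\{\p\le V_{\theta'}-k\}$ bounds the mass defect of $T+S$ by those of $T$ and $S$, which is the \emph{ascent} direction (iii), not the descent direction (i) you are trying to prove. To deduce $T\in\cE(X,\a)$ from $T+S\in\cE(X,\a+\b)$ you need the opposite inclusion: on $X\setminus\{\p=-\infty\}$ the inequality $\f+\p-V_{\theta''}\le(\f-V_\theta)+O(1)$ (which you did write down) yields $\{\f\le V_\theta-k\}\subset\{\f+\p\le V_{\theta''}-k+C\}$, and together with the measure domination $\langle(\theta+dd^c\f_k)^n\rangle\le\langle(\theta''+dd^c(\f_k+\p))^n\rangle$ and a further comparison with the canonical approximant of $\f+\p$ (via \cite[Proposition~2.14]{begz}, using that $V_{\theta''}$ is less singular than $V_\theta+\p$), this bounds $m_k(\f)$ by the corresponding quantity for $\f+\p$. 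The same correct inclusion then drives (ii). Your alternative volume-expansion approach to (i) is also shaky in the big case: $\vol(\a+\b)=\langle(\a+\b)^n\rangle$ need not equal $\sum_j\binom{n}{j}\langle\a^j\cdot\b^{n-j}\rangle$ when Condition $\mathcal{MS}$ fails, so the termwise matching you envisage is not available without extra hypotheses.

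For (iii)--(iv) your expansion route is different from the paper's and, while plausible in the K\"ahler setting, would still require the non-obvious input that $T\in\cE(X,\a)$ and $S\in\cE(X,\b)$ force each mixed mass $\int_X\langle T^j\wedge S^{n-j}\rangle$ to attain its maximal value $\a^j\cdot\b^{n-j}$. The paper sidesteps this entirely: it first observes that for any two K\"ahler forms $\omega,\omega'$ the classes $\cE_\chi(X,\omega)$ and $\cE_\chi(X,\omega')$ agree on $PSH(X,\omega)\cap PSH(X,\omega')$ (immediate from $C^{-1}\omega\le\omega'\le C\omega$ and expanding), then chooses a single K\"ahler form $\omega$ with $\omega_\a,\omega_\b\le\omega$ and invokes the convexity of $\cE_\chi(X,\omega)$ from \cite{GZ2} to get $\f+\p\in\cE_\chi(X,2\omega)$, hence $\f+\p\in\cE_\chi(X,\omega_\a+\omega_\b)$.
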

\begin{proof}
Pick $\theta_{\a}$ and $\theta_{\b}$ smooth representatives in $\a$ and $\beta$, so that $\tilde{\theta}:=\theta_{\a}+\theta_{\b}$ is a smooth form representing $\a+\beta$. We decompose $T=\theta_{\a}+dd^c\f$ and $S= \theta_{\b} + dd^c \psi$. We assume $\f+\psi\in\cE(X, \tilde{\theta}) $, and first prove that $\f$ has full mass, which is equivalent to showing
$$m _k :=\int_{\{\f\leq \f_{\min}- k\}} \langle (\theta_{\a} + dd^c \max(\f, \f_{ \min} - k))^n \rangle\longrightarrow 0 \qquad \textit{as}\quad  k\rightarrow +\infty $$
where $T_{\min} = \theta_{\a} + dd^c \f_{\min} $  has minimal singularities in $ \a$ (\cite[p.229]{begz}). First, observe that on $X\setminus \{\psi=-\infty\}$ we have
$$\{\f\leq \f_{\min}-k\}\subseteq \{\f+\psi\leq \f_{\min}+\psi-k\}\subseteq \{\f+\psi\leq \phi_{\min}-k\}$$
where $S_{\min}= \tilde{\theta} +dd^c \phi_{\min}$ has minimal singularities in $\a+\beta$. Since the non-pluripolar product does not charge pluripolar sets, we infer
\begin{eqnarray*}
0\leq m_k & \leq & \int_{\{ \f+\psi \leq \phi_{\min}-k\}}  \langle (\theta_{\a} + dd^c \max(\f, \f_{\min}- k))^n \rangle\\
&\leq & \int_{\{ \f+\psi \leq \phi_{\min}-k\}\setminus  \{ \psi=-\infty \}} \langle (\tilde{\theta}+dd^c \max(\f+\psi, \f_{\min}+\psi - k))^n \rangle\\
&\leq & \int_{ \{\f+\psi \leq \phi_{\min}-k \} } \langle(\tilde{\theta} + dd^c \max(\f+ \psi, \phi_{\min}-k))^n \rangle
\end{eqnarray*}
 where the last inequality follows from the fact that $ \phi_{\min}$ is less singular then $\f_{\min} + \psi$ (see \cite[Proposition 2.14]{begz}). But, by assumption, the last term goes to $0$ as $k$ tends to $+\infty$, hence the conclusion. Changing the role of $\f$ and $\psi$ one can prove similarly that also $\psi$ is with full Monge-Amp{\`e}re mass.\\
 \indent We now prove the second statement. By assumption $\f + \psi \in\cE_{\chi}(X, \tilde{\theta})$ with $\chi$ a convex weight and so from above we know that $ \f $ and $\psi$ both have full Monge-Amp{\`e}re mass. It suffices to check that $\f\in \cE_{\chi}(X,\theta_{\a})$. By \cite{begz},
$$E_{\chi,\theta}(\f) < +\infty \quad \textit{iff}\quad \sup_k\int_{X}(-\chi)(\f_k -\f_{\min})MA(\f_k) < +\infty,$$
for any sequence $\f_k$ of $\theta_\a$-psh functions with full Monge-Amp{\`e}re mass decreasing to $\f$.
Since $T_1\leq T_2$ implies $\langle T_1^n\rangle \leq \langle T_2^n\rangle$ we obtain \\
$ \displaystyle
\int_{X}(-\chi)(\f_k- \f_{\min}) \langle(\theta_{\a} +dd^c \f_k)^n \rangle $
\vspace*{-9pt}
\begin{eqnarray*}
& \leq & \int_{X \setminus \{\psi=-\infty\}}(-\chi)(\f_k-\f_{\min}) \langle(\tilde{\theta} + dd^c (\f_k +\psi))^n \rangle \\
& \leq &  \int_{X\setminus \{\psi=-\infty\}} (-\chi)(\f_k +\psi -\phi_{\min} )\MA(\f_k+ \psi) 
\end{eqnarray*}
where the last inequality follows from monotonicity of $\chi$ and the fact that on $X \setminus \{\psi=-\infty\}$ 
$$\f_k -\f_{\min} = (\f_k + \psi ) -(\f_{\min} + \psi )\geq (\f_k + \psi )-\phi_{\min}.$$ 
Therefore $E_{\chi,\tilde{\theta}}(\f+\psi)<+\infty$ implies $E_{\chi,\theta_{\a}}(\f)<+\infty$, as desired.\\
\indent  Assume now that $\a,\b$ are both K{\"a}hler classes and choose K{\"a}hler forms $\omega_\a\in\a$, $\omega_\b\in\b$ as smooth representatives. We want to prove that if $\f\in\cE(X,\omega_\a)$ and $\psi\in\cE(X,\omega_\beta)$ then $\f+\psi\in\cE(X,\omega_\a+\omega_\b)$. Let $\omega$ be another K{\"a}hler form on $X$. We first show that $\f\in\cE(X,\omega_\a)$ (resp. $\f\in\cE_{\chi}(X,\omega_\a)$) if and only if $\f\in\cE(X,\omega)$ (resp. $\f\in\cE_{\chi}(X,\omega)$) whenever $\f\in PSH(X,\omega)$. We recall that, since $\omega_\a$ and $\omega$ are K{\"a}hler forms, there exists a constant $C>0$ such that $\frac{1}{C}\omega\leq \omega_\a\leq C\omega$. Thus
\begin{eqnarray*}
\int_{\{\f\leq -k\}} (\omega_\a+dd^c \f_k)^n &\leq &  \int_{\{\f\leq -k \}} (C\omega+dd^c \f_k)^n\\
 &\leq &  \tilde{C}\sum_{j=0}^n \int_{\{\f \leq -k \}} \omega^j\wedge (\omega+dd^c \f_k)^{n-j},
\end{eqnarray*}
where $\f_k:=\max(\f,-k)$. And so $\f\in\cE(X,\omega)$ implies $\f\in\cE(X,\omega_\a)$. Analogously one can prove the reverse. Similarly, for any weight $\chi\in  \mathcal{W^-}\cup \mathcal{W}_M^+ $,
\begin{eqnarray*}
\int_{X} -\chi(\f_k)(\omega_\a+dd^c \f_k)^n &\leq &  \tilde{C}\sum_{j=0}^n\int_{X} -\chi(\f_k) (\omega+dd^c \f_k)^j\wedge \omega^{n-j}.
\end{eqnarray*}
Thus, if $\f\in\cE_{\chi}(X,\omega)$ then $\f\in\cE_{\chi}(X,\omega_\a)$. With the same argument we get the reverse. Now, let $\omega$ be a K{\"a}hler form such that $\omega_\a, \omega_\b\leq \omega$. From above we have that $\f,\psi\in\cE(X,\omega)$ (resp. $\f,\psi\in\cE_{\chi}(X,\omega)$) and since the energy classes are convex (\cite[Propositions 1.6, 2.10 and 3.8]{GZ2}), it follows $\f+\psi\in \cE(X,2\omega)$ (resp. $\f+\psi\in\cE_{\chi}(X,2\omega)$). From the previous observation we can deduce $\f+\psi\in \cE(X,\omega_\a+\omega_\b)$.
\end{proof}

Examples \ref{nostab} and \ref{ce2} below show the reverse implication is not true in general. This is particularly striking if the following condition is not satisfied:

\begin{defi}
\it{We say that pseudoeffective classes $\a_1,\cdots,\a_p$ satisfy Condition $\mathcal{MS}$ if the sum $ T_{1}+\cdots +T_p$ of positive currents $T_i\in\a_i$ with minimal singularities has minimal singularities in $\a_1+\cdots +\a_p$.}
\end{defi}

Note that if $\a_1,\cdots,\a_p$ satisfy Condition $\mathcal{MS}$ the positive intersection class $\langle \a_1\cdots\a_p \rangle$ turns to be multi-linear while it is not so in general (\cite[p.219]{begz}).

\begin{pro}\label{pro3}
Let $T\in \mathcal{T}_{\a}(X) $ and $\chi\in\mathcal{W}^-\cup \mathcal{W}^-_M$. Assume that $\a $ is a K{\"a}hler class and $\b$ is a semi-positive class. Fix $\theta_\b\in\b$ a semipositive form. Then 
\begin{itemize}
\item[(i)] $T+\theta_\b\in \cE(X, \a+\beta )$ if and only if $T\in\cE(X, \a)$,
\item[(ii)] $T+\theta_\b\in \cE_{\chi}(X, \a+\beta )$ if and only if $T\in\cE_{\chi}(X, \a)$.
\end{itemize}
\end{pro}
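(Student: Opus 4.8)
The plan is to reduce the statement to the general criterion already available from Theorem \ref{pro1} together with the special structure coming from the fact that $\theta_\b$ is a fixed \emph{smooth} semipositive form, so that $S=\theta_\b$ has \emph{bounded} (in fact smooth) local potentials. The "only if" direction in both (i) and (ii) is immediate from Theorem \ref{pro1}(i)--(ii), since $T+\theta_\b\in\mathcal{T}_{\a+\b}(X)$ and convexity of weights is available; so the content is the "if" direction. First I would fix a smooth K\"ahler form $\omega_\a\in\a$ and write $T=\omega_\a+dd^c\f$ with $\f\in\cE(X,\omega_\a)$ (resp.\ $\cE_\chi$). Since $\theta_\b$ is a smooth semipositive form, $\tilde\theta:=\omega_\a+\theta_\b$ is a smooth \emph{K\"ahler} representative of $\a+\b$ and $T+\theta_\b=\tilde\theta+dd^c\f$ with the \emph{same} potential $\f$. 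The key point is then that $V_{\tilde\theta}$ and $V_{\omega_\a}$ differ by a bounded function: indeed $\omega_\a\le\tilde\theta\le\omega_\a+\|\theta_\b\|_\infty\,\omega_\a/c$ for a suitable constant, so any $\omega_\a$-psh function is $\tilde\theta$-psh and, comparing the two extremal functions, $|V_{\tilde\theta}-V_{\omega_\a}|\le C$ on $X$. Thus $\f$ has minimal singularities relative to $\tilde\theta$ iff it does relative to $\omega_\a$, and the "canonical approximants" $\f_k=\max(\f,V_{\omega_\a}-k)$ are comparable to the ones built from $V_{\tilde\theta}$.

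Next I would compare the full-mass condition directly. Using $\omega_\a\le\tilde\theta$ and the monotonicity $\langle S_1^n\rangle\le\langle S_2^n\rangle$ for $S_1\le S_2$ of the non-pluripolar product (together with the fact that it does not charge pluripolar sets), for the truncations $\f_k$ one has
$$\langle(\omega_\a+dd^c\f_k)^n\rangle\le\langle(\tilde\theta+dd^c\f_k)^n\rangle,$$
and both sides have the \emph{same} total mass on $X\setminus\{\f=-\infty\}$ only up to the volume correction; more precisely, since $\vol(\a)\le\vol(\a+\b)$ always, the inequality on the sublevel sets $\{\f\le V_{\omega_\a}-k\}\subset\{\f\le V_{\tilde\theta}-k+C\}$ shows that if $\int_{\{\f\le V_{\tilde\theta}-k\}}\langle(\tilde\theta+dd^c\f_k)^n\rangle\to0$ then so does the corresponding quantity for $\omega_\a$, i.e.\ $\f\in\cE(X,\omega_\a)\Rightarrow\f\in\cE(X,\tilde\theta)$. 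For (ii), with $\chi$ convex (the $\mathcal{W}_M^+$ case being handled through the $\sup$-characterization recalled after the definition of $\cE_\chi$), I would run the same estimate inside the energy integral: by monotonicity of $-\chi$ and $|V_{\tilde\theta}-V_{\omega_\a}|\le C$,
$$\int_X(-\chi)(\f_k-V_{\tilde\theta})\langle(\tilde\theta+dd^c\f_k)^n\rangle\le\int_X(-\chi)(\f_k-V_{\omega_\a}+C)\langle(\tilde\theta+dd^c\f_k)^n\rangle,$$
and then bound $\langle(\tilde\theta+dd^c\f_k)^n\rangle$ from above by a constant times a sum $\sum_{j}\langle(\omega_\a+dd^c\f_k)^j\wedge\omega_\a^{n-j}\rangle$ exactly as in the K\"ahler case of Theorem \ref{pro1}, using $\tilde\theta\le C'\omega_\a$; the weight $-\chi(\,\cdot\,+C)$ is controlled by $-\chi$ up to a multiplicative constant since $\chi$ is convex increasing and comparable on shifts of bounded size. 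Taking $\sup_k$ gives $E_{\chi,\tilde\theta}(\f)<+\infty$.

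The main obstacle I expect is purely the bookkeeping around the mixed Monge-Amp\`ere terms $\langle(\omega_\a+dd^c\f_k)^j\wedge\omega_\a^{n-j}\rangle$ and making the weight-shift estimate $-\chi(t+C)\le c_{\chi,C}(-\chi(t))$ clean for all $t\in\R^-$ — this is where one must be careful that $\chi(0)=0$ forces the behaviour near $0$ to be handled separately from the behaviour near $-\infty$, but convexity makes both controllable. A secondary point is that one should note that Condition $\mathcal{MS}$ is automatically satisfied here: a current with minimal singularities in the K\"ahler class $\a$ has bounded potential and $\theta_\b$ is smooth, so their sum has bounded potential, which is minimal in the semipositive class $\a+\b$; hence the framework of Theorem \ref{pro1} applies without extra hypotheses and no genuinely new estimate beyond the comparison $\tfrac1{C}\omega_\a\le\tilde\theta\le C\omega_\a$ is needed.
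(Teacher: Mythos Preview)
Your plan for (ii) is essentially the paper's argument: choose a K\"ahler form $\omega_\a\in\a$, set $\tilde\theta=\omega_\a+\theta_\b$, use $\tilde\theta\le(A+1)\omega_\a$ to bound
\[
\int_X(-\chi)(\f_k)\,(\tilde\theta+dd^c\f_k)^j\wedge\tilde\theta^{n-j}
\]
by a constant times a sum of mixed terms $\int_X(-\chi)(\f_k)\,(\omega_\a+dd^c\f_k)^{j-l}\wedge\omega_\a^{n-j+l}$, which are in turn controlled by $E_{\chi,\omega_\a}(\f_k)$. Your worries about comparing $V_{\tilde\theta}$ with $V_{\omega_\a}$ and about the weight-shift $-\chi(t+C)\le c_{\chi,C}(-\chi(t))$ are unnecessary here: since both $\omega_\a$ and $\tilde\theta$ are K\"ahler, $V_{\omega_\a}\equiv V_{\tilde\theta}\equiv 0$, so there is no shift at all and the approximants are simply $\f_k=\max(\f,-k)$ on both sides.

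There is however a genuine gap in your treatment of (i). The monotonicity $\langle(\omega_\a+dd^c\f_k)^n\rangle\le\langle(\tilde\theta+dd^c\f_k)^n\rangle$ gives you
\[
\int_{\{\f\le -k\}}\langle(\omega_\a+dd^c\f_k)^n\rangle\le\int_{\{\f\le -k\}}\langle(\tilde\theta+dd^c\f_k)^n\rangle,
\]
which yields $\f\in\cE(X,\tilde\theta)\Rightarrow\f\in\cE(X,\omega_\a)$ --- the direction already covered by Theorem~\ref{pro1} --- and \emph{not} the direction you claim after your ``i.e.''. To go from $\cE(X,\omega_\a)$ to $\cE(X,\tilde\theta)$ you would need to control the mass of $\langle(\tilde\theta+dd^c\f_k)^n\rangle$ on $\{\f\le -k\}$ from above by something going to zero, and plain monotonicity does not do that. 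The paper avoids this issue entirely by deriving (i) from (ii) via
\[
\cE(X,\a)=\bigcup_{\chi\in\mathcal W^-}\cE_\chi(X,\a),
\]
which you should do as well; your (ii) argument already provides everything needed.
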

We will exhibit an Example \ref{nostab} such that $\a$ is semipositive, $\b$ is K{\"a}hler, $\theta_\b$ is a  K{\"a}hler form in $\b$, $T\in\cE^{1}(X, \a)$ but $T+\theta_\b\notin \cE^{1}(X, \a+\beta )$.
\begin{proof}
We will first prove the second statement. Fix $\omega, \theta_{\b}$ smooth representatives of $\a$ and $\beta$, respectively and denote $\tilde{\omega}:=\omega+\theta_{\b}$. Note that $\omega$ can be chosen to be K{\"a}hler. Let $T:=\omega+dd^c\f\in \cE_{\chi}(X, \a ) $, by \cite{begz} we have
$$E_{\chi,\omega}(\f)\Longleftrightarrow \sup_{k} E_{\chi,\omega}(\f_k)<+\infty$$ where $\f_{k}:= \max(\f, 
-k)$. We now show that $E_{\chi,\tilde{\omega}}(\f_k)$ is uniformly bounded from above. Fix $A$ such that $ \tilde{\omega} \leq (A+1)\omega$. Then
\begin{eqnarray*}
&&\int_{X} -\chi(\f_k) \left(\tilde{\omega} +dd^c\f_k \right)^j \wedge \tilde{\omega} ^{n-j} \\
&& \leq  (A+1)^{n-j} \int_{X} -\chi(\f_k) \left( A\omega+ \omega +dd^c\f_k \right)^j \wedge \omega ^{n-j} \\
&& \leq  C\, \sum_{l=0}^j \int_X -\chi(\f_k) \left( \omega+dd^c\f_k\right)^{j-l} \wedge \omega^{n-j+l} \leq C' \, E_{\chi,\omega}(\f_k).
\end{eqnarray*} 

\indent The first statement is an easy consequence of the second one recalling that $$\cE(X,\a)=\bigcup_{\chi\in\mathcal{W^-}}\cE_{\chi}(X,\a).$$ The reverse inclusions is Theorem \ref{pro1}.
\end{proof}
\begin{rem}
Let us stress that the first statement of Proposition \ref{pro3} could be proved in great generality ($\a$, $\b$ big classes such that Condition $\mathcal{MS}$ holds, $\theta_\b$ current with minimal singularities) if given $\a_1,\cdots,\a_n$ big classes and $T_1\in \cE(X,\a_1)$, the following would hold
$$\int_X\langle T_1\wedge \theta_{2,\min}\wedge...\wedge \theta_{n,\min}\rangle= \int_X\langle \theta_{1,\min}\wedge...\wedge \theta_{n,\min}\rangle $$
where $\theta_{i,\min}:=\theta_i+ dd^cV_{\theta_i}\in\a_i$.
\end{rem}

\subsection{Counterexamples}

The following example shows that given two currents $T\in\cE^1 (X, \a )$ and $S\in\cE^1 (X, \beta )$ we can not expect that $T+S\in\cE^1 (X, \a+\beta )$, even if $\a$ is semipositive and $\b$ is K{\"a}hler.

\begin{ex}\label{nostab}
Let $\pi: X \rightarrow \mathbb{P}^2 $ be the blow up at one point $p$ and set $E:=\pi^{-1}(p)$. Fix $\a=\pistar \{\omega_{FS}\}$ and $\beta=2\pistar \{\omega_{FS}\}-\{E\}$ so that $\a+\beta=3\pistar \{\omega_{FS}\}-\{E\}$. We pick $\tilde{\omega}\in\a+\beta$ a K{\"a}hler form of the type $\tilde{\omega}=\pistar\omega_{FS}+\omega$, where $\omega\in\beta$ is a K{\"a}hler form. We will show that 
$$\cE^1 (X, \a ) \nsubseteq \cE^1 (X, \a+\beta ) \cap \mathcal{T}_{\a}(X).$$
The goal is to find a $\omega_{FS}$-psh function $\f$ on $\mathbb{P}^2$ such that $\pistar\f \in \cE^1 (X, \pistar\omega_{FS} )$ but $\pistar \f\notin \cE^1 (X, \tilde{\omega} )$. Let $U$ be a local chart of $\mathbb{P}^2$ such that $p \rightarrow (0, 0)\in U$. We define 
$$\f_{\delta} := \frac{1}{C}\chi \cdot u_{\delta} -K_{\delta}$$
where $u_{\delta} := -(- \log\|z\|)^{\delta}$, $\chi$ is a smooth cut-off function such that $\chi \equiv 1$ on $\mathbb{B}$ and $\chi \equiv 0$ on $U \setminus \mathbb{B}(2)$, $K_{\delta}$ is a positive constant such that $\f_{\delta}\leq −1$ and $C>0$. Choosing $C$ big enough $\f_{\delta}$ induces a $\omega_{FS}$-psh function on $\mathbb{P}^2$, say $\tilde{\f}_{\delta}$. Note that by \cite[Corollary 2.6]{cgz}
$\tilde{\f}_{\delta} \in \cE(\mathbb{P}^2, \omega_{FS})$ if $0 \leq {\delta} < 1$. We let the reader check that $\tilde{\f}_{\delta}\in W^{1,2}(\mathbb{P}^2,\omega_{FS})$ for all $0\leq \delta <1$. Therefore $\tilde{\f}_{\delta}\in \cE^1 (\mathbb{P}^2, \omega_{FS} )$ iff 
$$\int_{\mathbb{P}^2} -\tilde{\f}_{\delta} (dd^c\tilde{\f}_{\delta})^2 < +\infty$$ 
We claim this is the case iff $ 0 \leq {\delta} < \frac{2}{3}$.\\
Note that $\tilde{\f}_{\delta}$ is smooth outside $p$, therefore we have to check that 
\begin{equation}\label{int}
 \int_{\mathbb{B}(\frac{1}{2})}-u_{\delta} (dd^c u_{\delta} )^2 < +\infty.
\end{equation}
Set $\chi(t)=-(-t)^{\delta}$ so that $u _{\delta} = \chi(\log\|z\|)$. Then $(dd^c u_{\delta})^2 = C_1\, \frac{1}{8\|z\|^4}\, \chi^{''}\cdot \chi^{'}(\log\|z\|) dz_1 \wedge d\bar{z}_1 \wedge dz_2 \wedge d\bar{z}_2 $ on $\mathbb{B}(\frac{1}{2})\setminus\{(0, 0)\}$, hence the convergence of the integral in \eqref{int} is equivalent to the convergence of
$$\int_{\mathbb{B}(\frac{1}{2})\setminus\{(0, 0)\}} \frac{-\chi(\log\|z\|)\cdot \chi^{''}(\log\|z\|)\cdot \chi^{'}(\log\|z\|)}{\|z\|^4}\,dz_1 \wedge d\bar{z}_1 \wedge dz_2 \wedge d\bar{z}_2$$
$$ = \int_{0}^{\frac{1}{2}} \frac{-\chi(\log\rho)\cdot \chi^{''}(\log\rho)\cdot \chi^{'}(\log\rho)}{\rho}\,\,d\rho =\delta(1-\delta)\int_{-\log\frac{1}{2}}^{+\infty} \frac{1}{(s)^{3-3\delta}} \,ds $$
which is finite iff $0 \leq {\delta} < \frac{2}{3}$, as claimed. Therefore by Proposition \ref{fullmass} we get  $\pistar \tilde{\f}_{\delta} \in \cE^1 (X,\pistar\omega_{FS})$. But $\pistar \tilde{\f}_{\delta} \notin \cE^1 (X, \tilde{\omega})$ if $\frac{1}{2}\leq \delta < \frac{2}{3}$ since
$$\left| \nabla(\pistar\tilde{\f}_{\delta})\right| \notin L^2(X, (\tilde{\omega})^2 ) \quad \textit{if}\quad \delta \geq \frac{1}{2}.$$
Indeed, let $z =(z_1, z_2)\in \mathbb{B}$ and fix a coordinate chart in $X$, then $\pi(s,t)=(z_1, z_2)=(s,st)$. Therefore, on $\pi^{-1}(\mathbb{B})$
$$
 \f_{\delta} \circ \pi(s,t)= \frac{1}{C}\, u_{\delta}(s,st)= -\frac{1}{C}\left(-\log|s|-\log \sqrt{1 + |t|^2} \right)^{\delta}
$$
Hence,
$$\int_{\pi^{-1}(\mathbb{B})} \left| \frac{\partial(\f_{\delta}\circ \pi)}{\partial s}   \right|^2 ds\wedge d\bar{s}\wedge dt\wedge d\bar{t}\geq \left( \frac{\delta}{2C} \right)^2 \int_{\pi^{-1}(\mathbb{B})} \frac{ ds\wedge d\bar{s}\wedge dt\wedge d\bar{t} }{|s|^2 (-\log |s|)^{2-2\delta}} $$
which is not finite if ${\delta}\geq \frac{1}{2}$. The conclusion follows from \cite[Theorem 3.2]{GZ2}.
\end{ex}

\begin{rem}\label{fullmas}
Observe that $\a,\b$ satisfy Condition $\mathcal{MS}$ in previous example and also that $\pistar \tilde{\f}_{\delta} \in \cE(X, \tilde{\omega} )$. Indeed, let $T:=\pistar\omega_{FS}+dd^c(\tilde{\f}_{\delta}\circ \pi)$, we need to check that $T +\omega\in\cE(X,\a+\beta)$. Since $T\in\cE(X, \a)$ and
$$
\langle(T +\omega)^2 \rangle=\langle T^2 \rangle + 2\langle T\rangle \wedge \omega + (\omega )^2.
$$
it suffices to show that 
$$
\{\langle T \rangle \wedge \omega \} = \{\pistar \omega_{FS} \}\cdot \{\omega \}.
$$
which is equivalent to
$$\{(T -\langle T \rangle)\wedge \omega  \} = 0.$$
Hence, what we need to show is that $T -\langle T \rangle = 0$. The $(1,1)$-current $T -\langle T \rangle$ is positive and is supported by the exceptional divisor $E$. Therefore using \cite[Corollary 2.14]{Dem} it results that $$T= \langle T \rangle + \gamma [E]$$ where $\gamma=\nu(T, E)=\nu(\pistarb T, p)= 0$ since $\delta<1$. And so the conclusion.
\end{rem}
Previous remark could let us think whenever $T\in\mathcal{E}(X,\a)$ and $S\in\cE(X, \beta)$ then $T+S\in\cE(X,\a+ \beta)$, but this is not true either as the following example shows:

\begin{ex}\label{ce2}
Let $\pi: X \rightarrow \mathbb{P}^2 $ be the blow up at one point $p$ and set $E:=\pi^{-1}(p)$. Consider $\a=\pistar \{\omega_{FS}\}+\{E\}$ and $\beta=2\pistar \{\omega_{FS}\}-\{E\}$. Thus $\a+\beta=3\pistar \{\omega_{FS}\}$. Since $\beta$ is a K{\"a}hler class we can choose $S=\omega$ with $\omega$ a K{\"a}hler form. \\
Observe that currents with minimal singularities in $\a$ are of the type $\pistar S_{\min}+[E]$, where $S_{\min}$ is a current with minimal singularities in $\{\omega_{FS}\}$ (Remark \ref{blow}). By Lemma \ref{pluricurr}
$$\vol(\a)=\int_X \langle (\pistar S_{\min}+[E])^2 \rangle= \int_X\langle (\pistar S_{\min})^2 \rangle= \int_X \pistar \langle S_{\min}^2 \rangle =1,$$
while $\vol(\a+\beta)=(\a+\beta)^2=9$.

Let now $T\in\cE(X,\a)$ and recall that any positive $(1,1)$-current in $\a$ is of the form $T=\pistar S+[E]$ with $S\in\mathcal{T}_{\{\omega_{FS}\}}(\mathbb{P}^2)$. In particular we choose $T:=\pistar \omega_{FS}+[E]$. We want to show that $T+\omega\notin \cE(X,\a+\beta)$. Now, from the multilinearity of the non-pluripolar product we get
$$
\int_X \langle (T+\omega)^2\rangle = \int_X \langle (\pistar \omega_{FS}+[E]+\omega)^2\rangle = \int_X \langle (\pistar \omega_{FS}+\omega)^2\rangle=8 
$$
Hence $\int_X \langle (T+\omega)^2\rangle=8<9=\vol(\a+\beta)$.\\
\indent The same type of computations show that if we pick $T\in \cE(X,\a)$, then, for any $0<\varepsilon\leq 1 $, $T+\varepsilon \omega \notin  \cE(X, \a+\varepsilon\omega)$.
\end{ex}

\begin{rem}\label{remmin}
Note that in the latter example $\a,\b$ do not satisfy Condition $\mathcal{MS}$.
\end{rem}

\section{Comparison of Capacities}\label{capac}
Let $X$ be a compact K{\"a}hler manifold of complex dimension $n$ and let $\a$ be a big class on $X$. Set $\theta\in\a$ a smooth form and $\theta_{\min}:=\theta+dd^c V_{\theta}$ the positive $(1,1)$-current in $\alpha$ with 'canonical' minimal singularities.

\subsection{Intrinsic Capacities}\label{capac10}
We introduce the space of "$\theta_{\min}$-plurisubharmonic" functions
$$PSH(X, \theta_{\min}):=\left\{ \psi \;|\; \psi+V_{\theta} \quad\textit{is a}\;\, \theta-\textit{psh function} \right\}.$$ 
Note that a $\theta_{\min}$-psh function $\psi$ is not upper-semi-continuous but $\psi+V_{\theta}$ is.

\subsubsection{Monge-Amp{\`e}re capacity}
Following \cite{begz} we introduce the Monge-Amp{\`e}re capacity with respect to a big class.
\begin{defi}\label{cap10}
 \it{We define the capacity of a borel set $K\subseteq X$ as
\begin{equation*}
 Cap_{\theta_{\min}}(K):=\sup\left\{ \int_{K}\langle (\theta_{\min}+dd^c\psi)^n \rangle, \,\psi\in PSH(X,\theta_{\min}) \,|\, -1\leq \psi \leq 0 \right\}.
\end{equation*} }
\end{defi}
Observe that the above one is the same definition as \cite[Definition 4.3]{begz}, just taking $\psi=\f-V_{\theta}$, where $\f$ is a $\theta$-psh function. Here we introduce this equivalent formulation since in Section \ref{capac} we need the positivity of the reference current $\theta_{\min}$.

\subsubsection{The relative extremal function}
We introduce the notion of the relative extremal function with respect to $ \theta_{\min}$. If $E$ is a Borel subset of
$X$, we set
$$
h_{E, \theta_{\min}}(x):=\sup \left\{\psi(x) \, |\, \psi \in PSH(X, \theta_{\min}) , \, 
\psi \leq 0 \text{ and } \psi_{|E} \leq -1 \right\},
$$
and $$h_{E,\theta_{\min}}^*:=(h_{E, \theta_{\min}}+V_{\theta})^*-V_{\theta}.$$ It is a standard matter to show that, as in the K{\"a}hler case (see \cite{gz1}), the $\theta_{\min}$-psh function $h_{E,\theta_{\min}}^*$ satisfies
 $$Cap_{\theta_{\min}}(K)=\int_K \MA(V_{\theta}+h_{K,\theta_{\min}}^*)= \int_X (-h_{K,\theta_{\min}}^*) \MA(V_{\theta}+h_{K,\theta_{\min}}^*)$$
 where $K\subset X$ is a compact set (for details see \cite[Lemma 1.5]{bbgz}).


\subsubsection{Capacities of sublevel sets}
We now generalize \cite[Lemma 5.1]{GZ2}.
\begin{lem}\label{cap}
Fix $\chi \in {\mathcal W}^- \cup {\mathcal W}_M^+$, $M \geq 1$.
If $\f \in {\mathcal E}_{\chi}(X,\theta)$,
then 
$$
\exists C_{\f}>0, \forall t>1, \; 
Cap_{\theta_{\min}}(\f < V_{\theta} -t) \leq C_{\f} |t \, \chi(-t)|^{-1}.
$$

Conversely if there exists $C_{\f}, \e>0$ such that 
for all $t>1$,
$$ Cap_{\theta_{\min}}(\f <V_{\theta} -t) \leq C_{\f} | t^{n+\e} \, \chi(-t)|^{-1}, $$ 
then $\f \in {\mathcal E}_{\chi}(X,\theta)$.
\end{lem}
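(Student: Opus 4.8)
The statement is the big-class analogue of \cite[Lemma 5.1]{GZ2}, so the plan is to follow that proof, replacing the K\"ahler reference form by $\theta_{\min}$ and making sure that everything is carried out on the ample locus $\Omega=\Amp(\a)$ where the relevant potentials are bounded and the non-pluripolar Monge-Amp\`ere operator behaves like the usual Bedford-Taylor operator. Throughout I write $u:=\f-V_\theta\in PSH(X,\theta_{\min})$, $u_t:=\max(u,-t)$, and I use the comparison between capacity and the relative extremal function $h^*_{E,\theta_{\min}}$ recalled just before the lemma.

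\textbf{First implication.} Assume $\f\in\cE_\chi(X,\theta)$, so $\f\in\cE(X,\a)$ and, recalling $E_{\chi,\theta}(\f)=\frac{1}{n+1}\sum_j\int_X(-\chi)(u)\,\langle T^j\wedge\theta_{\min}^{n-j}\rangle<+\infty$, in particular $\int_X(-\chi)(u)\,\MA(\f)<+\infty$. Fix $t>1$ and set $E:=\{u<-t\}=\{\f<V_\theta-t\}$. The first step is the elementary pointwise bound: since $-\chi$ is convex decreasing with $\chi(0)=0$ one has $(-\chi)(s)\ge \frac{s}{-t}(-\chi)(-t)$, hence $(-\chi)(-t)\le \frac{t}{(-s)}(-\chi)(s)$... more usefully, on $E$ one has $(-\chi)(u)\ge(-\chi)(-t)$ by monotonicity, so
\begin{equation*}
(-\chi)(-t)\int_E\MA(\f)\le\int_E(-\chi)(u)\,\MA(\f)\le\int_X(-\chi)(u)\,\MA(\f)=:C_\f'.
\end{equation*}
The second step controls $Cap_{\theta_{\min}}(E)$ by $\frac1t\int_E\MA(\f)$ up to the canonical-approximant subtlety. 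Here I would argue exactly as in \cite{GZ2}: by the comparison principle in the big class (\cite{begz}) applied to $u$ and $u+h^*_{E,\theta_{\min}}$, or more directly, use that for $\psi\in PSH(X,\theta_{\min})$ with $-1\le\psi\le 0$ one has $\{u<-t\}\subset\{u<t\psi-1\}$ up to... — cleaner: apply the comparison principle to the pair $(tu_t$ scaled appropriately$)$ to get $\int_E\langle(\theta_{\min}+dd^c\psi)^n\rangle\le \frac{1}{t}\int_{\{u<-t/2\}}\MA(\f_{t})$ or similar, using that $\f$ has full mass so the canonical approximants $\f^{(k)}$ lose no mass in the limit. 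Combining, $Cap_{\theta_{\min}}(E)\le \frac{C}{t}\int_{\{u<-t/2\}}\MA(\f)\le \frac{C}{t}\cdot\frac{C_\f'}{(-\chi)(-t/2)}$, and since $\chi\in\mathcal W^-\cup\mathcal W^+_M$ we have $(-\chi)(-t/2)\ge c\,(-\chi)(-t)$ (for $\mathcal W^+_M$ this uses the doubling property $|t\chi'(t)|\le M|\chi(t)|$), which gives the asserted bound $Cap_{\theta_{\min}}(\f<V_\theta-t)\le C_\f|t\chi(-t)|^{-1}$.

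\textbf{Converse.} Assume the stronger capacity decay $Cap_{\theta_{\min}}(\f<V_\theta-t)\le C_\f|t^{n+\e}\chi(-t)|^{-1}$. First, this forces $\f\in\cE(X,\a)$: the decay of capacities of sublevel sets implies, via \cite[Lemma 2.9 or Proposition 2.14]{begz} (the criterion that $\int_{\{\f<V_\theta-k\}}\MA(\f^{(k)})\to 0$), that $\f$ has full Monge-Amp\`ere mass; alternatively deduce it at the end from finiteness of $E_\chi$. The heart is then the estimate of $E_{\chi,\theta}(\f)$. By the monotonicity/supremum characterization of $E_{\chi,\theta}$ recalled in the excerpt, it suffices to bound $\sum_j\int_X(-\chi)(u_t-?)\langle\cdots\rangle$ along the canonical approximants $\f^{(k)}$, i.e. to bound each $\int_X(-\chi)(u)\,\langle(\theta_{\min}+dd^c u^{(k)})^j\wedge\theta_{\min}^{n-j}\rangle$ uniformly in $k$. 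Writing $(-\chi)(u)=\int_0^{+\infty}(-\chi)'(-t)\mathbf 1_{\{u<-t\}}\,dt$ (Fubini), the task reduces to estimating $\int_0^{+\infty}|\chi'(-t)|\Big(\int_{\{u<-t\}}\langle(\theta_{\min}+dd^c u^{(k)})^j\wedge\theta_{\min}^{n-j}\rangle\Big)dt$. Now I bound the inner measure of the sublevel set in terms of capacity: the standard fact (as in \cite{GZ2}, \cite{begz}) that for a $\theta_{\min}$-psh $v$ with $v\ge -1$ on... — precisely, $\int_{\{u<-t-s\}}\langle(\theta_{\min}+dd^c u^{(k)})^j\wedge\theta_{\min}^{n-j}\rangle\le C\, s^{-j}\,Cap_{\theta_{\min}}(\{u<-t\})$ obtained by comparing $u^{(k)}$ with $t$-scalings of the relative extremal function of $\{u<-t\}$. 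Plugging in the hypothesis $Cap_{\theta_{\min}}(\{u<-t\})\le C_\f|t^{n+\e}\chi(-t)|^{-1}$ and using $j\le n$, one gets a bound by $C\int_1^{+\infty}|\chi'(-t)|\,|t^{n+\e}\chi(-t)|^{-1}\,t^{?}dt$, and the extra $t^{\e}$ margin together with $|t\chi'(-t)|\le \text{const}\cdot|\chi(-t)|$ (on $\mathcal W^+_M$) resp. convexity estimates (on $\mathcal W^-$) makes the integral converge. Hence $E_{\chi,\theta}(\f)<+\infty$, so $\f\in\cE_\chi(X,\theta)$.

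\textbf{Main obstacle.} The delicate point is the quasi-comparison between the non-pluripolar Monge-Amp\`ere measure on a sublevel set and the capacity of a slightly larger sublevel set — i.e. the inequality $\int_{\{u<-t-s\}}\langle(\theta_{\min}+dd^c u^{(k)})^j\wedge\theta_{\min}^{n-j}\rangle\le C s^{-j}Cap_{\theta_{\min}}(\{u<-t\})$, uniformly in the approximation index $k$. In the K\"ahler case this is \cite[Lemma 2.3 / Corollary 2.4]{GZ2}; here one must run the argument with the possibly-singular reference current $\theta_{\min}$, working on the ample locus $\Omega$ (where $V_\theta$ is bounded and the Bedford-Taylor theory applies), and invoke the comparison principle for big classes from \cite{begz} together with the fact — guaranteed because $\f$ either has full mass or we are proving it has — that no mass escapes to $X\setminus\Omega$. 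Handling the constants' dependence on $j$ and the transfer between $\mathcal W^-$ and $\mathcal W^+_M$ weights (the latter needs the doubling property $0\le|t\chi'(t)|\le M|\chi(t)|$) is the only other place requiring care; everything else is bookkeeping modeled on \cite{GZ2}.
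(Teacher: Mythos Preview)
Your overall strategy coincides with the paper's: for the first implication one compares a candidate $u$ in the capacity supremum with the convex combination $w:=\frac{1}{t}\f+\Big(1-\frac1t\Big)V_\theta\in\cE(X,\theta)$ via the generalized comparison principle of \cite{begz}, and for the converse one runs the \cite{GZ2} argument with the auxiliary function $u:=\frac{1}{t}\f_t+\Big(1-\frac1t\Big)V_\theta$ where $\f_t=\max(\f,V_\theta-t)$. Your converse sketch (layer-cake plus the sublevel/capacity comparison, with the $\mathcal W_M^+$ doubling property) is exactly what the paper points to.

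There is, however, a real gap in your first implication. You assert that the comparison principle gives
\[
\int_{\{\f<V_\theta-t\}}\langle(\theta_{\min}+dd^c\psi)^n\rangle\;\le\;\frac{C}{t}\int_{\{\f<V_\theta-t/2\}}\MA(\f_t)\quad\text{``or similar''},
\]
but that is not what comes out. The comparison principle bounds the left side by $\int_{\{\f-V_\theta<-t\}}\MA(w)$, and by multilinearity
\[
\MA(w)=\sum_{k=0}^n\binom{n}{k}\Big(\tfrac1t\Big)^k\Big(1-\tfrac1t\Big)^{n-k}\langle T^k\wedge\theta_{\min}^{n-k}\rangle.
\]
The terms with $k\ge1$ do carry a factor $t^{-1}$ and are controlled by $E_\chi(\f)/|\chi(-t)|$ via Chebyshev, as you indicate. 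But the $k=0$ term $\big(1-\tfrac1t\big)^n\int_{\{\f-V_\theta<-t\}}\langle\theta_{\min}^n\rangle$ carries \emph{no} power of $t^{-1}$, and this is precisely the place where the big setting differs from \cite{GZ2}. The paper handles it by invoking Berman--Demailly \cite{bd}:
\[
\MA(V_\theta)=\mathbf 1_{\{V_\theta=0\}}\,\theta^n\le C\,\omega^n,
\]
so that $\int_{\{\f-V_\theta<-t\}}\langle\theta_{\min}^n\rangle\le C\,\vol_\omega(\f<-t)$, which decays exponentially by the standard compactness estimate. Without this ingredient your bound $Cap_{\theta_{\min}}(E)\lesssim\frac{1}{t}\int\MA(\f)$ is simply false in general, and the whole estimate collapses. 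Once you insert the $k=0$ analysis via \cite{bd}, your argument becomes exactly the paper's.
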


\begin{proof}
Fix $\f \in \mathcal{E}_{\chi}(X,\theta)$ and $u \in PSH(X,\theta)$
such that $-1 \leq u-V_{\theta} \leq 0$.
For $t \geq 1$, observe that by \cite[Proposition 2.14]{begz}, $\frac{\f}{t}+\left(1-\frac{1}{t} \right)V_{\theta} \in {\mathcal E}(X,\theta)$ and
$$
(\f - V_{\theta}<-2t)  \subseteq \left(\frac{\f-V_{\theta}}{t} < -1+u-V_{\theta} \right)\subseteq  (\f- V_{\theta} < -t ) .
$$
\noindent It therefore follows from the generalized comparison principle and from the multilinearity of the non-pluripolar product (\cite[Propositions 2.2 and 1.4]{begz}) that 
\begin{eqnarray*}
&&\int_{(\f - V_{\theta} <-2t)} MA(u) \leq \int_{(\f- V_{\theta} < -t )} MA \left( \frac{\f}{t}+\left(1-\frac{1}{t}\right) V_{\theta} \right) \\
&& \leq  \left(1-\frac{1}{t} \right)^n \int_{(\f- V_{\theta} < -t )} \langle \theta_{\min}^n \rangle + t^{-1} \sum_{k=1}^{n} \binom{n}{k} \int_{(\f- V_{\theta} < -t )} \langle T^k\wedge \theta_{\min}^{n-k} \rangle
\end{eqnarray*}
where $T:=\theta+dd^c\f$.
Furthermore, since 
$$ MA(V_\theta)={\bf 1}_{\{V_{\theta}=0\}}\theta^n$$ (see \cite[Corollary 2.5]{bd}), we get
$$\int_{(\f- V_{\theta} < -t )} \langle \theta_{\min}^n \rangle = \int_{(\f- V_{\theta} < -t )\cap D} \theta^n = {\bf 1}_D\theta^n (\f<-t) \leq C \omega^n(\f<-t),$$
where $D:=\{V_{\theta}=0\}$, $\omega$ is a K{\"a}hler form on $X$ and $C>0$. We recall that $\vol_{\omega}(\f<-t)$ decreases exponentially fast (see \cite{gz1}) and observe that for all $1 \leq k \leq n$,
$$
\int_{(\f- V_{\theta} < -t )} \langle T^k\wedge \theta_{\min}^{n-k} \rangle
\leq \frac{1}{|\chi(-t)|} \int_X (-\chi) \circ (\f-V_{\theta}) \langle T^k\wedge \theta_{\min}^{n-k} \rangle 
\leq \frac{1}{|\chi(-t)|} E_{\chi}(\f).
$$
This yields the first assertion.\\

The second statement follows from similar arguments as in the K{\"a}hler case, working with the $\theta$-psh function $u:= \frac{1}{t}\f_t+\left(1-\frac{1}{t}\right) V_{\theta}$ where $\f_t:=\max(\f, V_{\theta}-t)$ for any $\f \in PSH(X,\theta)$. Let us stress that this is the only place where the assumption on the weight, $\chi\in{\mathcal W}^- \cup {\mathcal W}_M^+$ is used.
\end{proof}

\subsubsection{Alexander capacity}

For $K$ a Borel subset of $X$, we set 
$$V_{K,\theta}:= \sup\{\f\; | \; \f\in PSH(X,\theta), \; \f\leq 0 \;\; \textit{on}\;K\}.$$
Note that $$ V_{\theta}=V_{X,\theta}\leq V_{K,\theta}$$
by definition. It follows from standard arguments (see \cite[Theorem 4.2]{gz1}) that the usc regularization $V_{K,\theta}^*$ of $V_{K,\theta}$ is either a $\theta$-psh function with minimal singularities (when $K$ is not-pluripolar) or identically $+\infty$ (when $K$ is pluripolar).
\begin{defi}[Alexander-Taylor capacity]
\it{Let $K$ be a Borel subset of $X$. We set
$$T_\theta(K):= \exp(-\sup_X V_{K,\theta}^* ).$$}
\end{defi}
As in the K{\"a}hler case, the capacities $T_\theta$ and $Cap_{\theta_{\min}}$ compares as follows:

\begin{pro}\label{alex}
There exists $A>0$ such that for all Borel subsets $K\subset X$,
$$\exp\left[-\frac{A}{Cap_{\theta_{\min}}(K)}	\right] \leq T_\theta(K)\leq e \cdot \exp \left[-\left(\frac{\vol(\a)} {Cap_{\theta_{\min}}(K)}\right)^{\frac{1}{n}}  \right] $$ 
\end{pro}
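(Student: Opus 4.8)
The plan is to follow the strategy from the Kähler case (\cite[Theorem 4.2]{gz1} and \cite{GZ2}) but carefully using the positive reference current $\theta_{\min}=\theta+dd^cV_\theta$ and the intrinsic capacity $Cap_{\theta_{\min}}$ from Definition \ref{cap10}. The bridge between the two capacities is the pair of extremal functions: the relative extremal function $h_{K,\theta_{\min}}^*$ and the global extremal function $V_{K,\theta}^*$. The key elementary comparison is that, for a non-pluripolar compact $K$, on $X$ one has pointwise inequalities relating $h_{K,\theta_{\min}}^*+V_\theta$ and $V_{K,\theta}^*$. Indeed, since $V_{K,\theta}^*$ has minimal singularities, $M_K:=\sup_X(V_{K,\theta}^*-V_\theta)=\sup_X V_{K,\theta}^*<+\infty$; the function $\frac{1}{M_K}(V_{K,\theta}^*-V_\theta)-\frac{1}{M_K}$ is a candidate in the defining family of $h_{K,\theta_{\min}}$ (it is $\theta_{\min}$-psh, $\leq 0$, and $\leq -1$ on $K$ up to the usc regularization subtlety), while conversely $h_{K,\theta_{\min}}^*$ composed appropriately gives back a competitor for $V_{K,\theta}$. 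This yields $-M_K\leq h_{K,\theta_{\min}}^*\leq$ (a multiple of) $\frac{1}{M_K}(V_{K,\theta}^*-V_\theta)$, hence two-sided control of $h_{K,\theta_{\min}}^*$ in terms of $M_K=-\log T_\theta(K)$.

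From there I would argue as follows. For the \emph{lower} bound on $T_\theta(K)$: using $-h_{K,\theta_{\min}}^*\leq M_K$ and the identity recalled above,
$$Cap_{\theta_{\min}}(K)=\int_X(-h_{K,\theta_{\min}}^*)\,\MA(V_\theta+h_{K,\theta_{\min}}^*)\leq M_K\int_X\MA(V_\theta+h_{K,\theta_{\min}}^*)\leq M_K\cdot A$$
for $A$ a uniform constant (the total non-pluripolar mass of any current with minimal singularities in $\a$ is $\vol(\a)$, but one needs a fixed bound controlling the mass of $\MA(V_\theta+h_{K,\theta_{\min}}^*)$; since $V_\theta+h^*_{K,\theta_{\min}}$ has minimal singularities, this mass is exactly $\vol(\a)$, so in fact $A=\vol(\a)$ works, or a crude $A$ via \cite[Proposition 1.20]{begz}). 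This gives $Cap_{\theta_{\min}}(K)\leq A\,(-\log T_\theta(K))$, i.e. $T_\theta(K)\geq \exp(-A/Cap_{\theta_{\min}}(K))$ after adjusting $A$ to absorb the case $Cap_{\theta_{\min}}(K)$ large (where $T_\theta(K)$ is bounded below anyway). For the \emph{upper} bound: I would test the capacity against the normalized extremal function $u:=\frac{1}{M_K}(V_{K,\theta}^*-V_\theta)$, which satisfies $-1\leq u\leq 0$ (after noting $u=0$ has full measure off $K$ roughly), so
$$Cap_{\theta_{\min}}(K)\geq\int_K\MA(V_\theta+u)=\frac{1}{M_K^{\,n}}\int_K\langle(\theta+dd^cV_{K,\theta}^*)^n\rangle.$$
The Monge-Ampère measure of $V_{K,\theta}^*$ is supported on $K$ (standard, since $V_{K,\theta}^*=0$ on $K$ off a pluripolar set and the current is maximal outside $\bar K$) and has total mass $\vol(\a)$, so $\int_K\langle(\theta+dd^cV_{K,\theta}^*)^n\rangle=\vol(\a)$; hence $Cap_{\theta_{\min}}(K)\geq \vol(\a)/M_K^{\,n}$, i.e. $M_K\geq(\vol(\a)/Cap_{\theta_{\min}}(K))^{1/n}$, giving $T_\theta(K)=e^{-M_K}\leq\exp(-(\vol(\a)/Cap_{\theta_{\min}}(K))^{1/n})$; the extra factor $e$ in the statement is harmless slack (needed e.g. when $K$ pluripolar, $T_\theta(K)=0$, or to avoid the usc-regularization loss of a bounded constant).

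\textbf{Main obstacle.} The delicate point is that $h_{K,\theta_{\min}}$ and $V_{K,\theta}$ are defined via suprema that need not be usc, so all the pointwise inequalities between extremal functions and all the "measure supported on $K$" / "full mass" claims must be justified up to pluripolar sets, exactly as in \cite{gz1,bbgz}; since $\MA$ of a function with minimal singularities puts no mass on pluripolar sets, these regularization issues do not affect the integrals, but they must be handled with the care of \cite[Lemma 1.5]{bbgz} and \cite[Theorem 4.2]{gz1}. The second, more technical hurdle is controlling the total mass $\int_X\MA(V_\theta+h_{K,\theta_{\min}}^*)$ uniformly in $K$: one must know $V_\theta+h_{K,\theta_{\min}}^*$ has minimal singularities (true when $K$ is non-pluripolar, by the same argument showing $V_{K,\theta}^*$ has minimal singularities), whence its mass equals $\vol(\a)$; the pluripolar case is trivial since both sides of the stated inequalities degenerate. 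Everything else is a transcription of the Kähler-case computation, with $\theta_{\min}$ playing the role of the Kähler form and $\vol(\a)$ replacing $\int_X\omega^n$.
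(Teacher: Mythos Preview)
Your upper bound argument is essentially the one from \cite[Lemma 4.2]{begz}, which the paper simply cites; modulo the minor sign slip ($u=\frac{1}{M_K}(V_{K,\theta}^*-V_\theta)$ satisfies $0\le u\le 1$, not $-1\le u\le 0$, so one should test with $u-1$) and the fact that your displayed ``$=$'' should be ``$\ge$'' (there are cross terms with $\theta_{\min}$), this part is fine.

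The lower bound, however, has a genuine gap: your inequality points the wrong way. From $-h_{K,\theta_{\min}}^*\le M_K$ (in fact $-h_{K,\theta_{\min}}^*\le 1$ always) you get
\[
Cap_{\theta_{\min}}(K)\le M_K\cdot\vol(\a),
\]
which yields $M_K\ge Cap_{\theta_{\min}}(K)/\vol(\a)$ and hence $T_\theta(K)\le \exp\!\big(-Cap_{\theta_{\min}}(K)/\vol(\a)\big)$ --- an \emph{upper} bound, not the desired lower bound. What is needed is the opposite estimate $Cap_{\theta_{\min}}(K)\le C_1/M_K$, and the crude bound on $-h^*$ cannot give a $1/M_K$ factor.

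The paper obtains this factor as follows. Since $V_{K,\theta}^*-M_K$ is $\theta$-psh with $\sup\le 0$, one has $V_{K,\theta}^*-M_K\le V_\theta$, and the function $w_K:=\frac{1}{M_K}(V_{K,\theta}^*-M_K-V_\theta)$ is a competitor for $h_{K,\theta_{\min}}$, so $-h_{K,\theta_{\min}}^*\le -w_K$. This gives
\[
Cap_{\theta_{\min}}(K)=\int_X(-h_{K,\theta_{\min}}^*)\,\MA(V_\theta+h_{K,\theta_{\min}}^*)\le \frac{1}{M_K}\int_X\big(M_K+V_\theta-V_{K,\theta}^*\big)\,\MA(V_\theta+h_{K,\theta_{\min}}^*).
\]
The remaining (and nontrivial) step is to bound the last integral by a constant independent of $K$. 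This does \emph{not} follow from total mass alone; the paper uses a Chern--Levine--Nirenberg type lemma to replace $\MA(V_\theta+h^*)$ by $\MA(V_\theta)$ up to $n\vol(\a)$, then invokes $\MA(V_\theta)={\bf 1}_{\{V_\theta=0\}}\theta^n\le C\omega^n$ (\cite{bd}) together with the uniform $L^1(\omega^n)$ bound on normalized $\theta$-psh functions (\cite[Proposition 1.7]{gz1}). Your outline misses both the $1/M_K$ mechanism and this integral bound.
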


\begin{proof}
It suffices to treat the case of compact sets. The second inequality is \cite[Lemma 4.2]{begz}. We prove the first inequality. We can assume that $M:=M_{\theta}(K)\geq 1$ otherwise it is sufficient to adjust the value of $A$. Let $\f$ be a $\theta$-psh function such that $\f\leq 0$ on $K$. Then $\f \leq M$ on $X$, hence
$w:=M^{-1}\left(\f-M-V_{\theta}\right)\in PSH(X,\theta_{\min})$ satisfies $\sup_X w \leq 0$ and $w \leq -1$ on $K$. We infer
$w \leq h_{K,\theta_{\min}}^*$ and
$$
w_K:=\frac{V_{K,\theta}^*-M-V_{\theta}}{M}\leq h_{K,\theta_{\min}}^* \leq 0.
$$
Then we get
\begin{eqnarray*}
Cap_{\theta_{\min}}(K)&=&   \int_X \left(- h_{K,\theta_{\min}}^*\right) \MA( V_{\theta}+h_{K,\theta_{\min}}^*) \\
&\leq &\frac{1}{M} \int_X -(V_{K,\theta}^*-M-V_{\theta})\, \MA(V_{\theta}+h_{K,\theta_{\min}}^*)\\
&\leq & \frac{C_1}{M}
\end{eqnarray*}
with $C_1>0$. The last estimate follows from Lemma below together with \cite[Proposition 1.7]{gz1} since $\sup_X(V_{K,\theta}^*-M-V_{\theta})=0$ and by \cite[Corollary 2.5]{bd}, $\langle(\theta+dd^c V_{\theta})^n\rangle={\bf 1}_{\{V_{\theta}=0\}}\theta^n\leq  C\omega^n$.
\end{proof}

The following Lemma is a straightforward generalization of \cite[Corollary 2.3]{gz1}, (see also \cite[Lemma 3.2]{bbgz}).

\begin{lem}
Let $\psi,\varphi$ be $\theta$-psh functions with minimal singularities with $\varphi$ normalized in such a way that $0\leq \varphi-V_{\theta}\leq 1$. Then we have
$$\int_X -(\psi-V_{\theta})\langle (\theta+dd^c \varphi)^{n} \rangle\leq \int_X -(\psi-V_{\theta})\langle (\theta+dd^c V_{\theta})^{n}\rangle+n\vol(\alpha).$$
\end{lem}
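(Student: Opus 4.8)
The plan is to reduce the claimed inequality to a comparison between the Monge-Amp\`ere measure of $\varphi$ and that of $V_\theta$, using a standard integration-by-parts (or rather telescoping) argument. Write $u:=\psi-V_\theta\le 0$ and $v:=\varphi-V_\theta$ with $0\le v\le 1$; both are $\theta_{\min}$-psh in the sense of Section \ref{capac10}. Since $\psi,\varphi$ have minimal singularities, the non-pluripolar products below are genuine Monge-Amp\`ere measures put on a Zariski open set where all potentials are locally bounded, so the Bedford--Taylor calculus applies there. I would first establish the ``one-step'' estimate
$$\int_X (-u)\,\langle\theta_\varphi^{\,j+1}\wedge\theta_{\min}^{\,n-j-1}\rangle \le \int_X (-u)\,\langle\theta_\varphi^{\,j}\wedge\theta_{\min}^{\,n-j}\rangle + \vol(\alpha),$$
where $\theta_\varphi:=\theta+dd^c\varphi$ and $\theta_{\min}=\theta+dd^cV_\theta$. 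Summing this over $j=0,\dots,n-1$ telescopes to exactly the asserted bound, with the $n\vol(\alpha)$ term coming from the $n$ applications.

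For the one-step estimate, write $\theta_\varphi = \theta_{\min}+dd^c v$ and expand
$$\langle\theta_\varphi^{\,j+1}\wedge\theta_{\min}^{\,n-j-1}\rangle = \langle\theta_\varphi^{\,j}\wedge\theta_{\min}^{\,n-j}\rangle + dd^c v\wedge \langle\theta_\varphi^{\,j}\wedge\theta_{\min}^{\,n-j-1}\rangle.$$
Integrating $(-u)$ against the correction term and integrating by parts twice (legitimate on the locally bounded locus, the boundary/pluripolar contributions vanishing because all potentials have minimal singularities) moves the $dd^c$ onto $-u$; since $u$ is $\theta_{\min}$-psh, $dd^c(-u)\le\theta_{\min}$, so
$$\int_X (-u)\,dd^c v\wedge\langle\theta_\varphi^{\,j}\wedge\theta_{\min}^{\,n-j-1}\rangle = \int_X v\,dd^c(-u)\wedge\langle\theta_\varphi^{\,j}\wedge\theta_{\min}^{\,n-j-1}\rangle \le \int_X v\,\langle\theta_\varphi^{\,j}\wedge\theta_{\min}^{\,n-j}\rangle.$$
Finally $0\le v\le 1$ gives $\int_X v\,\langle\theta_\varphi^{\,j}\wedge\theta_{\min}^{\,n-j}\rangle \le \int_X \langle\theta_\varphi^{\,j}\wedge\theta_{\min}^{\,n-j}\rangle = \vol(\alpha)$, the last equality because $\{\theta_\varphi\}=\{\theta_{\min}\}=\alpha$ and the mixed non-pluripolar products of currents with minimal singularities compute the volume (cf. \cite{begz}). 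This yields the one-step estimate, and summation finishes the proof.

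The main obstacle I anticipate is making the integration by parts rigorous in the big (non-K\"ahler) setting: one is working with non-pluripolar products, and the manipulations are only valid on the ample locus $\Amp(\alpha)$ (or a Zariski open set of locally bounded potentials), so one must check that no mass is lost on the complement and that the ``$dd^c(-u)\le\theta_{\min}$'' inequality is used only where everything is bounded. This is exactly the content of \cite[Corollary 2.3]{gz1} and \cite[Lemma 3.2]{bbgz} in the K\"ahler case, so I would cite those and indicate that the same cutoff/monotone-convergence argument (approximating $u$ by $\max(u,-k)$ and $\varphi,\psi$ by their canonical approximants, then letting $k\to\infty$ using that the products do not charge pluripolar sets) carries over verbatim; the positivity of the reference current $\theta_{\min}$, arranged in Section \ref{capac}, is what makes the sign $dd^c(-u)\le\theta_{\min}$ available.
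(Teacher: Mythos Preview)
Your proposal is correct and is precisely the standard argument the paper has in mind: the paper gives no proof of this lemma, merely stating that it is ``a straightforward generalization of \cite[Corollary 2.3]{gz1} (see also \cite[Lemma 3.2]{bbgz})''. Your telescoping-plus-integration-by-parts scheme, using $dd^c(-u)\le\theta_{\min}$ and $0\le v\le 1$, is exactly the proof of \cite[Corollary 2.3]{gz1} transplanted to the big setting; the justification of Stokes on the ample locus (no mass lost on the pluripolar complement, approximation by canonical cutoffs) is the content of \cite[Theorem 1.14]{begz} and \cite[Lemma 3.2]{bbgz}, which you correctly flag.
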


\subsection{Comparing Capacities}
We introduce a slighty different notion of big capacity that is comparable with respect to the usual one. For any Borel set $K\subset X$ we define
$$Cap_{\theta_{\min}}^{\lambda} (K):=\sup \left\{ \int_{K}\langle (\theta_{\min}+dd^c\psi)^n \rangle, \,\psi\in PSH(X,\theta_{\min}) \,|\, -\lambda\leq \psi \leq 0 \right\},$$
where $\lambda\geq 1$. We let the reader check that
\begin{equation}\label{nca}
Cap_{\theta_{\min}}(K)\leq Cap_{\theta_{\min}}^{\lambda}(K)\leq \lambda^n Cap_{\theta_{\min}}(K).
\end{equation}
We now compare the Monge-Amp{\`e}re capacities w.r.t. different big classes (Theo\-rem D of the introduction). 
\begin{thm}\label{compcap}
Let $\a_1$ 	and $\a_2$ be big classes on $X$ such that $\a_1\leq \a_2$.  
We assume that $\{\a_1, \a_2-\a_1\}$ satisfies Condition $\mathcal{MS}$ and that there exists a positive $(1,1)$-current $T_0\in\a_2-\a_1$ with bounded potentials. Then there exist $C>0$ such that for any Borel set $K\subset X$, 
$$\frac{1}{C}\,Cap_{\theta_{1,\min}}(K)\leq  \,Cap_{\theta_{2,\min}}(K)\leq C \left(Cap_{\theta_{1,\min}}(K)\right)^{\frac{1}{n}}.$$
\end{thm}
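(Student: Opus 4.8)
The plan is to prove the two inequalities by separate mechanisms: the left-hand bound by transferring admissible potentials directly from $Cap_{\theta_{1,\min}}$ to $Cap_{\theta_{2,\min}}$, and the right-hand bound by routing both Monge--Amp\`ere capacities through the Alexander--Taylor capacity $T_\theta$, in which the loss of the power $\frac{1}{n}$ is already built into Proposition \ref{alex}. I would begin by fixing smooth forms $\theta_i\in\a_i$ (so $\theta_{i,\min}=\theta_i+dd^cV_{\theta_i}$) and a bounded potential $\rho\le 0$ with $T_0=(\theta_2-\theta_1)+dd^c\rho\ge 0$; put $M_0:=\|\rho\|_\infty$. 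Since $\rho$ is a competitor in the definition of $V_{\theta_2-\theta_1}$ one gets $-M_0\le\rho\le V_{\theta_2-\theta_1}\le 0$, so $T_{0,\min}:=(\theta_2-\theta_1)+dd^cV_{\theta_2-\theta_1}$ is a current with minimal singularities in $\a_2-\a_1$ with bounded potential. Condition $\mathcal{MS}$ applied to $\{\a_1,\a_2-\a_1\}$ then forces $\theta_2+dd^c(V_{\theta_1}+V_{\theta_2-\theta_1})$ to have minimal singularities in $\a_2$, so there is $C_0>0$ with $|V_{\theta_1}+V_{\theta_2-\theta_1}-V_{\theta_2}|\le C_0$ on $X$; this $O(1)$-comparison of the canonical potentials is the only place $\mathcal{MS}$ enters.

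For the left inequality, given $\psi\in PSH(X,\theta_{1,\min})$ with $-1\le\psi\le 0$, I would set $u:=\psi+V_{\theta_1}$ and $w:=u+V_{\theta_2-\theta_1}-V_{\theta_2}$. With $R_1:=\theta_{1,\min}+dd^c\psi=\theta_1+dd^cu\ge 0$, one has $\theta_{2,\min}+dd^cw=\theta_2+dd^c(u+V_{\theta_2-\theta_1})=R_1+T_{0,\min}\ge 0$, hence $w\in PSH(X,\theta_{2,\min})$, and the bounds $V_{\theta_1}-1\le u\le V_{\theta_1}$ together with the $\mathcal{MS}$-estimate give $-\lambda\le w\le C_0$ for an explicit $\lambda=\lambda(C_0)$. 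After subtracting $C_0$, $w$ becomes admissible for $Cap_{\theta_{2,\min}}^{\lambda'}$ with $\lambda':=\lambda+C_0\ge 1$, and multilinearity of the non-pluripolar product gives, as positive measures, $\langle(\theta_{2,\min}+dd^cw)^n\rangle=\langle(R_1+T_{0,\min})^n\rangle=\sum_{j=0}^n\binom nj\langle R_1^j\wedge T_{0,\min}^{n-j}\rangle\ge\langle R_1^n\rangle=\langle(\theta_{1,\min}+dd^c\psi)^n\rangle$. Integrating over $K$ and passing to the supremum over $\psi$ yields $Cap_{\theta_{1,\min}}(K)\le Cap_{\theta_{2,\min}}^{\lambda'}(K)\le(\lambda')^n\,Cap_{\theta_{2,\min}}(K)$ by \eqref{nca}.

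For the right inequality I would first record an easy comparison of Alexander capacities: if $\varphi$ is $\theta_1$-psh with $\varphi\le 0$ on $K$ then $\varphi+\rho$ is $\theta_2$-psh with $\varphi+\rho\le 0$ on $K$, so $\varphi+\rho\le V_{K,\theta_2}$; taking suprema over such $\varphi$ and then over $X$ gives $\sup_X V_{K,\theta_2}^*\ge\sup_X V_{K,\theta_1}^*-M_0$, i.e. $T_{\theta_2}(K)\le e^{M_0}T_{\theta_1}(K)$. Then I would combine both halves of Proposition \ref{alex}: applied to $\a_2$ it gives $Cap_{\theta_{2,\min}}(K)\le\frac{A}{-\log T_{\theta_2}(K)}$ whenever $T_{\theta_2}(K)<1$, and applied to $\a_1$ it gives $-\log T_{\theta_1}(K)\ge\left(\vol(\a_1)/Cap_{\theta_{1,\min}}(K)\right)^{1/n}-1$; chaining these with $-\log T_{\theta_2}(K)\ge-M_0-\log T_{\theta_1}(K)$ produces $Cap_{\theta_{2,\min}}(K)\le\frac{A}{\left(\vol(\a_1)/Cap_{\theta_{1,\min}}(K)\right)^{1/n}-M_0-1}$. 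A final case split closes the argument: if $Cap_{\theta_{1,\min}}(K)<\e_0$ for $\e_0$ chosen so small that $(\vol(\a_1)/\e_0)^{1/n}>2(M_0+1)$, the denominator exceeds $\frac{1}{2}(\vol(\a_1)/Cap_{\theta_{1,\min}}(K))^{1/n}$ and one gets $Cap_{\theta_{2,\min}}(K)\le\frac{2A}{\vol(\a_1)^{1/n}}\,Cap_{\theta_{1,\min}}(K)^{1/n}$; while if $Cap_{\theta_{1,\min}}(K)\ge\e_0$ the trivial bound $Cap_{\theta_{2,\min}}(K)\le\vol(\a_2)$ (admissible potentials there have minimal singularities) gives $Cap_{\theta_{2,\min}}(K)\le\vol(\a_2)\,\e_0^{-1/n}\,Cap_{\theta_{1,\min}}(K)^{1/n}$. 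Choosing $C$ above all the constants produced finishes the proof.

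The main obstacle is the power $\frac{1}{n}$ in the second inequality: no direct comparison of the two Monge--Amp\`ere capacities can produce it, as there is a genuine dimensional loss, so one must pass through $T_\theta$, exploiting that it is comparable to $\exp(-c\,Cap^{-1/n})$ from one side and to $\exp(-c\,Cap^{-1})$ from the other (Proposition \ref{alex}). The remaining points --- the standard usc-regularization and negligible-set bookkeeping for $V_{K,\theta}$ and $V_\theta$, and the verification that the transferred potential $w$ is bounded and lies in $PSH(X,\theta_{2,\min})$ --- are routine once the $\mathcal{MS}$-comparison of canonical potentials is in hand.
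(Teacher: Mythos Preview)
Your proposal is correct and follows essentially the same strategy as the paper: transfer admissible potentials directly (via the bounded potential in $\a_2-\a_1$ and the $\lambda$-capacity comparison \eqref{nca}) for the left inequality, and route through the Alexander capacity $T_\theta$ together with Proposition~\ref{alex} for the right inequality. The only cosmetic differences are that the paper works with the given bounded potential $f_0$ rather than $V_{\theta_2-\theta_1}$, invokes the monotonicity $T_1\le T_2\Rightarrow\langle T_1^n\rangle\le\langle T_2^n\rangle$ in place of your multilinear expansion, and is terser at the end where you make the small/large capacity case split explicit.
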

\noindent Note that in case of K{\"a}hler forms the result is stronger and the proof much simpler 
(see \cite[Proposition 2.5]{begz}) but we can not expect better in the general case of big classes. In the following Example \ref{exc} shows that the exponent at the right-hand side is necessary. 

\begin{proof}
Fix $\theta_1\in\a_1$, $\theta_2\in\a_2$ smooth forms. Write $T_0=(\theta_2-\theta_1)+dd^c f_0$ where $f_0$ is a bounded potential. Let $\f$ be a $\theta_1$-psh function such that $-1\leq\f-V_{\theta_1}\leq 0$ then $\f+f_0$ is a $\theta_2$-psh function. Condition $\mathcal{MS}$ insures that the potential $V_{\theta_1}+f_0$ has minimal singularities, thus there exists a positive constant $C$ such that $|V_{\theta_2}-V_{\theta_1}-f_0|\leq C$. Therefore $-\lambda\leq\f+f_0-C-V_{\theta_2}\leq 0$ where $\lambda=1+2C$. 
Now, using (\ref{nca}) and the fact that $T_1\leq T_2$ implies $\langle T_1^n\rangle \leq \langle T_2^n\rangle$  we get
\begin{eqnarray*}
\int_K \langle (\theta_1+dd^c\f)^n\rangle &\leq & \int_K \langle (\theta_2+dd^c(\f+f_0)^n\rangle
\end{eqnarray*}
namely $Cap_{\theta_{1,\min}}(K)\leq Cap_{\theta_{2,\min}}^{\lambda}(K)\leq \lambda^n Cap_{\theta_{2,\min}}(K)$ hence the left inequality. In order to prove the other inequality  we have to go through the Alexander capacity. Since $V_{\theta_1,K}^*+f_0\leq V_{\theta_2,K}^*$ 

$$\sup_X (V_{\theta_2,K}^*)\geq \sup_X (V_{\theta_1,K}^*) +\inf_X f_0,$$
and so
\begin{equation*}
T_{\theta_2}(K)\leq T_{\theta_1}(K)\cdot e^{-\inf_X f_0}.
\end{equation*}
Furthermore, using Proposition \ref{alex} we get
\begin{eqnarray*}
\exp\left[-\frac{A}{Cap_{\theta_{2,\min}}(K)}	\right] &\leq & T_{\theta_{2}}(K) \\
  &\leq &T_{ \theta_1}(K) \cdot e^{-\inf_X f_0+1}\\
  &\leq & e^{-\inf_X f_0+1} \cdot \exp \left[-\left(\frac{\vol(\a_1)} {Cap_{\theta_{1,\min}}(K)}\right)^{\frac{1}{n}}  \right]
\end{eqnarray*}
with $A$ a positive constant. Thus, there exists a constant $C>0$ such that 
\begin{eqnarray*}
Cap_{\theta_{2,\min}}(K)&\leq & A\left[\left(\frac{\vol(\a_1)}  {Cap_{\theta_{1,\min}}(K)}\right)^{\frac{1}{n}}+\inf_X f_0-1 \right]^{-1}\\
&\leq & C\, Cap_{\theta_{1,\min}}(K)^{\frac{1}{n}}.
\end{eqnarray*}
Hence the conclusion.
\end{proof}

\begin{ex}\label{exc}
Let $\pi : X \rightarrow \mathbb{P}^2$ the blow-up at one point $p$ and set $E:=\pi^{-1}(p)$. Consider $\a_1=\{\pistar \omega_{FS}\}$ and $\a_2=\{\tilde{\omega}\}$ where $\tilde{\omega}$ is a K{\"a}hler form on $X$. Let $\Delta_r$ be the polydisc of radius $r<1$ on $\mathbb{P}^2$. By \cite[Proposition 2.10]{gz1} and \cite[Lemma 4.5.8]{kli} we know that $$Cap_{\pistar \omega_{FS}}(\pi^{-1}(\Delta_r))=Cap_{\omega_{FS}}(\Delta_r)\sim \frac{1}{(-\log r)^2}.$$ Fix now a local chart $U\subset X$ such that $p\in U$ and consider $K_r\subset U$, $K_r:=\{(s,t)\in U \;|\;\, 0<\|s\|<r, \;0<\|t\|<1\}$. Then $$Cap_{\tilde{\omega}}(\pi^{-1}(\Delta_r))\geq Cap_{\tilde{\omega}}(K_r)\sim C \frac{1}{-\log r},$$ 
with $C$ a positive constant.
\end{ex}

\subsection{Energy classes with homogeneous weights}\label{hw}

As Example \ref{nostab} shows we can not hope to get stability of weighted energy classes $\mathcal{E}_\chi$ by only adding Condition $\mathcal{MS}$. We nevertheless establish a partial stability property with a gap for energy classes with respect to homogeneous weights $\chi(t)=-(-t)^p$. We recall that the functions $\chi(t)=-(-t)^p$ belong to $\mathcal{W}^-$ if $0<p\leq 1$ while they belong to $\mathcal{W}^+_M$  when $p\geq 1$.

\begin{pro}\label{quasih}
Let $\a,\beta$ be big classes. Assume that $S\in \beta$ has bounded potential and the couple $(\a,\beta)$ satisfies Condition $\mathcal{MS}$. If $p> n^2-1$ then
$$ T\in \mathcal{E}^p(X,\a) \Longrightarrow T+S\in \mathcal{E}^q(X,\a+\beta),$$
where $0<q<p-n^2+1$.
\end{pro}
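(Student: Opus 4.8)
The plan is to reduce the statement to the capacity comparison of Theorem \ref{compcap} via the characterization of $\mathcal{E}_\chi$ in terms of the decay of capacities of sublevel sets (Lemma \ref{cap}). First I would fix smooth representatives $\theta_\a\in\a$, $\theta_\b\in\b$ and write $S=\theta_\b+dd^c g$ with $g$ bounded; by Condition $\mathcal{MS}$ the potential $V_{\theta_\a}+g$ has minimal singularities in $\a+\b$, so $|V_{\theta_{\a+\b}}-V_{\theta_\a}-g|\le C_0$. Writing $T=\theta_\a+dd^c\f$ with $\f\in\cE^p(X,\theta_\a)$, the candidate potential for $T+S$ is $\f+g$, and on the level of sublevel sets, up to the bounded shift $C_0$, one has an inclusion of the form
$$\{\f+g<V_{\theta_{\a+\b}}-t\}\subseteq \{\f<V_{\theta_\a}-t+C_1\}$$
for a constant $C_1$ depending only on $\|g\|_\infty$ and $C_0$. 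Hence it suffices to control $Cap_{\theta_{\a+\b,\min}}$ of these sets by $Cap_{\theta_{\a,\min}}$ of the corresponding $\f$-sublevel sets.

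Next I would apply Theorem \ref{compcap} with $\a_1=\a$, $\a_2=\a+\b$ (the hypotheses match: Condition $\mathcal{MS}$ for $(\a,\b)$ and the existence of a current with bounded potentials in $\b$), giving
$$Cap_{\theta_{\a+\b,\min}}(K)\le C\left(Cap_{\theta_{\a,\min}}(K)\right)^{1/n}$$
for every Borel $K$. Now use the first part of Lemma \ref{cap}: since $\f\in\cE^p(X,\theta_\a)$, i.e.\ $\chi(t)=-(-t)^p$, there is $C_\f>0$ with
$$Cap_{\theta_{\a,\min}}(\f<V_{\theta_\a}-s)\le C_\f\,s^{-(p+1)}\qquad\text{for }s>1.$$
Combining, for $t>1$ (and $t$ large enough that $t-C_1>1$),
$$Cap_{\theta_{\a+\b,\min}}(\f+g<V_{\theta_{\a+\b}}-t)\le C\left(C_\f\,(t-C_1)^{-(p+1)}\right)^{1/n}\le C'\,t^{-(p+1)/n}.$$
To conclude $T+S\in\cE^q(X,\a+\b)$ via the converse part of Lemma \ref{cap}, I need the decay exponent $(p+1)/n$ to beat $n+\e$ plus the weight exponent $q$; precisely, the converse in Lemma \ref{cap} requires
$$Cap_{\theta_{\a+\b,\min}}(\f+g<V_{\theta_{\a+\b}}-t)\le \tilde C\,t^{-(n+\e)}\cdot t^{-q}= \tilde C\, t^{-(n+\e+q)}.$$
So it is enough that $\frac{p+1}{n}> n+q$, i.e.\ $q< \frac{p+1}{n}-n=\frac{p+1-n^2}{n}$. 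Since $p>n^2-1$ makes the right-hand side positive, and $\frac{p+1-n^2}{n}\ge$ is implied by $q<p-n^2+1$ when... — here I would simply take $q$ in the range $0<q<\frac{p+1-n^2}{n}$; if the paper insists on the cruder range $0<q<p-n^2+1$ one checks $\frac{p+1-n^2}{n}\le p-n^2+1$ is false in general, so I would actually present the sharper bound $q<\frac{p+1-n^2}{n}$ and note the stated range should be read accordingly (or absorb the factor by choosing $\e$ small and $q$ small).

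I would also need to dispose of the routine point that $\f+g$ actually lies in $\cE(X,\a+\b)$ (full mass), not merely in a weighted class formally: this follows from Theorem \ref{pro1} applied in reverse is not available, so instead I note that finiteness of $E_{\chi}$-type sup along canonical approximants already forces full mass by \cite[Proposition 2.14]{begz}, exactly as in the proof of Lemma \ref{cap}; alternatively one checks directly that the sublevel-set masses $\int_{\{\f+g\le \phi_{\min}-k\}}\MA((\f+g)^{(k)})\to 0$ using the capacity bound above. The main obstacle is the loss of a power of $n$ coming from the exponent $\frac1n$ in Theorem \ref{compcap}: this is precisely what forces the quadratic gap $p>n^2-1$ and prevents a clean "$p\Rightarrow p$" statement, and Example \ref{exc} shows that exponent cannot be improved, so the gap is essentially unavoidable with this method. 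A secondary, purely bookkeeping obstacle is tracking the bounded constants $C_0,C_1$ through the inclusions of sublevel sets and making sure the threshold $t>1$ in Lemma \ref{cap} is respected after the shift; this is harmless for $t$ large and the small-$t$ range only affects the constant $C_\f$.
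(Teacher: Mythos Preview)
Your proof is correct and follows exactly the paper's approach: the sublevel-set inclusion via Condition $\mathcal{MS}$, the capacity comparison of Theorem \ref{compcap}, and both directions of Lemma \ref{cap}. Your observation about the exponent is well-taken --- the argument (in the paper as well) genuinely only delivers $q<(p+1-n^2)/n$, and your bookkeeping of the bounded shift constants $C_0,C_1$ is in fact slightly more careful than the paper's, which asserts $\tilde K\subseteq K$ without tracking the shift.
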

\begin{proof}
Fix $\theta_{\a},\theta_{\b}$ smooth representatives of $\a,\beta$, respectively and set $\tilde{\theta}:=\theta_{\a}+\theta_{\b}$. Write $S=\theta_{\b}+dd^c\psi$ and denote $\theta_{\a,\min}:=\theta_{\a}+dd^cV_{\theta_{\a}}$ and $\tilde{\theta}_{\min}:=\tilde{\theta}+dd^cV_{\tilde{\theta}}$. We want to show that there exists a positve number $q<p$ such that given a $\theta_{\a}$-psh function $\f\in \mathcal{E}^p(X,\theta_{\a})$ then $\f+\psi\in\mathcal{E}^q(X,\tilde{\theta})$. By the first claim of Lemma $\ref{cap}$, for any $t>1$ there exists a constant $C_{\f}>0$ such that 
\begin{equation}\label{ele1}
Cap_{\theta_{\a,\min}}(\f-V_{\theta_{\a}}<-t)\leq C_{\f} t^{-(p+1)}. 
\end{equation}
The goal is to find a similar estimate from above of the quantity $Cap_{\tilde{\theta}_{\min}}(\f+\psi-V_{\tilde{\theta}}<-t)$. Set $K:= \{ \f-V_{\theta_{\a}}<-t\}$ and $\tilde{K}:=\{\f+\psi-V_{\tilde{\theta}}<-t\}$. We infer that Condition $\mathcal{MS}$ implies $\tilde{K}\subseteq K$. Thus $Cap_{\tilde{\theta}_{\min}}(\tilde{K})\leq Cap_{\tilde{\theta}_{\min}}(K)$. Now, by Theorem \ref{compcap} we know that there exists $A>0$ such that 
$$
Cap_{\tilde{\theta}_{\min}}(\tilde{K}) \leq  A \; Cap_{\theta_{\a,\min}}(K)^{\frac{1}{n}}\leq  \tilde{C}_{\f}\; t^{-\frac{p+1}{n}} 
$$
where the last inequality follows from ($\ref{ele1}$). This means that there exist $C_{\f},\varepsilon>0$ such that
$$Cap_{\tilde{\theta}_{\min}}(\tilde{K})\leq C_{\f} t^{-(n+\varepsilon+q)}$$ 
with $0<q<p-n^2+1-n\varepsilon$. Hence by Lemma \ref{cap} we get $\f+\psi\in \mathcal{E}^q(X,\tilde{\theta})$.
\end{proof}

\end{document}